\newtheorem{theorem}{Theorem}[section]
\newtheorem*{theorem*}{Theorem}
\newtheorem{proposition}[theorem]{Proposition}
\newtheorem{remark}[theorem]{Remark}
\newtheorem{lemma}[theorem]{Lemma}
\newtheorem{corollary}[theorem]{Corollary}
\newtheorem{definition}[theorem]{Definition}
\numberwithin{equation}{section}
\newcommand{\be}{\begin{equation}}
\newcommand{\ee}{\end{equation}}
\newcommand{\bp}{\begin{proof}}
\newcommand{\ep}{\end{proof}}
\newcommand{\bel}{\begin{equation}\label}
\newcommand{\eeq}{\end{equation}}
\begin{document}
\title[Fractional Sch\"odinger equation]{On the  fractional Sch\"odinger equation with variable coefficients}
\author{C. E. Kenig}
\address[C. E. Kenig]{Department of Mathematics\\University of Chicago\\Chicago, Il. 60637 \\USA.}
\email{cek@math.uchicago.edu}
\author{D. Pilod}
\address[D. Pilod]{Department of Mathematics\\University of Berger\\Postbox 7800\\ 5020 Berger\\Norway.}
\email{Didier.Pilod@uib.no}

\author{G. Ponce}
\address[G. Ponce]{Department  of Mathematics\\
University of California\\
Santa Barbara, CA 93106\\
USA.}
\email{ponce@math.ucsb.edu}

\author{L. Vega}
\address[L. Vega]{UPV/EHU\\Dpto. de Matem\'aticas\\Apto. 644, 48080 Bilbao, Spain, and Basque Center for Applied Mathematics,
E-48009 Bilbao, Spain.}
\email{luis.vega@ehu.es}

\keywords{Nonlinear dispersive equation,  non-local operators }
\subjclass{Primary: 35Q55. Secondary: 35B05}

\begin{abstract} 
We study the initial value problem (IVP) associated to the semi-linear   fractional Sch\"odinger equation with variable coefficients.
We deduce several properties of  the anisotropic fractional elliptic operator modelling the dispersion relation  and use them to establish the local  well-posedness for the corresponding IVP. Also, we obtain  unique continuation results concerning the solutions of this problem. These are  consequences of uniqueness properties that we prove for the fractional elliptic operator with variable coefficients. \end{abstract}

\maketitle

\section{Introduction}

This work is mainly concerned with the  initial value problem (IVP) associated to  an  anisotropic fractional Schr\"odinger equation, i.e.
 \begin{equation}
 \label {fNLS}
\begin{cases}
 \begin{aligned}
 & i\partial_t u +(L(x))^{\alpha}u+P(u,\bar{u})=0,\;\;\;\;t\in\mathbb R,\;x\in \mathbb R^n,\\
 &u(x,0)=u_0(x)
 \end{aligned}
 \end{cases}
 \end{equation}
where $\alpha >0$,
\begin{equation}
\label{L}
L(x)v=Lv=-\partial_{x_k}(a_{jk}(x)\partial_{x_j}v)+c(x)v,
\end{equation}
is a non-negative elliptic operator,   and $P(z,\bar{z})$ is a polynomial or a regular enough function with $P(0,0)=\partial_zP(0,0)=\partial_{\overline {z}}P(0,0)=0$.

In the case where the operator modeling the dispersion relation  $L=L(x)$ has constant coefficients, its fractional powers \eqref{fNLS} have been considered  in several  physical contexts, for example :

- when $L=-\Delta$, $0< \alpha <1$, and $P=0$, \eqref{fNLS}  was used in \cite{La} to describe particles in a class of  Levi stochastic processes,

- when $L=-\Delta+1$, $\alpha=1/2$, and $P=0$, \eqref{fNLS} was derived as a generalized semi-relativistic (Schr\"odinger) equation, see \cite{Le} and references therein,

-  when $L=-\Delta$, $\alpha=1/2$ and $P(u,\bar{u})=\pm|u|^au,\,a>0,$ the model in \eqref{fNLS} is known as the half-wave equation, see \cite{BGLV}, \cite{GeLePoRa},

-  when $L=-\Delta$,  $\alpha=1/2$, and 
$$P(z,\bar z)=c_0|z|^2z+c_1z^3+c_3z{\bar z}^2+c_3{\bar z}^3,\;\;\,c_0\in\mathbb R,\;c_1,c_2, c_3\in \mathbb C, $$
 it appears in \cite{IoPu} on the analysis  of the long-time behavior of solutions to the water wave equation in $\mathbb R^2$, with $(-\partial_x^2)^{1/2}$ 
 modelling the dispersion relation of the linearized gravity water waves equations for one-dimensional interfaces,
 
 - when $L=-\Delta+1$ and $\alpha=1/2$, \eqref{fNLS} emerges  in the examination of  gravitational collapse, see \cite{ElSc}, \cite{FrLe} and references therein.
 \vskip.1in

 When the operator $L(x)=L  $ has constant coefficients, i.e. $L=-\Delta+c,\,c\geq 0$, the well-posedness of the IVP \eqref{fNLS} has been studied  in several publications, see for example \cite{CHHO}, \cite{HoSi}, \cite{KLR}, \cite{Le}, \cite{BoRi} and references therein.
 
Under appropriate assumptions on the ellipticity, regularity and the asymptotic behavior at infinity on the coefficients $a_{jk}(x)$'s and $c(x)$,  we shall prove that the IVP \eqref{fNLS} is locally well-posed in the classical Sobolev spaces $H^s(\mathbb R^n)=(1-\Delta)^{s/2}$ with $s$ depending on the assumptions on $a_{jk}(x)$'s,  $\alpha$ and $n$. More precisely, we shall assume :
\begin{equation}
\label{hyp1}
a_{jk}(x)=a_{kj}(x),\;\;\;\;j,k=1,...,n,\;\;x\in\mathbb R^n,
\end{equation}
$\,\exists\, \lambda>0\,$ such that $\forall x\in\mathbb R^n$
\begin{equation}
\label{hyp2}
a_{jk}(x)\xi_j\xi_k\geq \lambda \|\xi\|^2,\;\;\;\;\forall \xi\in\mathbb R^n,
\end{equation}
and
\begin{equation}
\label{hyp3}
c(x) \geq 0.
\end{equation}

Concerning the regularity of the coefficients $a_{jk}(\cdot)$'s and $c(\cdot)$ we define the hypothesis $J$  with $J\in \mathbb Z^+$
\begin{equation}
\label{J}
(J)\;\;\;
\begin{aligned}
\begin{cases}
&\;a_{jk}\in C^{J}(\mathbb R^n)\cap W^{J,\infty}(\mathbb R^n),\;\;\;j,k=1,...,n,\\
&\;c\in W^{J-1,\infty}(\mathbb R^n),
\end{cases}
\end{aligned}
\end{equation}
where
$$
W^{J,\infty}(\mathbb R^n)=\{f:\mathbb R^n\to\mathbb R\,:\,\partial_x^{\beta}f\in L^{\infty}(\mathbb R^n),\;\;|\beta|\leq J\}.
$$
Also, we need  the asymptotic assumption (flatness at infinity)
\begin{equation}
\label{asymp}
\lim_{R\to \infty}\;\sup_{\|x\|\geq R}\,\sum_{j,k=1}^n\;\| a_{jk}(x)-\delta_{jk}\|=0,
\end{equation}
where $\delta_{jk}$ is the Kronecker symbol.

 We shall assume that the non-linearity $P$ in \eqref{fNLS} satisfies: 
 \begin{equation}
 \label{hyp-P}
 P(z,\overline{z})=\sum_{N_1\leq |\beta|+|\gamma| \leq N_2} \,d_{\beta,\gamma}\,z^{\beta}\,\overline{z}^{\gamma},\;\;\;N_1\geq 2.
 \end{equation}
 
 \begin{theorem}\label{A1}

Let $\alpha \in (0,1)$. Assume \eqref{hyp1}-\eqref{hyp3}, \eqref{asymp}-\eqref{hyp-P}, and \eqref{J} where $J\in \mathbb N$,
$J\geq 2\kappa-1$, for some $\kappa\in \mathbb N$ with $2\kappa>n/2$. Then for any  $u_0\in H^s(\mathbb R^n)$, $s\in (n/2,2\kappa]$,  there exist $T=T(\|u_0\|_{s,2};n; N_1;N_2)>0$ and a unique strong solution $u=u(x,t)$ of the IVP \eqref{fNLS} such that
 $$
 u\in C([-T,T]:H^{s}(\mathbb R^n)\cap C^1([-T,T]:H^{s-2\alpha}(\mathbb R^n))\equiv X_{s}(T).
 $$
 Moreover the  (locally defined) map data $\to$ solution  is analytic from $H^{s}(\mathbb R^n)$ to $X_{s}(T)$.
 \end{theorem}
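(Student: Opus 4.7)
The strategy is to recast \eqref{fNLS} via Duhamel's formula and solve by Picard iteration in $C([-T,T];H^s(\mathbb R^n))$, treating the variable-coefficient dispersion as a unitary group through the functional calculus of $L$. Under \eqref{hyp1}--\eqref{hyp3} the operator $L$ is non-negative and self-adjoint on $L^2(\mathbb R^n)$, so the spectral theorem gives the self-adjoint power $L^\alpha$ and a strongly continuous unitary group $\{e^{itL^\alpha}\}_{t\in\mathbb R}$ on $L^2$. I rely on two ingredients already prepared earlier in the paper: (i) the norm equivalence $\|u\|_{H^s}\sim\|u\|_{L^2}+\|(1+L)^{s/2}u\|_{L^2}$ for $s\in[0,2\kappa]$, which is precisely where hypothesis \eqref{J} with $J\geq 2\kappa-1$ and the flatness \eqref{asymp} enter; and (ii) the mapping property $L^\alpha\colon H^s\to H^{s-2\alpha}$. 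Since $(1+L)^{s/2}$ commutes with $e^{itL^\alpha}$, (i) upgrades the $L^2$-unitarity to $\|e^{itL^\alpha}u\|_{H^s}\lesssim \|u\|_{H^s}$ uniformly in $t$, for $s\in[0,2\kappa]$.

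With the uniform bound on the group in place, I set up the fixed-point map
$$\Phi(u)(t)=e^{itL^\alpha}u_0+i\int_0^t e^{i(t-s)L^\alpha}P(u(s),\bar u(s))\,ds$$
on the closed ball $B_R\subset C([-T,T];H^s)$ of radius $R=2\|u_0\|_{s,2}$. Because $s>n/2$, $H^s(\mathbb R^n)$ is a Banach algebra, and the polynomial form \eqref{hyp-P} yields
$$\|P(u,\bar u)\|_{H^s}\leq C(R^{N_1}+R^{N_2}),\quad \|P(u,\bar u)-P(v,\bar v)\|_{H^s}\leq C(R^{N_1-1}+R^{N_2-1})\|u-v\|_{H^s}.$$
Combined with the boundedness of the group on $H^s$, choosing $T=T(\|u_0\|_{s,2};n,N_1,N_2)$ small makes $\Phi\colon B_R\to B_R$ a contraction with unique fixed point $u\in C([-T,T];H^s)$. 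Substituting into \eqref{fNLS} and using (ii), $\partial_t u\in C([-T,T];H^{s-2\alpha})$, so $u\in X_s(T)$. Uniqueness within $X_s(T)$ follows from the same contraction, and the analytic dependence on $u_0$ is automatic: $\Phi$ is a polynomial map in $u$ with initial term depending linearly on $u_0$, so the fixed point is an analytic function of $u_0$ via the implicit function theorem in the analytic category.

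The heart of the matter is ingredient (i), the norm equivalence up to exponent $s=2\kappa$: for integer $s=2m\leq 2\kappa$ it reduces to iterated elliptic regularity for $L$, which is exactly what the coefficient regularity $J\geq 2\kappa-1$ makes possible, while intermediate non-integer values $s\in(n/2,2\kappa]$ are recovered by complex interpolation between the integer endpoints; the asymptotic flatness \eqref{asymp} ensures that the standard $H^s$-scale of $(1-\Delta)$ and the intrinsic scale of $L$ actually coincide and are not merely locally comparable. Once (i) and (ii) are in hand from the preliminary sections, the Picard argument above is the routine conclusion.
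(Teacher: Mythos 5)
Your proposal follows essentially the same route as the paper: Duhamel formulation, boundedness of $e^{itL^\alpha}$ on $H^s$ via the spectral-theoretic norm equivalence from Theorem \ref{2.1} and Corollary \ref{abcd}, Kato--Ponce/algebra estimates on the nonlinearity, contraction in a ball of $C([-T,T];H^s)$, and analyticity of the data-to-solution map by the implicit function theorem. The details match the paper's argument, so this is correct.
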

 
 \begin{remark}
 \vskip.1in
\begin{enumerate}


\item It will be clear from our proofs in Section 2 and Section 3 how to extend the results in Theorem \ref{A1} to all $\alpha>0$.

 \item  If the hypothesis of Theorem \ref{A1} are satisfied for  $s$ and $s'$ with  $s'>s$, then  the results hold in the same time interval $[-T,T]$ with $T=T(\|u_0\|_{s,2};n; N_1;N_2)>0$.
 
 \item In the case where the operator modeling the fractional dispersion  depends also on the time variable, i.e. $L=L(x,t)$, one needs to consider the family of propagators $\{U_{\alpha}(t',t'')\,:\,t',\,t^{''}\in\mathbb R\}$. These operators are defined as $U(t',t^{''})u_0=u_{u_0}(x,t^{''})$, where $u_{u_0}(x,t)=u(x,t)$ is the solution of the IVP
 \begin{equation}
 \label {time-dep}
 \begin{cases}
 \begin{aligned}
 & i\partial_t u +L(x,t)^{\alpha}u=0,\;\;\;\;t\in\mathbb R,\;x\in \mathbb R^n,\\
 &u(x,t')=u_0(x).
 \end{aligned}
 \end{cases}
 \end{equation}
 To construct the solution of the IVP \eqref{time-dep}, one needs to assume a strong stability version on the hypothesis \eqref{hyp1}-\eqref{asymp} on a time interval $[-T,T]$. 
 
 This paper is a first step in the study of the (local) well-posedness of the IVP associated to the non-linear equation
 $$
  i\partial_t u +L(x,t,u(x,t))^{\alpha}u+P(u,\bar{u})=0,\;\;\;\;t\in\mathbb R,\;x\in \mathbb R^n.
 $$
These problems will be considered in forthcoming works.

\item
With the estimates gotten here, one can also deduce the local well-posedness, in some classical Sobolev spaces $H^s(\mathbb R^n)$, of the IVP associated to the fractional dissipative equation

 \begin{equation}
 \label {par}
 \begin{cases}
 \begin{aligned}
 & \partial_t u =-L(x)^{\alpha}u+P((\partial^{\beta}_xu)_{{\beta}\leq [\alpha]}),\;\;\;\;t>0,\;x\in \mathbb R^n,\\
 &u(x,0)=u_0(x),
 \end{aligned}
 \end{cases}
 \end{equation}
where $u=u(x,t)$ is a real valued function, $[\cdot]$ denotes the greatest integer function, and $P(\cdot)$ is a polynomial on its variables.

 \end{enumerate}
 \end{remark}
 
 \vskip.1in
 Next, we shall consider the IVP 
 \begin{equation}
 \label {fNLSa}
\begin{cases}
 \begin{aligned}
 & \partial_t u =iL(x)^{\alpha}u+Q(u,\bar{u}, \nabla u,\nabla \bar{u}),\;\;\;\;t\in\mathbb R,\;x\in \mathbb R^n,\\
 &u(x,0)=u_0(x)
 \end{aligned}
 \end{cases}
 \end{equation}
where $\alpha \in (0,1)$ and $Q:\mathbb C^{2n+2}\to \mathbb C$ is a polynomial  with $Q(\vec 0)=\partial_{z_j}Q(\vec 0)=0$, for $j=1,2,..,2n+2$, i.e. 
for some $N_1,\,N_2\in\mathbb N,\;N_1\geq 2$
 \begin{equation}
\label{key-hyp-ener2}
Q(\vec z)=\sum_{N_1\leq |\beta|\leq N_2}\;a_{\beta}\,\vec z^{\,\beta},\;\;\;\;\vec z\in \mathbb C^{2n+2},\;\,\beta\in (\mathbb N\cup\{0\})^{2n+2}.
\end{equation}
In addition, we shall assume that 
\begin{equation}
\label{key-hyp-ener1}
\partial_{\partial_{x_j}v}Q(v,\bar v,\nabla v,\nabla \bar{v} )\;\;\;\;\;j=1,2,..,n,\;\;\;\;\text{is real}.
\end{equation}

\begin{theorem}\label{A1a}

Let $\alpha \in (0,1)$. Assume \eqref{hyp1}-\eqref{hyp3}, \eqref{asymp}-\eqref{hyp-P}, and \eqref{J} where $J\in \mathbb N$, $J\geq 2\kappa-1$, for some $\kappa\in \mathbb N$, with $2\kappa>n/2+6$. Then for  any  $u_0\in H^{s}(\mathbb R^n)$, $\,s$ an even integer such that $s\in (n/2+4, 2\kappa-2]$,  there exist $T=T(\|u_0\|_{s,2};n; N_1;N_2)>0$ and a unique strong solution $u=u(x,t)$ of the IVP \eqref{fNLSa} such that 
 \begin{equation}
 \label{loss}
 u\in C([-T,T]:H^{s}(\mathbb R^n)) \cap C^1([-T,T]:H^{s-2\alpha}(\mathbb R^n)).
  \end{equation}
 Moreover the (locally defined) map data $\to$ solution is continuous from $H^{s}(\mathbb R^n)$ to $C([-T,T]:H^{s}(\mathbb R^n))$.
 \end{theorem}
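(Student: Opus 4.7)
The proof is a quasilinear energy argument combined with a Bona--Smith approximation. Compared with Theorem \ref{A1}, the gradient in $Q$ forces one to abandon contractive Picard iteration in $H^s$ (this is why only continuity, not analyticity, of the flow is asserted) and work instead with an estimate that exploits the algebraic structure of the first-order part of the nonlinearity. Two structural ingredients make the argument close: (i) the self-adjointness of $L$ in $L^2$ (from the symmetry and ellipticity of $(a_{jk})$) together with the hypothesis that $s$ is an even integer, so that $L^{s/2}$ is, via the functional calculus of $L$, a differential operator of order $s$ that commutes with $L^\alpha$ and yields an energy $\|L^{s/2}u\|_{L^2}$ equivalent to $\|u\|_{H^s}$; (ii) the real-valuedness of $\p_{\p_{x_j}v}Q$ guaranteed by \eqref{key-hyp-ener1}, which is exactly what allows the principal first-order term produced when linearizing $Q$ to be integrated by parts into a harmless zeroth-order remainder.

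Concretely, my plan is first to set $u_0^\epsilon=J_\epsilon u_0$ and produce smooth solutions $u^\epsilon$ via a quasilinear iteration in which the gradient dependence of $Q$ is linearized about the previous iterate, giving at each step a linear equation of the schematic form
\[
\p_t v = iL^\alpha v + \sum_{j=1}^n \alpha_j\,\p_{x_j}v + \sum_{j=1}^n \b_j\,\p_{x_j}\bar v + R,
\]
with $\alpha_j=(\p_{\p_{x_j}v}Q)(u^{\epsilon,(k)},\overline{u^{\epsilon,(k)}},\nabla u^{\epsilon,(k)},\nabla\overline{u^{\epsilon,(k)}})$ real by \eqref{key-hyp-ener1}, and $\b_j,R$ collecting the remaining terms. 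The main energy estimate is obtained by applying $L^{s/2}$ and pairing with $L^{s/2}v$ in $L^2$. Since $L^\alpha$ commutes with $L^{s/2}$ and is self-adjoint, $\mathrm{Re}\,\ji iL^\alpha L^{s/2}v,L^{s/2}v\jd=0$, killing the dispersive contribution; the leading piece of the transport term reduces via integration by parts to
\[
\mathrm{Re}\,\ji \alpha_j\p_{x_j}L^{s/2}v,L^{s/2}v\jd = -\tfrac12\!\int(\p_{x_j}\alpha_j)\,|L^{s/2}v|^2\,dx,
\]
bounded by $\|\nabla\alpha_j\|_{L^\infty}\|v\|_{H^s}^2$, and this is precisely where \eqref{key-hyp-ener1} is used; the $\b_j\p_{x_j}\bar v$ terms close via the identity $\int\b_j\p_{x_j}\bar v\,\bar v=-\tfrac12\int(\p_{x_j}\b_j)\bar v^2$, which requires no sign condition. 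The commutators $[L^{s/2},\alpha_j\p_{x_j}]$ and $[L^{s/2},\b_j\p_{x_j}]$ are of order $s$ rather than $s+1$ (since $[L,\p_{x_j}]$ already has order $2$) and are handled by the Moser-type estimates for the variable-coefficient operator $L^{s/2}$ developed in Sections 2--3. Sobolev embedding controls the coefficients in $W^{2,\infty}$ once $s>n/2+2$, yielding a uniform-in-$\epsilon$ $H^s$ bound on $u^\epsilon$ on a common time interval $[-T,T]$ with $T=T(\|u_0\|_{s,2};n;N_1;N_2)$.

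To pass $\epsilon\to 0$ one compares $u^\epsilon$ and $u^{\epsilon'}$ at the lower level $H^{s-2}$: their difference solves a linearized equation of the same form whose coefficients are controlled in $W^{1,\infty}$ by $\|u^\epsilon\|_{H^s}$, and the commutator bookkeeping needs $s-2>n/2+2$, which is the origin of the hypothesis $s>n/2+4$. Interpolating the resulting Cauchy estimate in $C([-T,T];H^{s-2})$ with the uniform $H^s$ bound upgrades convergence to $C([-T,T];H^s)$ and yields continuity of the data-to-solution map; uniqueness follows from the same $H^{s-2}$ difference estimate applied to two strong solutions. The main obstacle throughout is closing the $H^s$-energy estimate in the presence of the first-order transport term $\alpha_j\p_{x_j}$, which has the same order as $L^\alpha$ when $\alpha\leq 1/2$ and therefore cannot be absorbed by fractional dispersive smoothing; the only available mechanism is the integration-by-parts identity above, which produces an acceptable remainder precisely when $\alpha_j$ is real, i.e., precisely under \eqref{key-hyp-ener1}. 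That the data-to-solution map is merely continuous and not analytic is intrinsic to this quasilinear regime.
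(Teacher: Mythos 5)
Your proposal is correct in its essentials, and the heart of the argument — applying $L^{s/2}$, exploiting the commutation of $L^{s/2}$ with $L^\alpha$ via the functional calculus of $L$ (here is where evenness of $s$ and Lemma \ref{2.9} enter), killing the dispersive contribution since $L^\alpha$ is self-adjoint, and integrating by parts the principal first-order term using the real-valuedness in \eqref{key-hyp-ener1} to produce a remainder $-\frac12\int(\p_{x_j}\alpha_j)|L^{s/2}u|^2$ — coincides with what the paper does. Where you diverge is in the construction of the approximating solutions and in the comparison level. The paper does \emph{not} regularize the data and linearize about a previous iterate; instead it regularizes the \emph{equation} by adding an artificial viscosity term $-\epsilon L^2 u$ (Step 1 of the proof of Theorem \ref{A1a}), which supplies the smoothing estimate $\|Le^{-\epsilon t L^2+itL^\alpha}g\|_2\lesssim(\epsilon t)^{-1/2}\|g\|_2$ from Corollary \ref{remark-future} and thus makes a genuine contraction argument in $C([0,T_\epsilon];H^s)$ work, with $T_\epsilon\downarrow 0$ as $\epsilon\downarrow 0$; the $\epsilon$-independent time of existence is then recovered from the same energy estimate you describe. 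For the passage to the limit, the paper compares $u^\epsilon$ and $u^{\epsilon'}$ at the fixed low level $H^2$ (invoking the a priori $H^6$ bound to control the $(\epsilon-\epsilon')L^2u^\epsilon$ forcing term), rather than at $H^{s-2}$ as you do, and then uses Bona--Smith exactly as you indicate. Both routes are standard in quasilinear dispersive theory; the viscosity approach has the advantage that existence of the approximate solutions is immediate from the parabolic smoothing, whereas your iteration scheme implicitly defers to a separate existence theory for the variable-coefficient linear transport-dispersive equation at each step (solvable, but you'd still need some regularization there). Your numerology for $s>n/2+4$ and the observation that the hypothesis \eqref{key-hyp-ener1} is the sole mechanism available to absorb the first-order term — and that the resulting flow is therefore only continuous, not analytic — are both accurate and match the paper's account.
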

 
 \begin{remark}
 \vskip.1in
\begin{enumerate}

\item It will be clear from our proofs in section 2 and section 3 how to extend   the results in Theorem \ref{A1a}  to all $\alpha>0$.

\item The assumption that $s$ is an even integer appears when the operator $L^m, m\in \mathbb N$ is applied to the equation. More precisely, in Lemma \ref{2.9} we shall show that the operator $(L)^m=L..L$ $\,m$-times (with $(L)$ as in \eqref{L}) agrees with $L^m$ defined by the spectral calculus of the self-adjoint operator $L$.
This allows us to use the spectral properties of $L$ to commute $L^m,\,m\in\mathbb N,$ with the associated linear semi-groups appearing  in the proof as well as to compute $(L)^m$ of the non-linear term $Q(\cdot)$  in the energy estimate, computing their pointwise commutator and  performing integration by parts. The extension of these techniques to fractional powers of the operator $L$ will be considered in a future work. 

\item For $\alpha=1$ and $L=\Delta$, the local well-posedness of IVP \eqref{fNLSa}, under the hypothesis \eqref{key-hyp-ener1}, was considered in \cite{Kl}, \cite{Sh} and \cite{KlPo}. In addition, in these works, under appropriate assumptions on $n$ and $N_1$ (see \eqref{key-hyp-ener2}) the global existence of "small" solutions to the IVP \eqref{fNLSa}  was established.

\item Here, we are not trying to obtain the optimal values of $J$ and $s$ for which the result in Theorem \ref{A1a} holds.

\end{enumerate}

\end{remark}
The proofs of the  well-posedness results above rely on our study of the spectral properties of the operator $L=L(x)$.  Using that $L$ is an unbounded, positive, self-adjoint operator, we use the spectral theory to define its fractional powers. We prove that these powers have a natural domain in which they satisfy some key estimates. Among the crucial results one has the following:

\begin{theorem}
\label{2.1}
If the coefficients $a_{jk}$'s and $c$ satisfy the hypothesis \eqref{hyp1}-\eqref{hyp3}, \eqref{asymp} and \eqref{J}  with $\text{J}=1$, then for any $\alpha\in [0,1]$ it follows that
\begin{enumerate}
\item 
$ L^{\alpha}$ is self-adjoint and unbounded on $L^2(\mathbb R^n)$,
\vskip.05in
\item
the domain of $ L^{\alpha}$ is $H^{2\alpha}(\mathbb R^n)=(1-\Delta)^{-\alpha}L^2(\mathbb R^n)$,
\vskip.05in
\item
for any $\alpha\in [0,1]$ 
\begin{equation}
\label{r1}
\|f\|_{2\alpha,2}=\|(1-\Delta)^{\alpha}f\|_2\sim \|f\|_2+\|L^{\alpha}f\|_2,\;\;\;\forall f\in H^{2\alpha}(\mathbb R^n).
\end{equation}

\end{enumerate}

\end{theorem}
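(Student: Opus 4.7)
The plan is to proceed in four stages: first, show that $L$ itself is a non-negative self-adjoint operator on $L^2(\mathbb R^n)$ with domain exactly $H^2(\mathbb R^n)$; second, invoke the spectral theorem to build $L^\alpha$; third, identify its domain via interpolation; fourth, deduce the norm equivalence \eqref{r1}.

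For the first stage, I would start from the symmetric sesquilinear form
\[
\mathcal E(f,g) = \int_{\mathbb R^n} \bigl(a_{jk}(x)\,\partial_j f\,\overline{\partial_k g} + c(x)\, f\,\bar g \bigr)\,dx,
\]
defined on $H^1(\mathbb R^n)$. The symmetry \eqref{hyp1}, ellipticity \eqref{hyp2}, the sign condition \eqref{hyp3}, and the $L^\infty$ bounds coming from hypothesis $(J)$ with $J=1$ make $\mathcal E$ closed, symmetric, and non-negative. The Friedrichs construction then produces a non-negative self-adjoint operator that, on $C_c^\infty$ and hence on $H^2$, acts as \eqref{L}; non-negativity $\langle Lf,f\rangle\ge \lambda\|\nabla f\|_2^2\ge 0$ is immediate from integration by parts.

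The decisive regularity step is the identification $\operatorname{Dom}(L)=H^2(\mathbb R^n)$ with equivalence of the graph and $H^2$ norms. The inclusion $H^2\subseteq \operatorname{Dom}(L)$ is immediate from the $W^{1,\infty}$ bounds. For the converse, given $f\in \operatorname{Dom}(L)$, I would rewrite
\[
-\Delta f = Lf - \bigl[(\delta_{jk}-a_{jk})\partial_j\partial_k f - (\partial_k a_{jk})\partial_j f + c\,f\bigr],
\]
and use the asymptotic flatness \eqref{asymp} to choose $R$ large enough that on $\{|x|>R\}$ the coefficient $\delta_{jk}-a_{jk}$ is small enough that its second-order contribution can be absorbed into $\|\Delta f\|_2$; on the compact core $\{|x|\leq R\}$, interior elliptic regularity combined with a standard cut-off estimate controls the remaining terms. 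This yields $\|f\|_{H^2}\lesssim \|f\|_2+\|Lf\|_2$, which gives the $\alpha=1$ endpoint of \eqref{r1}. With $\operatorname{Dom}(L)=H^2$ in hand, I would define $L^\alpha$ through the spectral theorem; self-adjointness and non-negativity of $L^\alpha$ are automatic, and unboundedness for $\alpha>0$ follows because the spectrum of $L$ is unbounded (inherited from $-\Delta$ at infinity after the flatness hypothesis). To identify the fractional domain I would invoke the Heinz--Kato inequality, or equivalently complex interpolation of domains of non-negative self-adjoint operators, to obtain
\[
\operatorname{Dom}(L^\alpha) = [L^2(\mathbb R^n),\operatorname{Dom}(L)]_\alpha = [L^2(\mathbb R^n),H^2(\mathbb R^n)]_\alpha = H^{2\alpha}(\mathbb R^n),
\]
with equivalent norms, simultaneously delivering items (2) and (3) for all $\alpha\in[0,1]$.

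The main obstacle is the global elliptic regularity step: on all of $\mathbb R^n$ one does not get for free that $Lf\in L^2$ implies $f\in H^2$, since the coefficients are only $W^{1,\infty}$ and do not vanish. The precise role of \eqref{asymp} is to make $L$ a small $H^2$-perturbation of $-\Delta$ outside a sufficiently large ball, which is what allows the absorption/bootstrap argument to close; everything afterwards is essentially standard functional-analytic machinery applied to a non-negative self-adjoint operator.
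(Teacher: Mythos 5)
Your proposal is correct but follows a genuinely different route. For the self-adjointness and the identification $\operatorname{Dom}(L)=H^2$, the paper does not pass through the Friedrichs extension: it defines $L_{\min}$ as the graph closure of $L$ on $C_c^\infty$, then proves $L_{\min}=L^*$ by verifying the Reed--Simon criterion (absence of positive eigenvalues of $L^*$). That verification is the technical heart of the paper's Section 2.1 and requires showing that any $u\in L^2$ with $\mathcal L u\in L^2$ in the distributional sense already lies in $H^1_{\mathrm{loc}}$ (Lemma~\ref{AA}), which the paper deduces from Green-function estimates of Gr\"uter--Widman, followed by a Caccioppoli-type argument on balls of radius $R\to\infty$ (Corollary~\ref{AA2}). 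Your Friedrichs-form construction sidesteps Lemma~\ref{AA} entirely, since $\operatorname{Dom}(L_F)\subseteq H^1$ is built in; this is a real simplification. The price is that you still need a careful bootstrap (interior elliptic regularity to get $H^2_{\mathrm{loc}}$, then cutoffs) before the absorption estimate can be applied to a general $f\in\operatorname{Dom}(L_F)$ — the paper avoids this by proving the a priori bound on $C_c^\infty$ and extending by density, which requires knowing $C_c^\infty$ is a core, i.e. exactly the essential self-adjointness it has just established. For the fractional step, you invoke the abstract identity $\operatorname{Dom}(L^\alpha)=[L^2,\operatorname{Dom}(L)]_\alpha$; the paper does not cite this but proves it by hand via a Stein/three-lines argument in two directions, and in doing so it must resolve a non-trivial technical point: the natural analytic family $((\mathcal L)^z+I)^{-1}$ can degenerate for $z=a+ib$ (since $h^{a+ib}+1$ can vanish), so the paper switches to $(\mathcal L+1)^z$ defined through the spectral calculus of $\mathcal L$ itself. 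Your abstract interpolation step is sound, but it is worth being aware that it compresses precisely this delicacy; if you wanted a self-contained proof you would encounter the same issue the paper handles explicitly.
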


The extension of Theorem \ref{2.1} to $\alpha \in [1,\infty)$ will be addressed in Corollary \ref{abcd}. The proof of this extension will be given in full detail  for the values  $\alpha =\kappa,\,\kappa\in\mathbb N$. The general case will then  use complex interpolation between these values. As it was pointed out in the remarks to Theorem  \ref{A1a} we shall show that for any $\kappa\in\mathbb N$ the operator $L^{\kappa}$ defined by the functional calculus of $L$ is equal to $L...L$, i.e. the $\kappa$-composition of $L$ with itself.  Also,  we shall give an outline of a different proof of the estimate \eqref{r1} for $\alpha>1$. This is based on some commutator estimates and the results  in Theorem \ref{2.1}.

 \vskip.2in

 Next, we have the following results concerning unique continuation properties of solutions to the IVP associated to the equation \eqref{fNLS}. 
\begin{theorem}\label{A2}
Let $\,\alpha\in (0,1)$. Let $\,u_1,\,u_2$ be two solutions of the IVP associated to the equation  \eqref{fNLS}  provided by Theorem \ref{A1} with  $a_{jk}\in C^J(\mathbb R^n),\,j,k=1,..,n$, and $J\geq 2$ as in Theorem \ref{2.1}, such that
\begin{equation}
\label{class2}
u_1,\,u_2\in C([0,T]:H^{s}(\mathbb R^n))\cap C^1([0,T]:H^{s-2\alpha}(\mathbb R^n)).
\end{equation}
If there exist a non-empty open set $\Theta\subset \mathbb R^n$ and  $t^*\in [0,T)$ such that
\begin{equation}
\label{con2}
u_1(x,t)=u_2(x,t),\;\;\partial_tu_1(x,t)=\partial_tu_2(x,t),\;\;\;(x,t)\in \Theta\times\{t^*\},
\end{equation}
then $\,u_1(x,t)=u_2(x,t)$ for all $(x,t)\in\mathbb R^n\times [0,T]$.

\end{theorem}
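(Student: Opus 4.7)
The plan is to reduce the unique continuation assertion over the full time interval to a unique continuation statement for the stationary operator $L^\alpha$ at the single time $t^*$, and then to propagate in time via the well-posedness of Theorem \ref{A1}.

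First, set $w=u_1-u_2$. Since $P$ vanishes to order $N_1\geq 2$ at the origin, we may factor
\[
P(u_1,\bar u_1)-P(u_2,\bar u_2)=V_1(x,t)\,w+V_2(x,t)\,\bar w,
\]
where $V_1,V_2$ are polynomial expressions in $u_1,u_2,\bar u_1,\bar u_2$. By the Sobolev embedding $H^s\hookrightarrow L^\infty$ (valid since $s>n/2$) and the class \eqref{class2}, one has $V_1,V_2\in L^\infty(\mathbb R^n\times[0,T])$, so that $w$ satisfies the linear Schr\"odinger-type equation
\[
i\partial_t w+L(x)^\alpha w=-V_1w-V_2\bar w
\]
in the class $C([0,T]:H^s(\mathbb R^n))\cap C^1([0,T]:H^{s-2\alpha}(\mathbb R^n))$.

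Second, evaluate this equation at $t=t^*$ on $\Theta$. The hypothesis \eqref{con2} makes both $w$ and $\partial_t w$ vanish on $\Theta\times\{t^*\}$, and consequently so does the potential term $V_1w+V_2\bar w$. What remains is
\[
L(x)^\alpha w(\cdot,t^*)\equiv 0\;\;\text{on }\Theta,\qquad w(\cdot,t^*)\equiv 0\;\;\text{on }\Theta,
\]
with $w(\cdot,t^*)\in H^s\subset H^{2\alpha}$, so $L^\alpha w(\cdot,t^*)\in L^2$ by Theorem \ref{2.1}. At this point I would invoke the unique continuation property for the fractional elliptic operator $L^\alpha$ announced in the introduction: any $f\in H^{2\alpha}(\mathbb R^n)$ with $f=L^\alpha f=0$ on a nonempty open set must vanish identically. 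Applied to $f=w(\cdot,t^*)$ this gives $u_1(\cdot,t^*)\equiv u_2(\cdot,t^*)$ on all of $\mathbb R^n$. Regarding $t^*$ as an initial time, the uniqueness assertion of Theorem \ref{A1} then yields $u_1\equiv u_2$ on $[t^*,T]$. For $t\in[0,t^*]$, the time-reversal substitution $u(x,t)\mapsto\overline{u(x,2t^*-t)}$ converts \eqref{fNLS} into an equation of the same form (with $P$ replaced by $\widetilde P(z,\bar z)=\overline{P(\bar z,z)}$), reducing the backward problem to another application of Theorem \ref{A1} and closing the argument.

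The main obstacle is clearly the unique continuation property for $L^\alpha$ itself, not the reduction above. For the local case $\alpha=1$ this is the classical Aronszajn--Carleman elliptic UCP, but for fractional $\alpha\in(0,1)$ the operator $L^\alpha$ is nonlocal, and moreover the coefficients $a_{jk}$ are variable in $x$, so standard Carleman estimates do not apply directly. The expected route is a Caffarelli--Silvestre-type harmonic extension adapted to $L$, recasting the joint vanishing of $f$ and $L^\alpha f$ on $\Theta$ as a Cauchy condition on the boundary $\{y=0\}$ of a degenerate elliptic problem in $\mathbb R^{n+1}_+$, after which a Carleman or frequency-function/doubling argument can be run; constructing this extension and the corresponding UCP in the variable-coefficient setting is exactly the most delicate ingredient the authors must develop in their preparatory sections on $L^\alpha$.
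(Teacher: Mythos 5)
Your proposal is correct and takes the same route the paper intends: the authors explicitly state that Theorem \ref{A2} is a direct consequence of the stationary GUCP (Theorem \ref{A3}) and omit the proof, and your reduction --- evaluating the equation for $w=u_1-u_2$ at $t=t^*$ to deduce $w=L^\alpha w=0$ on $\Theta$, applying Theorem \ref{A3} to get $w(\cdot,t^*)\equiv 0$, and finishing with the uniqueness assertion of Theorem \ref{A1} --- is precisely that argument. The time-reversal step is even dispensable, since Theorem \ref{A1} already gives well-posedness on a symmetric interval $[-T,T]$ around the initial time and hence uniqueness both forward and backward from $t^*$.
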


 \begin{remark}
 \vskip.1in
\begin{enumerate}
\item 
It will be clear from our proof below that, under the appropriate hypothesis, the results in Theorem \ref{A2} can be extended to all values of $\alpha\in (0,\infty) \,\text{-} \,\mathbb N$.
\item
If $\alpha$ is an  integer, i.e. the operator modeling the dispersion is local, the result in Theorem \ref{A2} fails. In this case, $\alpha$ an  integer, the equation and the first hypothesis in \eqref{con2} imply the second one there.  The weak  hypothesis for unique continuation in non-local models was already established in \cite{KPV19} for the Benjamin-Ono  equation, in \cite{LiPo} for the Camassa-Holm equation (and related models), in \cite{HoPo} for the BBM equation, and in \cite{KPPV} for the case of fractional powers of the Laplacian and of $ (1-\Delta)$.

\item 
Theorem \ref{A2} also applies to solutions of the IVP \eqref{fNLSa} obtained in Theorem \ref{A1a}.

\item
The unique continuation argument for these non-local equations is based on stationary techniques, i.e. for a fixed time $t^*\in[0,T)$, different from  those used for the local ones. For the latter ones, the main tool are  Carleman estimates involving space and time, and where the hypothesis \eqref{con2}  is required in an open set in $\mathbb  R^n\times [0,T]$, see \cite{SaSc}, \cite{Iz}.
\end{enumerate}
\end{remark}

The proof of Theorem \ref{A2} is a direct consequence of a Global Unique  Continuation Property (GUCP) for the operators  $L^{\alpha}=(L(x))^{\alpha}$ with $\,\alpha\in(0,1)$:

\begin{theorem} {\label{A3}} 
 Let $\alpha\in (0,1)$ and $\,f\in H^{2\alpha}(\mathbb R^n)$. Assume that the coefficients of $L$, $a_{jk}$'s and $c$ satisfy the hypothesis \eqref{hyp1}-\eqref{hyp3}, \eqref{asymp} and \eqref{J}  with $\text{J}=1$, and in addition $a_{jk}\in C^2(\mathbb R^n),\,j,k=1,..,n$. If there exists a non-empty open  set $\Theta\subset \mathbb R^n$ such that 
\begin{equation}
\label{hyp7}
(L)^{\alpha}f (x)=f(x)=0,\;\;\;\;\text{ for all}\,\;\;x\in\Theta,
\end{equation}
 then $f\equiv 0$ in $\mathbb R^n$.

\end{theorem}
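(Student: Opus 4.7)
The plan is to reduce this non-local unique continuation property for $L^{\alpha}$ to a local unique continuation statement for a degenerate elliptic operator in one extra dimension, via a Caffarelli--Silvestre--Stinga--Torrea type extension. This mirrors the strategy used for $(-\Delta)^{\alpha}$ and $(1-\Delta)^{\alpha}$ in \cite{KPPV}, but it is now built from the spectral calculus of the variable-coefficient operator $L$ made available by Theorem \ref{2.1}.

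First, I would construct, for $f\in H^{2\alpha}(\mathbb R^n)$, an extension $U\in C^{\infty}(\mathbb R^n\times(0,\infty))$ solving
\begin{equation*}
\partial_y\bigl(y^{1-2\alpha}\,\partial_y U\bigr)-y^{1-2\alpha}\,L_xU=0\qquad\text{in }\mathbb R^n\times(0,\infty),
\end{equation*}
with Dirichlet trace $U(\cdot,0)=f$ and Neumann-type boundary relation
\begin{equation*}
-c_{\alpha}\lim_{y\to 0^+}y^{1-2\alpha}\,\partial_yU(\cdot,y)=L^{\alpha}f,
\end{equation*}
for a universal constant $c_{\alpha}>0$. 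Under the hypothesis \eqref{hyp7}, both the Dirichlet trace and the weighted co-normal derivative of $U$ vanish on the open set $\Theta\times\{0\}$.

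Second, I would extend $U$ across the hyperplane $\{y=0\}$ in a cylindrical neighborhood of $\Theta\times\{0\}$. Since $f\equiv 0$ on $\Theta$, the odd reflection defined by $\widetilde U(x,y)=U(x,y)$ for $y\ge 0$ and $\widetilde U(x,y)=-U(x,-y)$ for $y<0$ is continuous across $\Theta\times\{0\}$, and the vanishing of the weighted co-normal derivative there lets one verify, via integration by parts against test functions, that $\widetilde U$ is a weak solution of the analogous degenerate elliptic equation with weight $|y|^{1-2\alpha}$ in a full cylinder $\Theta\times(-\delta,\delta)$. Thus $\widetilde U$ solves a degenerate elliptic equation whose full Cauchy data vanish on the open subset $\Theta\times\{0\}$ of the degenerate hyperplane.

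Third, I would invoke a unique continuation property for this variable-coefficient degenerate elliptic operator---in the spirit of Almgren-type monotonicity formulae or a Carleman estimate adapted to the $A_2$ Muckenhoupt weight $|y|^{1-2\alpha}$ and to the metric $(a_{jk}(x))$---to conclude that $\widetilde U$ vanishes in an $(n+1)$-dimensional ball centered at a point of $\Theta\times\{0\}$. Away from $\{y=0\}$ the operator is uniformly elliptic with $C^2$ coefficients (by \eqref{hyp2} and the extra hypothesis $a_{jk}\in C^2$), so the classical strong unique continuation principle for second-order elliptic operators propagates the vanishing of $\widetilde U$ to all of $\mathbb R^n\times\mathbb R$, and restriction to $y=0$ delivers $f\equiv 0$. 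The main obstacle is precisely this third step: establishing unique continuation for the variable-coefficient degenerate operator from Cauchy data vanishing on an open set of the degenerate hyperplane. The $C^2$ regularity imposed on the $a_{jk}$ is what is needed to adapt the monotonicity/Carleman arguments available in the constant-coefficient case to the present setting.
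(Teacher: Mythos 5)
Your overall architecture coincides with the paper's: construct the Stinga--Torrea extension $U$ of $f$ with Dirichlet trace $f$ and weighted co-normal derivative $L^{\alpha}f$, observe that both Cauchy data vanish on $\Theta\times\{0\}$, propagate the vanishing across an $(n+1)$-dimensional ball by a unique continuation principle for the degenerate extension operator, and finally use the classical Aronszajn--Cordes UCP away from $\{y=0\}$ to conclude that $U\equiv 0$, hence $f\equiv 0$. The paper phrases the degenerate step as a WUCP for the Neumann problem on a half-ball (its Proposition labelled WUCP:ext) rather than via your odd reflection to a full cylinder, but that is a cosmetic difference: both reduce to the same statement.

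However, there is a genuine gap at exactly the point you yourself flag. You ``invoke a unique continuation property for this variable-coefficient degenerate elliptic operator --- in the spirit of Almgren-type monotonicity formulae or a Carleman estimate adapted to the $A_2$ weight'' --- but such a result with $C^2$ coefficients and the weight $|y|^{1-2\alpha}$ is not off-the-shelf, and proving it is the actual substance of the theorem. The paper devotes essentially all of Section 4 to this: it reformulates the degenerate equation in geodesic polar coordinates for the metric $1\times a^{-1}$, proves a variable-coefficient Carleman estimate following R\"uland (carefully tracking the lower-order error terms coming from the non-constant metric), derives a doubling inequality for solutions from that Carleman estimate, and then runs a rescaling/blow-up argument (using Fabes--Kenig--Serapioni regularity theory and compactness) to reduce to the constant-coefficient WUCP of R\"uland. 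None of that appears in your proposal. A secondary, smaller omission is the verification that $U$ lies in $\dot H^1(\mathbb R^{n+1}_+,y^{1-2\alpha}dxdy)\cap L^2_{loc}(y^{1-2\alpha}dxdy)$, which is needed to make the weak formulation on half-balls legitimate; the paper proves this using Stinga--Torrea semigroup bounds plus a Cacciopoli estimate. Finally, a small inaccuracy: the natural Riemannian structure for the Carleman/monotonicity argument is $a^{-1}=(a^{jk})$, not $a=(a_{jk})$ as you wrote, since the principal part of $-\nabla\cdot(a\nabla)$ corresponds (up to lower order) to the Laplace--Beltrami operator of $a^{-1}$.
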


 \begin{remark}  
 
 \begin{enumerate}
 \item
 The regularity assumption on $f$ in Theorem  \ref{A3} could perhaps be lowered to  $s = \alpha$. In this case, one would need to define a weak solution of the extension problem \eqref{prop:U} with Dirichlet boundary $u \in H^{\alpha}(\mathbb R^n)$ and make sense of the identity \eqref{eq:U} in $H^{-\alpha}(\mathbb R^n)$, (see Remark \ref{weak:sol:Dirichlet} for more comments).

\item  
Theorem \ref{A3} is also true if $L$ is defined on a smooth compact manifold, (see Proposition 1.4 in \cite{RulandTAMS2016} for the corresponding Carleman estimate), see also \cite{FKU}.

\end{enumerate}
\end{remark}
In the constant coefficient homogeneous case, \textit{i.e.} $a(x) \equiv a_0$ and $c(x) \equiv 0$, Theorem \ref{A3} is a classical result of M. Riesz, for smooth enough $f$, (see \cite{Ri}). If $f \in H^s(\mathbb R^n)$, with $s \ge \alpha$, Theorem \ref{A3}  is a consequence of the Weak Unique Continuation Property (WUCP) result proved by R\"uland in Proposition 2.2 in \cite{Ru1} for the Caffarelli-Silvestre extension problem \cite{CaSi} (see also \cite{FaFe,Yu}, \cite{CaSir}). This was formalized by Gosh-Salo-Uhlmann \cite{GhSaUh} by using the regularity theory for the extension problem in \cite{FKS} and then extended to rougher $f\in H^r(\mathbb R^n)$, for $r \in \mathbb R$,  (see Theorem 1.2 in \cite{GhSaUh}). In Theorem 1.6 in \cite{KPPV}, the current authors showed how to deduce the case of $-\Delta+1$ from the case of $-\Delta$, by adding one extra spatial dimension.


The case of homogeneous variable coefficients was also considered by R\"uland in \cite{Ru1}. She established the relevant Carleman estimate there and sketched  briefly the proof of the unique continuation for the associated extension problem with $C^2$ coefficients (see page 113  in \cite{Ru1} just before Section 7.2). In \cite{RulandTAMS2016}, R\"uland revisited the homogeneous variable coefficients case for $C^\infty$ coefficients and gave a full proof of the unique continuation property for the extension problem associated to operators with coefficients of this regularity. Here, for completeness, we will give a detailed proof of the unique continuation result for the extension problem for elliptic operators with $C^2$ coefficients as it was stated in Theorem \ref{A3} (see Proposition \ref{WUCP:ext} below). The proof relies on the doubling property deduced from R\"uland's Carleman estimate and a blow-up argument which allows to come back to the WUCP for the constant coefficients case, a strategy suggested by R\"uland's sketch in  \cite{RulandTAMS2016}.

We use this property to prove Theorem \ref{A3} for the operator $L$ in the case of variable coefficients. For a function $f \in Dom({L}^{\alpha})$, we use the Stinga-Torrea extension \cite{StiTorCPDE2010}, which generalizes the Caffarelli-Silvestre one \cite{CaSi}, to the upper-half space. Recalling the property $Dom({L}^{\alpha})=H^{2\alpha}(\mathbb R^n)$ that was proved in the first part of the paper (see Theorem \ref{2.1}) by using spectral theory, we show that the extension $U$ associated to a function $u \in H^{2\alpha}(\mathbb R^n)$ is a weak solution of the extended problem in the class $\dot{H}^1(\mathbb R_+^{n+1},y^{1-2\alpha}dxdy) \cap L^2_{loc}(\mathbb R_+^{n+1},y^{1-2\alpha}dxdy)$, where $y>0$ is the extension variable. This allows us to use the WUCP for the extension problem and deduce that $U$ has to vanish in a small half-ball $B_r^+$, for some $r>0$. We conclude then from the  classical WUCP (see \cite{AronJMPA1957,Cordes1956}) that $U$ must vanish in $\mathbb R^n \times (\epsilon,+\infty)$, for any $\epsilon>0$, which concludes the proof of Theorem \ref{A3} by letting $\epsilon \to 0$. 

Finally, let us mention that the analog of the unique continuation property in Theorem \ref{A3} fails for the fractional discrete Laplacian (see \cite{FeBeRoRu}).


\vskip.1in

The rest of this manuscript is organized as follows. Section 2 contains a review of spectral theory of unbounded operators, and the deduction of several estimates concerning the fractional powers and other functional calculus of the operator $L=L(x)$. In particular, it contains the proof of  Theorem \ref{2.1} and its extension to higher values of $\alpha$. This will allow us to obtain all the ingredients  needed in the proof of Theorem \ref{A1}, which is contained in section 3. The proof of  Theorem \ref{A3} will be given in Section 4.  Theorem \ref{A2} is  a direct consequence of this, therefore its proof will be omitted.

\subsection{Notation}

To conclude this introduction, we introduce the following notations:
$$
A \lesssim B\;\; \;\;\text{if}\;\;\;\;\exists \,c>0 \;\;\;\text{s.t.}\;\;\;\; A\leq cB.
$$
Similarly, $A \gtrsim B$, and 
$$
A\sim B\; \; \;\;\text{if}\;\;\;\;A \lesssim B\;\; \;\;\text{and}\;\;\;\;A \gtrsim B.
$$

\vskip.1in

\section{Spectral Theory for $L=L(x)$}

In this section, we shall establish some results concerning properties of the operator 
\begin{equation}
\label{L1}
Lv=L(x)v=-\partial_{x_k}(a_{jk}(x)\partial_{x_j}v)+c(x)v=\mathcal {L}v+c(x)v,
\end{equation}
and its fractional powers $L^{\alpha}$. We shall assume  the hypothesis \eqref{hyp1}-\eqref{hyp3}, \eqref{J} with $J=1$, and \eqref{asymp}.

We shall start by proving  Theorem \ref{2.1}. This is partly inspired by \cite{stri}. First, we shall recall some definitions and results concerning spectral theory of unbounded operators. These can be found in  \cite{re-si-I},  \cite{re-si-II}, \cite{Yo}, \cite{rudin}, and \cite{davis}.

\subsection{Proof of Theorem \ref{2.1} in the case $\alpha=1$}


We consider the case $\alpha=1$. 
Let $\mathcal D\equiv C^{\infty}_0(\mathbb R^n)$. Then $L$ is a non-negative definite symmetric operator on $\mathcal D$.  Let $L_{min}$ be the $L^2$-closure of $L$ on $\mathcal D$.

The domain of $L_{min}$, $D_{min}(L)$, is defined as
$$
D_{min}(L)=\{f\in L^2\,: \,\exists\, (f_j)\subseteq \mathcal D\,\,\text{s.t.} \;f_j\to f\;\text{and}\;Lf_j\to g  \;\text{in}\;L^2\},
$$
and we write
$$
g=L f.
$$

\vskip.05in

\underline{Claim 1:} $L$ defined on $D_{min}(L)$, i.e. $L_{min}$, is a closed operator.
\vskip.1in
We shall prove : if $\{f_k\,:\,k\in \mathbb N\} \subset D_{min}(L)$ with 
$$
f_k\to f \;\;\text{in}\;L^2\;\;\;\text{and}\;\;\;Lf_k\to g \;\text{in} \;L^2,\text{then}\;\;f\in D_{min}(L) \;\;\text{and}\;\,g=Lf.
$$
For each $f_k$ there exists $\{f_{k,j}\,:\,j\in\mathbb N\}\subset \mathcal D$ such that 
$$
f_{k,j}\to f_k,\;\;L f_{k,j}\to Lf_k\;\;\text{in}\;L^2\;\;\text{as}\,\,j\uparrow \infty.
$$

Thus, for each $k\in\mathbb N$ we choose $j=j(k)\in\mathbb N$ such that 
$$
\|f_{k,j(k)}-f_k\|_2+\|L f_{k,j(k)}-L f_k\|_2<\epsilon_k\;\,\text{with}\;\;\epsilon_k\downarrow 0\;\;\text{as}\;\;k\uparrow \infty.
$$
Therefore,
$$
\|f_{k,j(k)}-f\|_2\leq \|f_{k,j(k)}-f_k\|_2+\|f_{k}-f\|_2\to 0\;\;\text{as}\;\;k\uparrow \infty,
$$
and
$$
\|Lf_{k,j(k)}-g\|_2\leq \|L f_{k,j(k)}-L f_k\|_2+\|Lf_{k}-g\|_2\to 0\,\; k\uparrow \infty.
$$

By definition of $D_{min}(L)$  one has $\;f\in D_{min}(L)$ and $\,g=Lf$. 

Clearly, $L f=g$ in the sense of distributions, so $g$ is unique. More precisely,  if $f_l,\,\tilde{f_l}\in \mathcal D,\,l\in\mathbb N$, with $f_l\to f$ and $\tilde{f_l}\to f$ in $L^2$ with  $L(f_l)\to g$ and $L(\tilde{f_l})\to \tilde{g}$ in $L^2$, then $g=\tilde{g}$, since $L(f)=g$ in $\mathcal D'$. To see this, let $\varphi\in \mathcal D$, then
$$
\aligned
&\int(g-\tilde g)\varphi=\lim_{l\to \infty} \int L(f_l-\tilde{f_l})\varphi\\
&=\lim_{l\to \infty}(\int a_{jk}\partial_{x_k}(f_l-\tilde{f_l})\partial_{x_k}\varphi+\int c(f-\tilde{f_l}))\\
&=\lim_{l\to \infty}(-\int \partial_{x_k}a_{jk}(x)(f_l-\tilde{f_l})\partial_{x_k}\varphi
-\int a_{jk}(f_l-\tilde{f_l})\partial^2_{x_kx_j}\varphi\\
&+\int c(f_l-\tilde{f_l}))=0,
\endaligned
$$
since $f_l-\tilde{f_l}\to 0$ in $L^2(\mathbb R^n)$.

\vskip.05in
Define $D_{max}(L)$ as
$$
\{f\in L^2 : \text{the distribution}\,L(f) \text{ can be identified with a }L^2\,\text{function}\},
$$
i.e.
$$
D_{max}(L)=\{f\in L^2(\mathbb R^n) : \,L(f) \in L^2(\mathbb R^n)\}.
$$
Since $\mathcal D$  is dense in $L^2(\mathbb R^n)$, $L^*$ is the  the operator whose domain consists of all $f\in L^2(\mathbb R^n)$ s.t. 
$$
\exists \,g \in L^2(\mathbb R^n)\; \,\text{s.t.}\; \,\forall\; h\in D_{min}(L)\;\;\;\;\langle Lh,f\rangle=\langle h, g\rangle,
$$

Thus, $D_{max}(L)=D(L^*)$ is the largest possible domain where one can consider the operator $L$. Our aim is to show that $D_{min}(L)=D_{max}(L)$, i.e. $L$ is self-adjoint. To show this, we shall recall  the following  result in \cite{re-si-II}, pages 136-137, see also \cite{stri} :

\begin{lemma}\label{reg}
Let $A$ be any closed, negative definite, symmetric, densely defined operator on a Hilbert space. Then $A$ is self-adjoint, $A=A^*$, if and only if there are no eigenvectors with positive eigenvalues in the domain of $A^*$.
\end{lemma}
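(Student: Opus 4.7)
The plan is to prove the two implications separately, and for the nontrivial direction to reduce self-adjointness to the surjectivity of $A-\lambda I$ for some positive $\lambda$.

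The easy direction ($\Rightarrow$) is immediate: if $A=A^*$ and $f\in D(A^*)=D(A)$ satisfies $A^*f=\mu f$, then $\mu\|f\|^2=\langle Af,f\rangle\le 0$ by negative definiteness, so $\mu\le 0$, ruling out any positive eigenvalues.

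For the main direction ($\Leftarrow$), I would fix any $\lambda>0$ and show that $A-\lambda I:D(A)\to H$ is a bijection. First, the coercivity estimate
$$\|(A-\lambda I)f\|\,\|f\|\ge -\mathrm{Re}\,\langle(A-\lambda I)f,f\rangle=-\langle Af,f\rangle+\lambda\|f\|^2\ge \lambda\|f\|^2$$
yields $\|(A-\lambda I)f\|\ge\lambda\|f\|$, so $A-\lambda I$ is injective. Since $A$ is closed, this lower bound forces $\mathrm{Ran}(A-\lambda I)$ to be closed. Next, using that $A$ is densely defined and symmetric (hence $(A-\lambda I)^*=A^*-\lambda I$), one has
$$\mathrm{Ran}(A-\lambda I)=\mathrm{Ran}(A-\lambda I)^{\perp\perp}=\mathrm{Ker}(A^*-\lambda I)^{\perp}.$$
The standing hypothesis that $A^*$ has no eigenvector with positive eigenvalue gives $\mathrm{Ker}(A^*-\lambda I)=\{0\}$, so $\mathrm{Ran}(A-\lambda I)=H$ and $\lambda\in\rho(A)$.

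To conclude, I would use the standard fact that a closed symmetric operator possessing a real point in its resolvent set is self-adjoint. Given any $f\in D(A^*)$, set $g=(A^*-\lambda I)f$ and choose the unique $h\in D(A)$ with $(A-\lambda I)h=g$; since $A\subseteq A^*$, also $(A^*-\lambda I)h=g$, so $(A^*-\lambda I)(f-h)=0$ and therefore $f=h\in D(A)$. This gives $D(A^*)\subseteq D(A)$, and combined with symmetry one obtains $A=A^*$. The main obstacle is the bookkeeping in the last step: one must be careful to identify $(A-\lambda I)^*$ with $A^*-\lambda I$ (which is where the density of $D(A)$ enters) and to check that the surjectivity of $A-\lambda I$ on $H$ — rather than surjectivity of $A-zI$ for nonreal $z$, as in the classical criterion — is enough to conclude self-adjointness for this semi-bounded operator.
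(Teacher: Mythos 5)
Your proof is correct. The paper itself offers no proof of Lemma~\ref{reg}: it simply cites the result from Reed--Simon, Vol.~II, pp.~136--137 (and Strichartz), so there is no in-paper argument to compare against. Your argument is the standard proof of the basic criterion for self-adjointness adapted to a semi-bounded operator: the coercivity estimate $\|(A-\lambda I)f\|\ge\lambda\|f\|$ gives injectivity and closed range, the identity $\mathrm{Ran}(A-\lambda I)^{\perp}=\mathrm{Ker}(A^{*}-\lambda I)$ together with the hypothesis gives surjectivity, and the final bootstrap $D(A^{*})\subseteq D(A)$ is exactly the argument one uses to show that a real point of the resolvent set forces a closed symmetric operator to be self-adjoint. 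The worries you voice in the last paragraph are not actual obstacles: the identification $(A-\lambda I)^{*}=A^{*}-\lambda I$ is immediate since $\lambda I$ is bounded, and for a semi-bounded symmetric operator one real $\lambda$ outside the numerical range plays precisely the role that $\pm i$ play for a general symmetric operator, because the coercivity at a single such $\lambda$ already forces both deficiency spaces to vanish via your explicit bootstrap. A minor stylistic point: you invoke ``the standard fact that a closed symmetric operator with a real point in its resolvent set is self-adjoint'' and then proceed to prove it from scratch, so the appeal to the standard fact is redundant, though harmless.
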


We apply this to $-L$ with
$$
-Lu=\partial_{x_k}(a_{jk}(x)\partial_{x_j}u)-c(x)u,
$$
which is closed in $D_{min}(L)$, symmetric, densely defined in $L^2(\mathbb R^n)$, and negative definite. We need to show that $-L$ has no positive eigenvalues in $D((-L)^*)=D_{max}(L)$. This means, if $u\in D_{max}(L)$ such that 
\begin{equation}
\label{self-adj}
-L u=\lambda u,\;\,\lambda>0, \;u\in L^2(\mathbb R^n),\;\text{then}\;\;u\equiv 0.
\end{equation}

To prove this we need the following result:

\begin{lemma}\label{AA}
Assume that $u\in L^2(\mathbb R^n)$ and that 
$$
\partial_{x_j}(a_{jk}(x)\partial_{x_k}u)=f,\;\;f\in L^2(\mathbb R^n)\;\;\text{in the distributional  sense},
$$
then $u\in H^1_{loc}(\mathbb R^n)$.
\end{lemma}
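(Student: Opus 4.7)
The statement is a local regularity result: divergence-form elliptic equations with Lipschitz coefficients upgrade $L^2$ distributional solutions to $H^1_{loc}$ solutions. Since we know a priori only that $u\in L^2(\mathbb R^n)$, the identity $\mathcal L u=f$ cannot be tested directly against a function like $\varphi^2 u$, because the integration by parts would already require the gradient control we are trying to produce. My plan is to regularize $u$ by mollification, derive a Caccioppoli-type energy estimate uniform in the regularization parameter, and then extract $H^1_{loc}$ regularity by weak compactness.

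Let $\rho_\delta$ be a standard approximation of the identity and set $u_\delta:=u*\rho_\delta\in C^\infty(\mathbb R^n)\cap L^2(\mathbb R^n)$. Mollifying the distributional identity $\mathcal L u=f$, a direct computation yields
$$
\mathcal L u_\delta = f*\rho_\delta + \partial_j R_\delta^{j,k},\qquad R_\delta^{j,k}:=a_{jk}\,\partial_k u_\delta - (a_{jk}\partial_k u)*\rho_\delta,
$$
as a pointwise identity between smooth functions on $\mathbb R^n$. Because $a_{jk}\in W^{1,\infty}(\mathbb R^n)$ and $u\in L^2(\mathbb R^n)$, Friedrichs' commutator lemma yields the uniform bound $\|R_\delta^{j,k}\|_{L^2}\lesssim\|a_{jk}\|_{W^{1,\infty}}\|u\|_2$ together with the strong convergence $R_\delta^{j,k}\to 0$ in $L^2(\mathbb R^n)$ as $\delta\to 0^+$.

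Fix now a cutoff $\varphi\in C_c^\infty(\mathbb R^n)$ and pair the previous identity with $\varphi^2 u_\delta\in C_c^\infty(\mathbb R^n)$. Integration by parts is classical since every function involved is smooth; transferring the derivative $\partial_j$ off $R_\delta^{j,k}$ onto the test function, and using the symmetry and ellipticity \eqref{hyp2} on the main left-hand side term, we obtain
$$
\lambda\int \varphi^2|\nabla u_\delta|^2 \le \Big|\int f*\rho_\delta\,\varphi^2u_\delta\Big| + \Big|\int R_\delta^{j,k}\,\partial_j(\varphi^2u_\delta)\Big| + 2\Big|\int a_{jk}\,\varphi\,\partial_j\varphi\,u_\delta\,\partial_k u_\delta\Big|.
$$
Cauchy--Schwarz plus Young's inequality with small parameter $\epsilon>0$ controls each term; in particular the contributions $\epsilon\int\varphi^2|\nabla u_\delta|^2$ arising from $\int R_\delta^{j,k}\varphi^2\partial_j u_\delta$ and from the cross term are absorbed into the left-hand side, leaving
$$
\int \varphi^2|\nabla u_\delta|^2 \le C\Bigl(\|\nabla\varphi\|_\infty^2\,\|u_\delta\|_2^2 + \|\varphi\|_\infty^2\,\|f\|_2^2 + \sum_{j,k}\|R_\delta^{j,k}\|_2^2\Bigr)
$$
with $C$ independent of $\delta$. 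As $\delta\to 0^+$ the right-hand side stays bounded, so $\{\varphi u_\delta\}$ is bounded in $H^1(\mathbb R^n)$; together with the strong convergence $\varphi u_\delta\to\varphi u$ in $L^2(\mathbb R^n)$, weak compactness gives $\varphi u\in H^1(\mathbb R^n)$, and the arbitrariness of $\varphi$ yields $u\in H^1_{loc}(\mathbb R^n)$.

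The main obstacle is the treatment of the commutator term $\partial_j R_\delta^{j,k}$: at face value it is only uniformly bounded in $H^{-1}(\mathbb R^n)$, so the derivative must be moved back onto the test function in the energy identity. Once this is done, the uniform $L^2$-bound on $R_\delta^{j,k}$ furnished by Friedrichs' lemma, combined with the Young absorption trick, is precisely what breaks the circular dependence on $\int\varphi^2|\nabla u_\delta|^2$ and closes the argument.
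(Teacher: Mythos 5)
Your proof is correct, and it takes a genuinely different route from the paper's. The paper deduces $H^1_{loc}$ regularity via the Grüter--Widman Green function theory: it builds a specific sequence of $H^2$ test functions $\varphi_\epsilon$ out of the Green function $G(\cdot,y)$ of $\Lambda$ on a large ball, pairs the distributional identity against these, and shows directly that $u(x_0)$ equals a sum $v(x_0)+w(x_0)+z(x_0)$ of limits of averages, each of whose gradient is a priori in $L^2_{loc}$. Your approach instead mollifies $u$, uses the Friedrichs commutator lemma (a standard consequence of $a_{jk}\in W^{1,\infty}$ and $u\in L^2$: the commutator $R^{j,k}_\delta = a_{jk}\partial_k(u*\rho_\delta) - (a_{jk}\partial_k u)*\rho_\delta$ is uniformly bounded in $L^2$ with norm $\lesssim\|\nabla a\|_\infty\|u\|_2$ and tends to $0$ in $L^2$), derives the Caccioppoli estimate uniformly in $\delta$ after transferring the outer derivative off $\partial_j R^{j,k}_\delta$ onto the test function $\varphi^2 u_\delta$, and closes with weak $H^1$ compactness. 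This argument is self-contained and does not require invoking the fine Green function bounds of Grüter--Widman (nor does it need the separate treatment of $n=2$ through \cite{KN} that the paper mentions); it sits squarely within standard divergence-form elliptic regularity theory. What the paper's route buys is explicit control coming from potential-theoretic representations, which is used repeatedly later (see e.g.\ the $B_{2,\epsilon}$ and $B_{3,\epsilon}$ terms giving $w,z$ with $\nabla w,\nabla z\in L^2_{loc}$). One small imprecision in your write-up: since $a_{jk}$ is only $C^1$ (hypothesis $J=1$), the function $R^{j,k}_\delta$ is $C^1$, not smooth, so the identity $\mathcal L u_\delta = f*\rho_\delta + \partial_j R^{j,k}_\delta$ is a pointwise equality of continuous functions rather than of smooth ones; this has no bearing on the validity of the subsequent integration by parts.
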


To obtain Lemma \ref{AA}, we will use the result established in \cite{GW}, concerning properties of the Green function for the operator $- \mathcal L $ with
\begin{equation}
\label{def-Lambda}
- \mathcal L =\Lambda =\partial_{x_j}((a_{jk}(x)\partial_{x_k} \cdot).
\end{equation}
in a bounded domain $\Omega\subset \mathbb R^n, \,n\geq 3$, with the coefficients $a_{jk}$'s satisfying \eqref{hyp1}, \eqref{hyp2} and \eqref{J} with $J=1$. The case $n=2$ requires minor modifications. This can be done using \cite{KN}.

\begin{theorem}[\cite{GW}]\label{green}\hskip15pt Let $\Omega \subset \mathbb R^n$ with $n\geq 3$ be a bounded domain. There exists a unique function
$$
G:\Omega\times \Omega \to \mathbb R \cup\{\infty\},
$$
such that : $G\geq 0$, for any $y\in\Omega$ and any $r>0$, $$G(\cdot,y)\in W^{2,1}(\Omega\,\text{-}\,B_r(y))\cap L^{1,1}(\Omega),
$$ 
where $L^{1,1}(\Omega)=\{f\in L^1(\Omega)\,:\,\partial_{x_j}f\in L^1(\Omega),\,j=1,..,n\}$, 
and for any $\varphi\in C^{\infty}_0(\Omega)$ 
$$
-\int_{\Omega}a_{jk}(x)\partial_{x_j}G(x,y)\partial_{x_k}\varphi(x)dx=\varphi(y),
$$
and for any $f\in H^{-1}(\Omega)$ supported in $\Omega$ 
$$
v(y)=\int_{\Omega}G(x,y)f(x)dx,
$$
$v\in H^1(\mathbb R^n)$ solves $\Lambda v=f$, (see  \eqref{def-Lambda}) in the weak sense (in fact, in the $H^1$-weak sense, see \eqref{defH1w}).

Moreover, there exists $c>0$, independent of $\Omega$, such that
$$
|G(x,y)|\leq \frac{c}{|x-y|^{n-2}},\;\;\;\text{and}\;\;\;|\nabla_xG(x,y)|\leq \frac{c}{|x-y|^{n-1}}.
$$
\end{theorem}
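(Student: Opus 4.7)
The plan is to follow the classical Littman--Stampacchia--Weinberger / Gr\"uter--Widman strategy: construct approximate Green functions by mollifying the Dirac source, derive pointwise bounds uniform in the mollification parameter, and pass to the limit. The starting point is that the divergence-form operator $\mathcal L=-\Lambda$ is coercive on $H^1_0(\Omega)$ by the ellipticity \eqref{hyp2} together with Poincar\'e's inequality, so Lax--Milgram applies.

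For each $y\in\Omega$ and $\epsilon>0$ with $\overline{B_\epsilon(y)}\subset\Omega$, produce $G_\epsilon(\cdot,y)\in H^1_0(\Omega)$ solving
\begin{equation*}
\mathcal L\,G_\epsilon(\cdot,y)=|B_\epsilon(y)|^{-1}\chi_{B_\epsilon(y)}
\end{equation*}
weakly. Then $G_\epsilon\ge 0$ by the weak maximum principle. The heart of the matter is to prove the uniform pointwise estimates
\begin{equation*}
G_\epsilon(x,y)\le c\,|x-y|^{2-n},\qquad |\nabla_x G_\epsilon(x,y)|\le c\,|x-y|^{1-n},
\end{equation*}
for all $|x-y|\ge 2\epsilon$, with $c$ depending only on $n$, $\lambda$, and $\|a_{jk}\|_{C^1}$. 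The first is obtained via the De~Giorgi--Nash--Moser $L^p$--$L^\infty$ estimate applied to the nonnegative $\mathcal L$-harmonic function $G_\epsilon(\cdot,y)$ on annuli $B_{2r}(y)\setminus B_r(y)$, combined with a Stampacchia-type level-set energy argument showing $\{G_\epsilon>t\}$ shrinks fast enough. The second follows from the first by rescaling onto the ball $B_{|x-y|/2}(x)$ and applying interior $C^{1,\alpha}$-Schauder (or $W^{2,p}$) estimates, both available since $a_{jk}\in C^1$.

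With these uniform bounds, extract by a diagonal argument a subsequence $G_{\epsilon_k}(\cdot,y)\to G(\cdot,y)$ pointwise on $\Omega\setminus\{y\}$, in $L^1_{loc}(\Omega)$, and weakly in $H^1$ on compacta away from $y$. Passing to the limit in the weak formulation tested against $\varphi\in C_0^\infty(\Omega)$ yields the Green-function identity, the right-hand side converging to $\varphi(y)$ by continuity. Uniqueness is immediate because the difference of two such Green functions lies in $H^1_0(\Omega)$ and is weakly $\mathcal L$-harmonic, hence vanishes. The regularity $G(\cdot,y)\in W^{2,1}(\Omega\setminus B_r(y))$ comes from interior $W^{2,p}$ elliptic regularity with large $p$ (using $a_{jk}\in C^1$) together with Sobolev embedding; the $L^{1,1}(\Omega)$ bound follows by integrating the pointwise estimates, both $|x-y|^{2-n}$ and $|x-y|^{1-n}$ being locally integrable in $x$ since $n\ge 3$. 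The representation formula $v(y)=\int_\Omega G(x,y)f(x)\,dx$ for $f\in H^{-1}(\Omega)$ then follows by duality, using the symmetry $G(x,y)=G(y,x)$ inherited from $a_{jk}=a_{kj}$.

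The main obstacle is the pointwise bound $G_\epsilon(x,y)\le c|x-y|^{2-n}$ uniform in $\epsilon$ and independent of $\Omega$. This is the classical hard estimate of Littman--Stampacchia--Weinberger, requiring the full strength of De~Giorgi--Nash--Moser theory for divergence-form operators with bounded measurable coefficients; the gradient bound is similarly delicate and additionally exploits the $C^1$ regularity of the coefficients. Independence from $\Omega$ forces every step to be local in character, which is achieved by working on intrinsic balls $B_r(y)\subset\Omega$ with $r$ comparable to $|x-y|$, so that no global Poincar\'e or boundary-dependent constant enters the final estimates.
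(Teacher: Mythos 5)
The paper does not prove this statement at all: Theorem \ref{green} is cited verbatim from Gr\"uter and Widman \cite{GW}, so there is no ``paper's own proof'' to compare against. Your sketch does, however, correctly identify the Littman--Stampacchia--Weinberger/Gr\"uter--Widman strategy that underlies the cited result: mollified Dirac sources, uniform pointwise and gradient estimates via De Giorgi--Nash--Moser and interior Schauder/$W^{2,p}$ theory, passage to the limit, and duality for the representation formula. That is the right road map.

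Two small caveats worth flagging. First, your uniqueness argument asserts that the difference of two Green functions ``lies in $H^1_0(\Omega)$,'' but $G(\cdot,y)$ itself is only in $W^{1,q}_0(\Omega)$ for $q<n/(n-1)$; one must first argue that the singularities cancel (both Green functions have the same leading behavior $|x-y|^{2-n}$ near $y$) so that the difference has the extra integrability needed to land in $H^1_0$, or alternatively run a duality argument against test data in $H^{-1}$ as Gr\"uter--Widman do. Second, you attribute the dependence of $c$ to $\|a_{jk}\|_{C^1}$ for both bounds; in fact the size estimate $|G(x,y)|\le c|x-y|^{2-n}$ in \cite{GW} holds for merely bounded measurable coefficients with $c$ depending only on $n$ and the ellipticity ratio, while it is the gradient bound that genuinely exploits a modulus of continuity ($C^1$, or more generally Dini) of $a_{jk}$. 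Neither point is a gap in the approach, but both should be stated carefully if this were written out in full.
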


\begin{proof}[Proof of Lemma \ref{AA}]

Fix $x_0\in\mathbb R^n$, $|x_0|=R>0$. Choose $\theta\in C^{\infty}_0(\mathbb R^n)$ such that $\theta(x)=1$ for $|x|<2R$ and $supp(\theta)\subset\{x\in\mathbb R^n\,:\,|x|<4R\}$. Let $\eta\in C^{\infty}_0(\mathbb R^n)$,$\,\eta\geq 0$, $supp(\eta)\subset\{x\in\mathbb R^n:|x|\leq 1\}$, and $\int\eta(x)dx=1$. Define for $\epsilon>0$
$$\eta_{\epsilon}(x)=\frac{1}{\epsilon^n}\eta\big(\frac{x-x_0}{\epsilon}\big).$$

Let $G(x,y)$ be the Green function on $B_{10R}(0)$, and
$$
\tilde G(x,y)=\theta(x)G(x,y).
$$
Hence, for $\epsilon$ sufficiently small, one has
\begin{equation}
\label{def.varphi}
\begin{aligned}
\varphi_{\epsilon}(x)&=\int\tilde G(x,y)\eta_{\epsilon}(y)dy\\
&=\theta(x)\int G(x,y)\eta_{\epsilon}(y)dy=\theta(x)\tilde \eta_{\epsilon}(x).
\end{aligned}
\end{equation}

We observe  that $\varphi_{\epsilon}$ has compact support inside $\{x\in\mathbb R^n\,:\,|x|<4R\}$, and that $\tilde \eta_{\epsilon}, \varphi_{\epsilon}\in H^2_{loc}(\mathbb R^n)$ with
$$
\Lambda \tilde \eta_{\epsilon}=\partial_{x_j}(a_{jk}(x)\partial_{x_k} \tilde \eta_{\epsilon})=\eta_{\epsilon},
$$
in the $H^1$-weak sense, see \eqref{defH1w}. Moreover,
$$
\begin{aligned}
&\Lambda (\varphi_{\epsilon})=\Lambda(\theta\tilde \eta_{\epsilon})\\
&=\tilde \eta_{\epsilon}\Lambda \theta+2a_{jk}\partial_{x_k}\theta \partial_{x_j}\tilde \eta_{\epsilon} +\theta\eta_{\epsilon}\in L^2(\mathbb R^n).
\end{aligned}
$$
Hence, $\varphi_{\epsilon}\in H^2(\mathbb R^n)$, (see \cite{evans}, Theorem 6.3.1).

Assuming  that $u\in L^2(\mathbb R^n)$ is a weak solution of $\Lambda u=f$ (in the distribution sense), with $f\in L^2(\mathbb R^n)$, it follows that  for any  $\psi\in H^2(\mathbb R^n)$ with compact support one has 
\begin{equation}
\label{H2}
\int f(x)\psi(x)dx=\int u(x)\Lambda \psi(x)dx.
\end{equation}

In our proof, we choose $\psi$ in \eqref{H2} as $\varphi_{\epsilon}$  in \eqref{def.varphi} to get
\begin{equation}
\label{z1}
A_{\epsilon}=\int f\varphi_{\epsilon}=\int u\Lambda \varphi_{\epsilon}=B_{\epsilon}.
\end{equation}

By construction, using Fubini's theorem we see that
$$
A_{\epsilon}=\int \eta_{\epsilon}(y)\big(\int G(x,y)f(x)\theta(x)dx\big) dy =\int \eta_{\epsilon}(y) v(y)dy,
$$ 
with $v\in H^1(\mathbb R^n)$ and $\Lambda v= - f\theta$ in the $H^1$-weak sense, i.e. 
\begin{equation}
\label{defH1w}
\forall\, \chi\in H^1(\mathbb R^n) \;\;supp(\chi)\subseteq B_{10R}(0))\;\;\int a_{jk}\partial_{x_k}v\partial_{x_j}\chi=\int f\theta\chi.
\end{equation}
Hence, $v\in H^2_{loc}(\mathbb R^n)$, from  \cite{evans}, (Theorem 6.3.1), and 
\begin{equation}
\label{z0}
\lim_{\epsilon\downarrow 0} A_{\epsilon}=v(x_0),\;\;\;\text{ for a.e.}\,\;x_0.
\end{equation}

Next, we write

\begin{equation}
\label{B1-B3}
\begin{aligned}
&B_{\epsilon}=\int u\Lambda \varphi_{\epsilon}=\int u\Lambda (\tilde \eta_{\epsilon}\theta)\\
&=\int u\Lambda (\tilde \eta_{\epsilon})\theta+\int u\tilde \eta_{\epsilon}\Lambda (\theta)+2\int u a_{jk}\partial_{x_j}\tilde \eta_{\epsilon}\partial_{x_k}\theta\\
&=B_{1,\epsilon}+B_{2,\epsilon}+B_{3,\epsilon}.
\end{aligned}
\end{equation}

Thus,
$$
B_{1,\epsilon}=\int u\theta\eta_{\epsilon}dx\to u(x_0)\;\;\text{as}\;\;\epsilon\to 0,\;\;\;\text{ for a.e.}\,\;x_0.
$$
next by an argument similar to that in \eqref{z0}
$$
\aligned
&B_{2,\epsilon}=\int u(x)(\int G(x,y)\eta_{\epsilon}(y)dy)\Lambda \theta(x)dx\\
&=\int\eta_{\epsilon}(\int G(x,y)\Lambda (\theta)u(x) dx)dy\\
&=\int \eta_{\epsilon}(y)w(y)dy \to w(x_0),\;\;\text{as}\;\;\epsilon\to 0,
\endaligned
$$
with $w\in H^1(\mathbb R^n)$ by definition and $w\in H^2_{loc}$ since $\Lambda w=\Lambda (\theta) u\in L^2(\mathbb R^n)$. Finally
using a similar argument
$$
\aligned
&B_{3,\epsilon}=\int u(x)a_{jk}(x)\partial_{x_k}\theta(x)(\int\partial_{x_j}G(x,y)\eta_{\epsilon}(y)dy)dx\\
&=\int \eta_{\epsilon}(y)(\int a_{jk}(x)\partial_{x_j}G(x,y)\partial_{x_k}\theta(x)u(x)dx)dy\\
&=\int \eta_{\epsilon}(y)z(y)dy\to z(x_0),\;\;\text{as}\;\;\epsilon\to 0,\;\;\;\text{ for a.e.}\,\;x_0,
\endaligned
$$
with $z\in L^2(\mathbb R^n)$ vanishing in $\partial \Omega$ with $\Lambda z=-\partial_{x_k}(a_{jk}(x)\partial_{x_j}\theta u)$, therefore $\nabla z\in L^2_{loc}(\mathbb R^n)$. 

Collecting the above information we see that $u(x_0)$ is the sum of terms all of them with gradient in $L^2_{loc}(\mathbb R^n)$. This completes the proof of Lemma \ref{AA}.

\end{proof}

As a direct consequence of Lemma \ref{AA} one has the following result :

 \begin{corollary}\label{AA1}
Assume that $u, \,f\in L^2(\mathbb R^n)$ and 
$$
\partial_{x_j}(a_{jk}(x)\partial_{x_k}u)=f,\;\;\text{in the distributional  sense},
$$
then for any $h\in H^1(\mathbb R^n)$v with compact support one has
$$
-\int a_{jk}(x)\partial_{x_j}u(x)\partial_{x_j}h(x)dx= \int f(x)h(x)dx
$$
Hence,
$u\in H^2_{loc}(\mathbb R^n)$, by \cite{evans}, Theorem 6.3.1, and verifes $\Lambda u=f$ in the $H^1$-sense. 
\end{corollary}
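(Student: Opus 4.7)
The plan is to combine Lemma~\ref{AA}, which upgrades the hypotheses $u\in L^2$ and $\Lambda u \in L^2$ (in the distributional sense) to $u\in H^1_{loc}(\mathbb R^n)$, with a single integration by parts, and then to invoke Evans' interior regularity theorem. Because $\Lambda$ is formally self-adjoint on $C^\infty_0$ (using the symmetry $a_{jk}=a_{kj}$), the distributional identity $\partial_{x_j}(a_{jk}\partial_{x_k}u)=f$ is the statement that
$$\int u(x)\,\Lambda\varphi(x)\,dx \;=\; \int f(x)\,\varphi(x)\,dx,\qquad \forall\,\varphi\in C^\infty_0(\mathbb R^n).$$

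For the second step, since $u\in H^1_{loc}$ by Lemma~\ref{AA} and $a_{jk}\partial_{x_j}\varphi$ is $H^1$ with compact support (this only requires $a_{jk}\in W^{1,\infty}$; no second derivatives of the coefficients are needed), I would integrate by parts once to rewrite the left-hand side as $-\int a_{jk}\,\partial_{x_j}u\,\partial_{x_k}\varphi\,dx$. Hence the identity
$$-\int a_{jk}(x)\,\partial_{x_j}u(x)\,\partial_{x_k}\varphi(x)\,dx \;=\; \int f(x)\,\varphi(x)\,dx$$
holds for every $\varphi\in C^\infty_0(\mathbb R^n)$. To pass to an arbitrary $h\in H^1(\mathbb R^n)$ with compact support, I would mollify: let $\rho_\varepsilon$ be a standard mollifier and set $\varphi_\varepsilon=\rho_\varepsilon \ast h$; for all small $\varepsilon$, $\varphi_\varepsilon \in C^\infty_0(\mathbb R^n)$ with supports contained in a fixed compact neighbourhood of $\mathrm{supp}(h)$, and $\varphi_\varepsilon\to h$ in $H^1$. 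Both sides of the identity depend continuously on $\varphi$ in the $H^1$-topology restricted to such a fixed compact set, using $a_{jk}\partial_{x_j}u\in L^2_{loc}$ on the left and $f\in L^2$ on the right, so the identity passes to the limit.

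Once this $H^1$--weak formulation is in hand, with $f\in L^2$ and coefficients satisfying \eqref{hyp1}--\eqref{hyp2} together with \eqref{J} for $J=1$, Evans' interior regularity theorem (\cite{evans}, Theorem~6.3.1) directly yields $u\in H^2_{loc}(\mathbb R^n)$; one may then invoke the product rule pointwise almost everywhere to identify $\Lambda u = f$ in both the $L^2_{loc}$ and the $H^1$--weak sense. The only subtle point is the ordering of the argument: Lemma~\ref{AA} must be applied \emph{before} integrating by parts, in order to guarantee that $\partial_{x_j}u$ exists as an $L^2_{loc}$ weak derivative. Applying integration by parts directly under the bare hypothesis $u\in L^2$ would force all derivatives onto $\varphi$, returning us to the distributional formulation we started from. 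Because Lemma~\ref{AA} was established by an independent Green's function argument, there is no circularity.
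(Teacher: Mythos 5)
Your proof is correct and follows essentially the route the paper intends: Corollary~\ref{AA1} is stated there as a direct consequence of Lemma~\ref{AA}, and your argument supplies the details — apply Lemma~\ref{AA} to obtain $u \in H^1_{loc}(\mathbb R^n)$, integrate by parts once (which only requires $a_{jk}\in W^{1,\infty}$ together with the symmetry $a_{jk}=a_{kj}$), mollify to pass from $C^\infty_0$ test functions to arbitrary $h\in H^1$ of compact support, and invoke Evans' interior regularity theorem. Your observation that Lemma~\ref{AA} must be applied \emph{before} the integration by parts correctly isolates the one non-trivial point in upgrading from the distributional to the $H^1$-weak formulation.
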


\begin{corollary}\label{AA2}
Assume that $u, \,f\in L^2(\mathbb R^n)$ and 
$$
-Lu=\partial_{x_j}((a_{jk}(x)\partial_{x_k}u)-c(x)u=\lambda u,\;\;\;\;\text{in the distribution  sense},
$$
with $\lambda>0$, then $u\equiv 0$.
\end{corollary}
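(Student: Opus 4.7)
The plan is to test the equation against $\chi_R^2\bar u$, where $\chi_R$ is a cutoff adapted to a ball of radius $R$, and exploit the ellipticity together with the positivity of $\lambda$ and $c$ to absorb the resulting gradient terms and then send $R\to\infty$.

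First I would set $f:=\lambda u+c(x)u\in L^2(\mathbb R^n)$ so that the hypothesis $-Lu=\lambda u$ reads $\Lambda u=f$ in the distributional sense, where $\Lambda u=\partial_{x_j}(a_{jk}\partial_{x_k}u)$. By Corollary \ref{AA1}, $u\in H^2_{loc}(\mathbb R^n)$ and the weak identity
\begin{equation*}
-\int a_{jk}(x)\,\partial_{x_j}u(x)\,\partial_{x_k}h(x)\,dx=\int f(x)\,h(x)\,dx
\end{equation*}
holds for every $h\in H^1(\mathbb R^n)$ with compact support. Next I would pick a smooth cutoff $\chi_R\in C_0^\infty(\mathbb R^n)$ with $\chi_R\equiv 1$ on $B_R$, $\mathrm{supp}(\chi_R)\subset B_{2R}$, and $|\nabla \chi_R|\le C/R$. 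Since $u\in H^1_{loc}$, the function $h:=\chi_R^2\bar u$ lies in $H^1(\mathbb R^n)$ with compact support, so it is an admissible test function.

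Plugging $h=\chi_R^2\bar u$ in, expanding $\partial_{x_k}(\chi_R^2\bar u)=\chi_R^2\partial_{x_k}\bar u+2\chi_R\bar u\,\partial_{x_k}\chi_R$, and taking real parts, the symmetry $a_{jk}=a_{kj}$ (so that $a_{jk}\partial_{x_j}u\,\partial_{x_k}\bar u$ is real) together with the ellipticity \eqref{hyp2} and $c\ge 0$, $\lambda>0$ would yield
\begin{equation*}
\lambda_0\!\int\!\chi_R^2|\nabla u|^2+\lambda\!\int\!\chi_R^2|u|^2\le 2\Big|\mathrm{Re}\!\int a_{jk}\chi_R\bar u\,\partial_{x_j}u\,\partial_{x_k}\chi_R\Big|.
\end{equation*}
A Cauchy-Schwarz/Young's inequality on the right, using $\|a_{jk}\|_\infty<\infty$, absorbs half of the gradient term on the left and leaves
\begin{equation*}
\tfrac{\lambda_0}{2}\!\int\!\chi_R^2|\nabla u|^2+\lambda\!\int\!\chi_R^2|u|^2\le \frac{C}{R^2}\|u\|_2^2 .
\end{equation*}

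Letting $R\to\infty$ and using monotone convergence on the left gives $\lambda\|u\|_2^2=0$, hence $u\equiv 0$. The only delicate point is the absorption step, which hinges on the quadratic form $a_{jk}\xi_j\bar\zeta_k$ being real and bounded below by $\lambda_0|\xi|^2$ when $\xi=\nabla u$ is complex-valued; this is immediate from splitting $\nabla u$ into real and imaginary parts and applying the real ellipticity to each. Everything else is a standard cutoff argument enabled by the crucial local $H^1$-regularity provided by Lemma \ref{AA} and the weak formulation of Corollary \ref{AA1}.
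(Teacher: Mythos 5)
Your proof is correct and follows essentially the same approach as the paper: establish $H^1_{\mathrm{loc}}$ regularity via Corollary \ref{AA1}, test the weak identity against a cutoff of the form $\chi_R^2\bar u$, exploit symmetry, ellipticity, and $c\ge 0$, and send $R\to\infty$. Your version is marginally streamlined at the end — by retaining the nonnegative term $\lambda\int\chi_R^2|u|^2$ and absorbing the gradient contribution via Young's inequality, you obtain $\lambda\|u\|_2^2=0$ directly, whereas the paper drops that term, first deduces $\nabla u\equiv 0$ from a Caccioppoli-type bound $\int_{B_R}|\nabla u|^2\lesssim R^{-2}\|u\|_2^2$, and then concludes $u\equiv 0$ from $u$ being an $L^2$ constant; these are cosmetic variations on the same cutoff argument.
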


\begin{proof}[Proof of Corollary \ref{AA2}]

Applying Lemma \ref{AA} and Corrollary \ref{AA1} to
$$
-\mathcal Lu=(c(x)+\lambda)u=f,
$$ 
one has that $u\in H^2_{loc}(\mathbb R^n)$ and $u$ is a weak solution, in the $H^1$-weak sense, of $-Lu=\lambda u$. Then for  
$\theta\in C^{\infty}_0(\mathbb R^n)$ one has
$$
\aligned
&\lambda \langle \theta^2u,u\rangle=-\langle \partial_{x_k}(\theta^2u), a_{jk}\partial_{x_j}u\rangle -\langle \theta^2u,cu\rangle\\
&=-2\langle  \theta \partial_{x_k}\theta u, a_{jk}\partial_{x_j}u\rangle -\langle \theta^2\partial_{x_k} u, a_{jk}\partial_{x_j}u\rangle-\langle \theta^2u,cu\rangle \geq 0.
\endaligned
$$
But then,
$$
\aligned
&\langle \theta^2\partial_{x_k} u, a_{jk}\partial_{x_j}u\rangle \leq 2 |\langle  \theta \partial_{x_k}\theta u, a_{jk}\partial_{x_j}u\rangle|\\
&\lesssim  \|\nabla \theta\|_{\infty}\|u\|_2 \| \theta a_{jk}(x)\partial_{x_j}u\|_2\\
&\lesssim  \|\nabla \theta\|_{\infty}\|u\|_2 (\langle\theta^2 \partial_{x_k}u,a_{jk}(x)\partial_{x_j}u\rangle)^{1/2},
\endaligned
$$
i.e.
\begin{equation}
\label{z2}
(\langle\theta^2 \partial_{x_k}u,a_{jk}(x)\partial_{x_j}u\rangle)^{1/2}\lesssim  \|\nabla \theta\|_{\infty}\|u\|_2.
\end{equation}

Now, we pick $\theta=\theta_R$,  with $\theta_R\equiv 1$ on $B_R(\vec 0)$, $\theta_R\equiv 0$ on $(B_{2R}(\vec 0))^c$ and $\|\nabla\theta_R\|_{\infty}\leq c/R$.  Hence, from \eqref{z2} and the ellipticity hypothesis one has 
$$
\int_{B_{R}(\vec 0)}|\nabla u(x)|^2dx\leq \frac{c}{R^2}\,\|u\|^2_2.
$$
Letting $R\uparrow \infty$ it follows that $\nabla u\equiv 0$, so $u$ is constant, but $u\in L^2(\mathbb R^n)$. Hence $u\equiv 0$.

\end{proof}
Thus, from Lemma \ref{reg}, $L=L(x)$ is self-adjoint, $D_{min}(L)=D_{max}(L)$, since $u\in L^2(\mathbb R^n)$ satisfies that $u\in D(L^*)$ if there exists $\varphi\in L^2(\mathbb R^n)$ such that
$$
\langle L \psi,u\rangle =\langle \psi,\varphi\rangle\;\;\;\;\;\forall \,\psi\in \mathcal D,\;\;\text{and}\;\;\;L^*u=\varphi,
$$
i.e. $D(L^*)=D_{max}(L)$.
\vskip.1in

\begin{corollary}\label{AA3}
Assume that $u,\,f\in L^2(\mathbb R^n)$ and 
$$
Lu= - \partial_{x_j}(a_{jk}(x)\partial_{x_k}u)+c(x)u=f,\;\;\;\;u\in D(L).
$$
Then $\nabla u\in L^2(\mathbb R^n)$ and there exists $\{u_l:l\in\mathbb N\}\subset C^{\infty}_0(\mathbb R^n)$ such that
$$
u_l\to u,\;\;Lu_l\to Lu,\;\;\nabla u_l\to \nabla u \;\;\text{in}\;L^2(\mathbb R^n)\;\;\text{as}\;\;l\uparrow \infty.
$$
\end{corollary}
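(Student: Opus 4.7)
The plan is to exploit that we have already identified $D(L)=D_{min}(L)=D_{max}(L)$ via Lemma~\ref{reg} and Corollary~\ref{AA2}. By the very definition of $D_{min}(L)$, the hypothesis $u\in D(L)$ guarantees the existence of a sequence $\{u_l\}\subset C^\infty_0(\mathbb R^n)$ with $u_l\to u$ and $Lu_l\to Lu=f$ in $L^2(\mathbb R^n)$. The whole point of the corollary is therefore to upgrade this convergence by showing that $\nabla u_l$ is automatically Cauchy in $L^2(\mathbb R^n)$, and that its limit coincides with $\nabla u$ in the distributional sense (which in particular yields $\nabla u\in L^2(\mathbb R^n)$).

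For the Cauchy step, I would apply integration by parts pointwise, which is legitimate because $u_l-u_m\in C^\infty_0(\mathbb R^n)$. Writing
\[
\langle L(u_l-u_m),\,u_l-u_m\rangle=\int a_{jk}(x)\,\partial_{x_j}(u_l-u_m)\,\partial_{x_k}(u_l-u_m)\,dx+\int c(x)|u_l-u_m|^2\,dx,
\]
the ellipticity hypothesis \eqref{hyp2} together with $c\ge 0$ from \eqref{hyp3} yields
\[
\lambda\,\|\nabla(u_l-u_m)\|_2^2\le\langle L(u_l-u_m),\,u_l-u_m\rangle\le\|L(u_l-u_m)\|_2\,\|u_l-u_m\|_2.
\]
Since $\{u_l\}$ converges in $L^2$ and $\{Lu_l\}$ converges in $L^2$, the right-hand side tends to $0$ as $l,m\to\infty$, so $\{\nabla u_l\}$ is Cauchy in $L^2(\mathbb R^n)$.

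Let $v\in L^2(\mathbb R^n)$ denote the $L^2$-limit of $\nabla u_l$. Because $u_l\to u$ in $L^2$, we also have $\nabla u_l\to\nabla u$ in $\mathcal D'(\mathbb R^n)$; comparing the two limits gives $\nabla u=v\in L^2(\mathbb R^n)$, and consequently $\nabla u_l\to\nabla u$ in $L^2(\mathbb R^n)$, as desired. I do not expect any genuine obstacle here: the only thing that could go wrong would be the integration by parts, but it is performed on smooth compactly supported functions, so it is entirely elementary. The content of the corollary is really the Cauchy estimate above, which is a clean consequence of the coercivity built into \eqref{hyp2}--\eqref{hyp3}.
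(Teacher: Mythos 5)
Your argument is correct and is essentially the same as the paper's: both establish the coercivity estimate $\|\nabla\varphi\|_2^2\lesssim\|\varphi\|_2\|L\varphi\|_2+\|\varphi\|_2^2$ for $\varphi\in C^\infty_0$ via integration by parts and ellipticity, then apply it to the differences $u_l-u_m$ of an approximating sequence from $D_{min}(L)$ to show $\{\nabla u_l\}$ is Cauchy. Your version is marginally cleaner in exploiting $c\ge 0$ to drop the $\|\cdot\|_2^2$ term, and you make explicit the identification of the $L^2$-limit of $\nabla u_l$ with $\nabla u$ via distributional convergence, a step the paper leaves implicit.
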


\begin{proof}[Proof of Corollary \ref{AA3}]
 If $u\in C^{\infty}_0(\mathbb R^n)$
 $$
 \int a_{jk}(x) \partial_{x_j}u\partial_{x_k}u=\int u Lu-\int c(x)u u.
 $$ 
 Therefore,
 \begin{equation}
 \label{10L}
 \| \nabla u\|^2_2\leq c(\|u\|_2\|Lu\|_2+\|u\|_2^2).
 \end{equation}

By continuity, the same estimate holds for $u\in D(L)=D_{max}(L)=D_{min}(L)$. We can pick up $\{u_l:l\in\mathbb N\}\subset C^{\infty}_0(\mathbb R^n)$
such that 
$$
u_l\to u,\;\;Lu_l\to Lu,\;\;\text{in}\;L^2(\mathbb R^n)\;\;\text{as}\;\;l\uparrow \infty.
$$
Then
$$
 \| \nabla (u_l-u_{l'})\|^2_2\leq c(\|u_l-u_{l'}\|_2\|Lu_l-Lu_{l'}\|_2+\|u_l-u_{l'}\|_2^2).
 $$
and the corollary follows.
\end{proof}

We have a unique self-adjoint realization of $L=L(x),$ so using the spectral theory  (see \cite{re-si-I}, Theorem VIII.5) one can define functions of $L$, like the heat kernel $e^{-tL}$, the unitary group $\{e^{itL}\,:\,t\in\mathbb R\}$, its fractional powers $L^{\alpha},\,\alpha>0$, etc.

Next, we shall prove (1)-(3) in Theorem \ref{2.1} for $\alpha=1$.

\begin{proof}[Proof of Theorem \ref{2.1}  : \underline{Case $\alpha=1$}]

(1) was already proved. So we consider (2) and (3).
First, assuming that  $f\in \mathcal D=C^{\infty}_0(\mathbb R^n)$ we shall prove that
 \begin{equation}
 \label{key}
 \| f\|_{2,2}\equiv \|(1-\Delta)f\|_2\sim \|Lf\|_2+\|f\|_2\sim \|\mathcal Lf\|_2 +\|f\|_2,
\end{equation}
 with $\mathcal {L}$ as above, i.e. 
 $\mathcal {L}=-\partial_{x_j}(a_{jk}(x)\partial_{x_k}\cdot).$
 
 For $f\in \mathcal D$, one has, from our hypothesis, that  
 $$
 \|L f\|_2\leq c\|f\|_2+ \|\partial_{x_j}(a_{jk}(x)\partial_{x_k}f)\|_2\leq c\|f\|_{2,2}.
 $$
 Next, we shall see that
 \begin{equation}
 \label{step1}
 \|f\|_2+\|\nabla f\|_2+\|\Delta f\|_2\leq c  \|\mathcal {L}f\|_2+c\|f\|_2.
 \end{equation}
 By Corollary  \ref{AA3}   it suffices to see that
 $$
 \|\Delta f\|_2\leq c  \|Lf\|_2+c\|f\|_2\sim \|\mathcal Lf\|_2+\|f\|_2.
 $$
 Pick $\theta\in C^{\infty}_0(\mathbb R^n)$ such that $\theta\equiv 1,\|x\|\leq 1/2$ , $\theta\equiv 0,\,\|x\|>1$ and for $R>0$ define $\theta_R(x)=\theta(x/R)$. Let $A(x)=(a_{jk}(x))$. We write
$$
A(x)=I+A_1(x)+A_2(x),
$$ where
$$
A_1(x)=(1-\theta_R(x))(A(x)-I),\;\;\;\;A_2(x)=\theta_R(x)(A(x)-I).
$$ 
  Note that
  $$
  \| A_1(x)\|_{\infty}\leq \sup_{\|x\|\geq R/2}\;\|A(x)-I\|_{\infty},
  $$
  and $\text{support} \,A_2(x)\subset \{x\,:\,\|x\|\leq R\}$. Thus,
  $$
  -\Delta f=\mathcal Lf+{\rm div}(A_1\nabla f)+{\rm div}(A_2\nabla f), 
  $$
  and 
  $$
 \| \Delta f\|_2\leq c(\|\mathcal L f\|_2+\|div(A_1\nabla f)\|_2+\|{\rm div}(A_2\nabla f)\|_2).
 $$
 Since
 $$
 {\rm div}(A_j\nabla f)=\nabla A_j\cdot \nabla f+A_j \nabla^2f,\;\;\;\;\;j=1,2.
 $$
 and
 $$
 \|\nabla A_1\|_{\infty}\leq c\, (\|\nabla a_{jk}\|_{\infty}+\|a_{jk}\|_{\infty}+1),
 $$
so by using hypothesis \eqref{asymp}
$$
\begin{aligned} 
&\| \rm{div}(A_1\nabla f)\|_2\leq  c\, (\|\nabla a_{jk}\|_{\infty}+\|a_{jk}\|_{\infty}+1)\|\nabla f\|_2\\
&\;\;\;\;\;\;\;\;\;\;\;\;\;\;\;\;\;\;\;\;\;\;+\sup_{\|x\|\geq R/2}\|A(x)-I\|_{\infty}
\|\nabla^2f\|_2\\
&=O(\|\nabla a_{jk}\|_{\infty}+\|a_{jk}\|_{\infty}+1) \|\nabla f\|_2+c \,\epsilon_{R}\|\nabla^2f\|_2,
\end{aligned}
$$
with $\epsilon_R\to 0$ as $R\uparrow \infty$.  
Since $\text{support}\,A_2\subset \{\|x\|\leq R\}$, one has
 $$
 \begin{aligned}
 &\| {\rm div}(A_2\nabla f)\|_2=\big(\int_{\|x\|<R} |{\rm div}(A_2(x)\nabla f(x))|^2dx\big)^{1/2}\\
 &\leq c  (\sup_{\|x\|<R}\|\nabla A_2\|+1)\|\,(\int_{\|x\|<R}|\nabla f(x)|^2dx)^{1/2})\\
 &+c(\sup_{\|x\|<R}\| A_2\|+1)\|\,(\int_{\|x\|<R}|\partial^2 f(x)|^2dx)^{1/2}.
 \end{aligned}
 $$
 
By Schauder estimates for $\mathcal L$, (see \cite{evans} section 6.3.1 Theorem 1) since the coefficients $a_{jk}(\cdot)$ are $C^1$ 
 $$
 \sum_{|\alpha|\leq 2}\,\int_{\|x\|<R}|\partial_x^{\alpha}f|^2dx\leq c_R\,\int_{\|x\|\leq 2R}(|\mathcal Lf|^2+|f|^2)dx.
 $$
 Thus,
 $$
 \sum_{|\alpha|\leq 2}\,\int_{\|x\|<R}|\partial_x^{\alpha}f|^2dx\leq c_R (\|\mathcal Lf\|^2 +\|f\|_2).
 $$
 
 Gathering the above information we conclude that
 $$
\| \Delta f\|_2\leq c \,\epsilon_R \|\Delta f\|_2 +c_R\|\mathcal Lf\|_2+c_R\|\nabla f\|_2+c_R\|f\|_2.
$$
 Choosing $R$ so large such that $c\,\epsilon_R<1/2$ we get that
 \begin{equation}
 \label{res1}
 \| \Delta f\|_2\leq c \|\mathcal Lf\|_2+c_R\|\nabla f\|_2+c_R\|f\|_2\;\;\;\;\text{for}\;\;\;f\in\mathcal D.
\end{equation}
 which combined with \eqref{10L} gives
 \begin{equation}
 \label{final11}
 \|\Delta f\|_2\leq c \|\mathcal L f\|_2+c\|f\|_2\;\;\;\;\;\;\text{for}\;\;\;\;f\in\mathcal D.
 \end{equation}
 Hence, \eqref{key} holds for $f\in C^{\infty}_0(\mathbb R^n)$.

 Next, let $f\in D_{min}(L)=D_{max}(L)$. Choose $\{f_j\in \mathcal D\,:\,j\in\mathbb N\}$ such that 
 $$
 f_j\to f,\;\;\;\;\mathcal {L}f_j\to \mathcal {L}f\;\;\;\;\:\text{in}\;\;\;\;L^2\;\;\;\;\text{as}\;\;\;j\to \infty.
 $$
 Then, by \eqref{final11}
$$
\|f_j-f_k\|_{2,2} \leq c \|\mathcal {L}(f_j-f_k)\|_2+ \|f_j-f_k\|_2\to 0\;\;\;\text{as}\;\;\;j,k \to \infty.
 $$
Hence, the $f_j$'s converges in $H^2(\mathbb R^n)$ and $f\in H^2(\mathbb R^n)$ and \eqref{final11} holds for $f\in H^2(\mathbb R^n)$.

 This finishes the proof of Theorem \ref{2.1} for $\alpha=1$. We now turn to the proof of the case $\alpha\in(0,1)$
 \end{proof}
 
 \subsection{Proof of Theorem \ref{2.1} in the case $0<\alpha<1$}
 
 \begin{proof} 
 Since 
 $$
\langle Lf,f\rangle\geq 0, \;\;\;\; \forall f \in \mathcal D,
$$ and $L$ is self-adjoint with $D(L)=H^2(\mathbb R^n)$, the spectrum of $L$,  $\,\sigma(L)\subset [0,\infty)$, (see \cite{davis}, Theorem 4.3.1).

We shall use the following version of the spectral theorem given in \cite{davis} (Theorem 2.5.1):
\begin{theorem}
\label{spec-repr}
(Spectral Representation  {\cite{davis})} There   exist a finite measure $\mu$ in $S\times \mathbb N$  and a unitary operator 
$$
U:L^2(\mathbb R^n) \to L^2(S\times \mathbb N,d\mu),\;\;\;\;\;S\equiv \sigma(L)\subseteq[0,\infty),
$$ 
such that if $h:S\times \mathbb N\to \mathbb R$ is $h(s,n)=s$, then for $\xi\in L^2(\mathbb R^n)$
$$
\xi\in D(L) \Leftrightarrow  h\cdot U(\xi)\in L^2(S\times \mathbb N,d\mu).
$$
Moreover, $$ULU^{-1}\varphi=h\varphi,\;\;\;\;\;\forall\; \varphi \in U(\{D(L)\})$$
and 
$$g(h)\psi=Ug(L)U^{-1}\psi\,\;\;\;\; \forall g\in C_0(\mathbb R),\;\;\;\forall \psi \in L^2(S\times \mathbb N,d\mu).$$ 
\end{theorem}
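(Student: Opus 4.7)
The plan is to deduce this spectral representation from the abstract spectral theorem for unbounded self-adjoint operators, making the abstract statement concrete by producing the underlying measure space $S\times \nat$ via a cyclic subspace decomposition. The hypotheses of Part (1) of Theorem \ref{2.1} give that $L$ is self-adjoint on $L^2(\R^n)$, and the nonnegativity $\langle Lf,f\rangle\geq 0$ together with self-adjointness implies $\sigma(L)\subseteq[0,\infty)$, so the abstract machinery applies and the task reduces to unpacking it concretely.

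First I would pass to a bounded operator either via the Cayley transform $V=(L-iI)(L+iI)^{-1}$, which is unitary on $L^2(\R^n)$ by self-adjointness of $L$, or via the bounded resolvent $R_{\lambda}=(L+\lambda I)^{-1}$ for $\lambda>0$. In either case, standard approximation and the Stone--Weierstrass theorem deliver a continuous functional calculus $\varphi\mapsto \varphi(L)$ for $\varphi\in C_0([0,\infty))$. Using separability of $L^2(\R^n)$ together with a Zorn/maximality argument, I would then produce a decomposition
\[
L^2(\R^n)=\bigoplus_{k\in\nat}\hil_k,
\]
into mutually orthogonal $L$-invariant closed subspaces, each cyclic for $L$, meaning there is $v_k\in \hil_k$ with $\hil_k=\overline{\{\varphi(L)v_k:\varphi\in C_0([0,\infty))\}}$. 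On each $\hil_k$ the positive linear functional $\varphi\mapsto\langle \varphi(L)v_k,v_k\rangle$ defines a Radon measure $\mu_k$ on $S=\sigma(L)$ by the Riesz representation theorem, and the densely defined map $\varphi(L)v_k\mapsto\varphi$ extends by continuity to a unitary isomorphism $\ops_k:\hil_k\to L^2(S,d\mu_k)$ that conjugates $L$ into multiplication by $h(s)=s$.

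Assembling the pieces, I would set $d\mu=\sum_k d\mu_k\otimes\delta_{\{k\}}$ on $S\times\nat$ and define $U=\bigoplus_k \ops_k$. Rescaling the cyclic vectors so that $\|v_k\|^2=2^{-k}$ makes $\mu$ a finite measure as required, and the identities $ULU^{-1}\varphi=h\varphi$ on $U(D(L))$ together with $Ug(L)U^{-1}=g(h)$ for $g\in C_0(\R)$ follow directly from the cyclic construction on each summand and then on the direct sum, with the domain characterization $\xi\in D(L)\Leftrightarrow h\cdot U(\xi)\in L^2(S\times\nat,d\mu)$ coming from passing to unbounded multiplication on each $L^2(S,d\mu_k)$. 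The main obstacle I expect is the cyclic decomposition step: producing mutually orthogonal cyclic subspaces that exhaust $L^2(\R^n)$ requires careful bookkeeping, as does the extension of the $C_0$-functional calculus to the Borel calculus and ultimately to the isometry onto $L^2(S,d\mu_k)$, together with ensuring that the domain of the unbounded multiplication $h$ pulled back through $U$ coincides precisely with $D(L)$ rather than any enlargement.
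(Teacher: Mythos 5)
The paper does not prove this statement: it is quoted verbatim as Theorem~2.5.1 of Davies \cite{davis}, so there is no in-paper argument to compare your proposal against. What you have written is a correct outline of the classical proof of the spectral theorem in multiplication-operator form (reduce to a bounded operator via the Cayley transform or resolvent, extract a continuous $C_0$ functional calculus, decompose the separable Hilbert space into an orthogonal sum of $L$-invariant cyclic subspaces via Zorn's lemma, map each cyclic piece unitarily onto $L^2(S,d\mu_k)$ via Riesz representation, assemble on $S\times\mathbb N$, and normalize the cyclic vectors so $\mu(S\times\mathbb N)=\sum_k 2^{-k}<\infty$), and this is in fact essentially the argument Davies gives in the cited reference. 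The two caveats you flag — exhaustiveness of the cyclic decomposition, and verifying that the pullback of the multiplication domain coincides exactly with $D(L)$ — are real but standard, and are handled precisely where you say they would be: the first by a maximality argument on families of mutually orthogonal cyclic subspaces (separability of $L^2(\mathbb R^n)$ guarantees the index set is at most countable), the second by showing that on a cyclic subspace the isometry $\varphi(L)v_k\mapsto\varphi$ already intertwines the graph norm of $L$ with that of multiplication by $h$ on $L^2(S,d\mu_k)$, so the domains match after taking closures.
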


 
 Thus, to define $L^{\alpha},\,\alpha\in(0,1)$, we proceed as in Theorem 4.3.3. of \cite{davis}, and the paragraph that precedes it
 $$
 D(L^{\alpha})=\{\xi\in L^2(\mathbb R^n)\,:\,h^{\alpha}U(\xi)\in  L^2(S\times \mathbb N,d\mu)\},
 $$ 
 with 
 $$
 h^{\alpha}\psi=UL^{\alpha}U^{-1}\psi,\;\;\;\forall\:\psi\in U(D(L^{\alpha})).
 $$
 We claim that for $\alpha\in(0,1)$ one has that $D(L)\subseteq D(L^{\alpha})$. This is because, $s^{\alpha}\leq s$ for $s\geq 1$ and both $s^{\alpha}, \,s$ are bounded for $s\in [0,1)$ 
 (see also \cite{re-si-I} last chapter, and \cite{fitz}).
 
 We now turn first to the special case where $c(x)=0$, i.e.
 $$
 L=\mathcal L=-\partial_{x_j}((a_{jk}(x)\partial_{x_k} \cdot).
 $$
 
 Define
 $$
 \mathcal L_{\alpha}=\mathcal L^{\alpha}+I,\;\;\;\alpha\in[0,1].
 $$
 Thus, if $m_{\alpha}(s)=s^{\alpha}+1,\;s\geq 0$, we have $\mathcal L_{\alpha}=m_{\alpha}(\mathcal L)$, so $\mathcal L_{\alpha}$ is in the spectral calculus of $\mathcal L$.
 \vskip.05in
 
 \underline{Claim} : $D(\mathcal L_{\alpha})=D(\mathcal L^{\alpha})$.

From  the above it follows : given  $f\in L^2(\mathbb R^n)$ one has that $$f \in D(\mathcal L^{\alpha}) \iff s^{\alpha}U(f)\in L^2(S\times \mathbb N,d\mu),$$
and
$$
f\in D(\mathcal L_{\alpha}) \iff m_{\alpha}(s)U(f)\in L^2(S\times \mathbb N,d\mu).
$$

We recall that $1\in L^2(S\times \mathbb N,d\mu)$, since it gives the identity. Thus, it is easy to see that
$$
s^{\alpha}U(f)\in L^2(S\times \mathbb N,d\mu) \iff (1+s^{\alpha})U(f)\in L^2(S\times \mathbb N,d\mu).
$$

We turn to our attention to $D(\mathcal L_{\alpha})$. For $\alpha=0$, $D(\mathcal L_{0})=L^2(\mathbb R^n)$ and for $\alpha=1$, $D(\mathcal L_{1})=D(L)=H^2(\mathbb R^n)$.

Let
$$
L_{2\alpha}^{\mathcal L}=\{f\in L^2(\mathbb R^n) : f\in D(\mathcal L_{\alpha})\}=\{f\in L^2(\mathbb R^n) :\|\mathcal L_{\alpha}f\|_2<\infty\}.
$$

Since
$$
\aligned
&\|\mathcal L_{\alpha}f\|_2^2=\langle (\mathcal L^{\alpha}+I)f,(\mathcal L^{\alpha}+I)f\rangle\\
&=\langle \mathcal L^{\alpha}f,\mathcal L^{\alpha} f\rangle+\langle f,f\rangle+2\langle \mathcal L^{\alpha}f,f\rangle
\endaligned
$$
and $\mathcal L^{\alpha}$ is a non-negative operator, since $\mathcal L$ is one to one, it follows that 
$$
\|f\|_2\leq \|\mathcal L_{\alpha}f\|_2.
$$

Consider next
$$
L_{2\alpha}^{\mathcal L,\sim}=\{f\in L^2(\mathbb R^n) :f=(\mathcal L_{\alpha})^{-1}v,\;v\in L^2(\mathbb R^n)\}.
$$
 \underline{Claim} : $L_{2\alpha}^{\mathcal L}=L_{2\alpha}^{\mathcal L,\sim}$.
 
 Assume that $g=\mathcal L_{\alpha}f\in L^2(\mathbb R^n)$, with $f\in L^2(\mathbb R^n)$. Then $(\mathcal L_{\alpha})^{-1}g=f$, since
 $$
 \aligned
 & U((\mathcal L_{\alpha})^{-1}g)=U(\mathcal L_{\alpha})^{-1}U^{-1}Ug=\frac{1}{h^{\alpha}+1}Ug=\frac{1}{h^{\alpha}+1}U\mathcal L_{\alpha}f\\
 &=\frac{1}{h^{\alpha}+1}U\mathcal L_{\alpha}U^{-1}Uf=\frac{(h^{\alpha}+1)}{(h^{\alpha}+1)}Uf=Uf.
 \endaligned
 $$
 Thus, $(\mathcal L_{\alpha})^{-1}g=f$. Hence, $L_{2\alpha}^{\mathcal L}\subseteq L_{2\alpha}^{\mathcal L,\sim}$.
 
 Assume now that $f\in L^2(\mathbb R^n)$ $f=(\mathcal L_{\alpha})^{-1}v,\,v\in L^2(\mathbb R^n)$. Then
  $$
 \aligned
 &(h^{\alpha}+1)Uf=(h^{\alpha}+1)U(\mathcal L_{\alpha})^{-1}v\\
 &=(h^{\alpha}+1)U(\mathcal L_{\alpha})^{-1}U^{-1}Uv=\frac{(h^{\alpha}+1)}{(h^{\alpha}+1)}Uv=Uv.
 \endaligned
 $$
 Thus, $\mathcal L_{\alpha}f\in L^2(\mathbb R^n)$. Hence, $L_{2\alpha}^{\mathcal L,\sim}\subseteq L_{2\alpha}^{\mathcal L}$, which completes the proof of the claim.
 \vskip.05in
 
 \underline{Remark}: Let $P_j(\mathcal L)f=f_j$, where $f\in L^2(\mathbb R^n)$ and
 $$
 UP_j(\mathcal L)U^{-1} \Psi=\chi_{[0,j]}(h)\Psi,\;\;\;\;\Psi \in L^2(S\times \mathbb N,d\mu).
 $$
 Since $\sigma(\mathcal L)\subseteq [0,\infty)$, one has that $f_j\to f$ in $L^2(\mathbb R^n)$ and
 $$
 \mathcal L^{\alpha}f_j \to \mathcal L^{\alpha}f\;\;\text{and} \;\;\mathcal L_{\alpha}f_j \to \mathcal L_{\alpha}f_j \;\;\text{in}\;\;L^2(\mathbb R^n).
 $$
 Since $D(\mathcal L)=H^2(\mathbb R^n)$,
 $$
\mathcal L P_j(\mathcal L)f\to \mathcal L f \;\;\text{in}\;\;L^2(\mathbb R^n), \;\;\text{for}\;\;\mathcal Lf\in L^2(\mathbb R^n).
$$
Moreover, the $H^2$-norm of $P_j(\mathcal L)f \big(\to f\big)$ is uniformly bounded by  the $H^2$-norm of $f$.

Define
$$
\mathcal D_{\mathcal L}=\{P_j(\mathcal L)f\,:\,f\in L^2(\mathbb R^n)\}.
$$
This is dense in $L^{\mathcal L}_{2\alpha}$ for $\alpha\in [0,1]$. Also, if $u\in \mathcal D_{\mathcal L}$, then $\mathcal Lu,\,u\in L^2(\mathbb R^n)$ and so $u\in H^2(\mathbb R^n)$.

We recall that $(\mathcal L)^{i\gamma},\,\gamma\in\mathbb R$  are uniformly bounded in $\gamma$ as operators in $L^2(\mathbb R^n)$, since $|(h)^{i\gamma}|\leq 1,\;h> 0$. Using this and complex interpolation of the weighted $L^2$ space $\int_{S\times \mathbb N}|\Psi|^2(h^{2\lambda}+1)d\mu$,  we have
$$
\Big[L^{\mathcal L}_{\alpha_1};L^{\mathcal L}_{\alpha_2}\Big]_{\theta}=L^{\mathcal L}_{\theta\alpha_1+(1-\theta)\alpha_2},
$$
where $[\,;\,]_{\theta}$ is the complex interpolation space.

Let us prove : 

If $\alpha\in [0,1]$, then $\,H^{2\alpha}(\mathbb R^n)\subseteq L^{\mathcal L}_{\alpha}$ with $\|f\|_{L^{\mathcal L}_{\alpha}}\leq c\|f\|_{2\alpha,2}$.

\vskip.05in

We know that $f\in H^{2\alpha}(\mathbb R^n)\iff f=(I-\Delta)^{-\alpha}g,\;g\in L^2(\mathbb R^n)$. Assume first that $g\in H^2(\mathbb R^n)$. Note that for $\alpha \in [0,1]$, $D(\mathcal L+I)\subseteq D(\mathcal  L_{\alpha})$, since for $h\geq 0$, $h^{\alpha}+1\leq c(h+1)$, and $D(\mathcal L+I)=H^{2}(\mathbb R^n)$.

Let  $\varphi \in \mathcal D=C^{\infty}_0(\mathbb R^n)$, $z\in \mathbb C$ with $0\leq Re(z)\leq 1$, and define
$$
F(z)=\int((\mathcal L)^z+I)(-\Delta+I)^{-z}(g)\,\overline{\varphi}\,dx.
$$

Note that $(-\Delta+I)^{-z}(g)\in H^2(\mathbb R^n)$, $0\leq Re(z)\leq 1$, since $g\in H^2(\mathbb R^n)$ so $
((\mathcal L)^z+I)(-\Delta+I)^{-z}(g)\in L^2(\mathbb R^n)$, because $D(\mathcal L+I)\subseteq D(\mathcal  L_{\alpha})$. Hence, $F(z)$ is analytic for $z\in \mathbb C$ with $0\leq Re(z)\leq 1$. When $z=0+i\gamma$
$$
\aligned
&|F(i\gamma)|\leq \|(\mathcal L)^{i\gamma}+I)(-\Delta+I)^{-i\gamma}(g)\|_2\|\varphi\|_2\\
&\leq c\|g\|_2\|\varphi\|_2=c\|f\|_{2\alpha,2}\|\varphi\|_2,
\endaligned
$$
and when $z=1+i\gamma$
$$
\aligned
&|F(1+i\gamma)|\leq \|(\mathcal L)^{1+i\gamma}+I)(-\Delta+I)^{-(1+i\gamma)}(g)\|_2\|\varphi\|_2\\
&\leq c\|(-\Delta+I)^{-(1+i\gamma)}(g)\|_{2,2}\|\varphi\|_2\leq c\|g\|_2\|\varphi\|_2=c\|f\|_{2\alpha,2}\|\varphi\|_2,
\endaligned
$$

Thus, by the three lines theorem 
$$
|F(\alpha)|\leq c\|f\|_{2\alpha,2}\|\varphi||_2.
$$
Taking the supremum over all $\varphi \in \mathcal D$ we get
$$
\|\mathcal L_{\alpha}f\|_2\leq c \|f\|_{2\alpha,2},\;\;\;\forall f=(-\Delta+I)^{-\alpha}(g),\;\;g\in H^2(\mathbb R^n),
$$
where $c$ above is a universal constant. For general $f\in H^{2\alpha}(\mathbb R^n)$, $f=(-\Delta+I)^{-\alpha}g,\,g\in L^2(\mathbb R^n)$, we choose $g_j\to g $ in $L^2(\mathbb R^n)$, $g_j\in H^2(\mathbb R^n)$, let $f_j=-\Delta+I)^{-\alpha}g_j$. Thus, for $j,j'$ we have
$$
\|\mathcal L_{\alpha}(f_j-f_{j'})\|_2\leq c\|f_j-f_{j'}\|_{2\alpha,2},
$$
and 
$$
\|\mathcal L_{\alpha}(f_j)\|_2\leq c\|f_j\|_{2\alpha,2}.
$$
Hence, $\mathcal L_{\alpha}(f_j)\to \tilde f$ in $L^{\mathcal L}_{2\alpha}$, with $\|\mathcal L_{\alpha}(\tilde f)\|_2\leq c\|f\|_{2\alpha,2}$. But, since $f_j\to f$ in $H^{2\alpha}(\mathbb R^n)$ and $\|h\|_2\leq c\|\mathcal L_{\alpha}h\|_2$, we have $\tilde f=f$, and we get the desired result $\,H^{2\alpha}(\mathbb R^n)\subseteq L^{\mathcal L}_{2 \alpha}$.

We now turn to the proof of  $L^{\mathcal L}_{2\alpha,2}\subseteq H^{2\alpha}(\mathbb R^n)$ for $\alpha\in [0,1]$, and $\|f\|_{2\alpha,2}\leq c\|f\|_{L^{\mathcal L}_{2\alpha}}$. Here, if we want to "repeat" the argument, we run into a difficulty. We would like to embed $(\mathcal L_{\alpha})^{-1}$ into an analytic family, the natural choice is $((\mathcal L)^z+I)^{-1}$. But if $z=a+ib$ with $a\in [0,1]$ and $b\in \mathbb R$, we could have $h^{a+ib}+1$ nearly $0$, since
$$
h^{a+ib}=h^a(\cos(b\log(h))+i\sin(b\log(h)),
$$ 
so if $\log(h)=\epsilon, \;b\epsilon=\pi,\,\cos((\pi)=-1,\,\sin(\pi)=0$, so we run into a problem. To solve it we recall that 
$$
L^{\mathcal L}_{2\alpha}=\{f\in L^2(\mathbb R^n)\,:\,\mathcal L^{\alpha}f\in L^2(\mathbb R^n)\}.
$$

\underline{Claim} : $L^{\mathcal L}_{2\alpha}=\{f\in L^2(\mathbb R^n)\,:\;(\mathcal L +1)^{\alpha}f\in L^2(\mathbb R^n)\}$, where $(\mathcal L +1)^{\alpha}=m_{\alpha}(\mathcal L)$, 
with $m_{\alpha}(h)=(h+1)^{\alpha}$, i.e. we define $(\mathcal L +1)^{\alpha}$ through the functional calculus of $\mathcal L$, \underline {not} through the functional calculus of 
$\mathcal L+I$. We have not shown that both coincide, so we proceed as described above.

To prove the claim, since $\mathcal L_{\alpha}$ and $m_{\alpha}(\mathcal L)$ are both defined through the functional calculus of $\mathcal L$, it suffices to show that, for $h>0$, one has $(1+h)^{\alpha}/(1+h^{\alpha})\sim 1$ which is obvious. Similarly, we define $(\mathcal L+1)^z$ by the functional calculus of $\mathcal L$, using $(h+1)^z, \,h>0$, which never vanishes.

In the rest of the proof, we are always using the functional calculus of $\mathcal L$.

Let $f\in L_{2\alpha}^{\mathcal L}$, so 
$$
\;\exists \,f\in L^2(\mathbb R^n) \;\;\text{s.t.}\;\;f=(\mathcal L+I)^{-\alpha}(g).
$$
Let  $g_j=\chi_{[0,j]}(\mathcal L)(g)$, so that $g_j\to g$ in $L^2(\mathbb R^n)$. Let $f_j=(\mathcal L+I)^{-\alpha}(g_j)$, so $f_j\to f$ in $L_{2\alpha}^{\mathcal L}$.  Fix $j$,  pick $\varphi \in 
 \mathcal D$, and consider
 $$
 F_j(z)=\int(-\Delta+I)^z((\mathcal L+I)^{-z}g_j)\overline{\varphi},\;\;\;\Re(z)\in [0,1].
 $$
Since $(\mathcal L+I)(\mathcal L+I)^{-z}g_j=(\mathcal L+I)^{1-z}g_j\in L^2(\mathbb R^n)$, one has that $(\mathcal L+I)^{-z}g_j\in H^2(\mathbb R^n)$. Hence, $F_j(z)$ is well defined.

If $z=i\gamma$, then
$$
|F_j(i\gamma)|\leq \|g_j\|_2 \|\varphi\|_2.
$$
 
 If $z=1+i\gamma$, then
$$
 \aligned
& |F_j(1+i\gamma)|\leq c\|(-\Delta+I)^{1+i\gamma}(\mathcal L+I)^{-(1+i\gamma)}(g_j)\|_2\|\varphi\|_2\\
&\leq c \|(-\Delta+I)^{1}(\mathcal L+I)^{-(1+i\gamma)}(g_j)\|_2\|\varphi\|_2\\
&=c \|(\mathcal L+I)^{-(1+i\gamma)}(g_j)\|_{2,2}\|\varphi\|_2\\
&\leq c \|(\mathcal L+I)(\mathcal L+I)^{-(1+i\gamma)}(g_j)\|_{2}\|\varphi\|_2\leq c\|g_j\|_2\|\varphi\|_2.
\endaligned
$$
By the three lines theorem, taking supremum over all $\varphi\in \mathcal D$ with $\|\varphi\|_2\leq 1$, we get
$$
\|(-\Delta+I)^{\alpha}(\mathcal L+I)^{-\alpha}(g_j)\|_2\leq c\|g_j\|_2.
$$
We recall that $f_j=(\mathcal L+I)^{-\alpha}(g_j)$, which belongs to $H^2(\mathbb R^n)$ since $(\mathcal L+I)f_j\in L^2(\mathbb R^n)$. Hence,
$$
\|f_j\|_{2\alpha,2}\leq c \|g_j\|_2.
$$

Since $g_j \to g $ in $L^2(\mathbb R^n)$, using the above inequality for $g_j-g_{j'}$, we see that $f_j$ is uniformly bounded in $H^{2\alpha}(\mathbb R^n)$ and converges to $\tilde f$ in $H^{2\alpha}(\mathbb R^n)$. But $f_j=(\mathcal L+I)^{-\alpha}(g_j)$ so $f_j\to f$ in $L^2(\mathbb R^n)$. Then $f=\tilde f$, and the proof of Theorem \ref{2.1} completed, when $c\equiv 0$. The case $c=c(x)\geq 0$ follows in the same manner.

 \end{proof}

\subsection{Extension of   Theorem \ref{2.1} to the values $\alpha >1$}

 
 \begin{corollary}
\label{abcd}
If the coefficients $a_{jk}$'s and $c$ satisfy the hypothesis \eqref{hyp1}-\eqref{hyp3}, \eqref{asymp} and \eqref{J}  with $\text{J}\geq 2\kappa-1,\,\kappa\in \mathbb N$, then for any $\alpha\in [0,\kappa]$ it follows that
\begin{enumerate}
\item 
$ L^{\alpha}$ is self-adjoint and unbounded on $L^2(\mathbb R^n)$,
\vskip.05in
\item
the domain of $ L^{\alpha}$ is $H^{2\alpha}(\mathbb R^n)=(1-\Delta)^{-\alpha}L^2(\mathbb R^n)$,
\vskip.05in
\item
\begin{equation}
\label{r3}
\|f\|_{2\alpha,2}=\|(1-\Delta)^{\alpha}f\|_2\sim \|f\|_2+\|L^{\alpha}f\|_2,\;\;\;\forall f\in H^{2\alpha}(\mathbb R^n).
\end{equation}

\end{enumerate}

\end{corollary}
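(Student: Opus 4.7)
The plan is to prove Corollary \ref{abcd} in two stages: first establish the three statements for integer values $\alpha = \kappa \in \mathbb{N}$ by induction on $\kappa$, using the base case $\kappa = 1$ from Theorem \ref{2.1}; then handle general $\alpha \in [0,\kappa]$ by complex interpolation between consecutive integers, essentially reusing the three-lines-theorem machinery that appears in the proof of Theorem \ref{2.1}. Self-adjointness is automatic at every step because the spectral calculus of the self-adjoint operator $L$ applied to the real-valued Borel function $h \mapsto h^\alpha$ on $\sigma(L) \subset [0,\infty)$ always produces a self-adjoint operator.

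For the integer induction, the crucial intermediate fact (the content of Lemma \ref{2.9} as announced in the paper) is that the spectral-calculus operator $L^\kappa$ coincides with the $\kappa$-fold composition $L \circ L \circ \cdots \circ L$ on a common dense domain. On the dense subspace $\mathcal{D}_L = \{\chi_{[0,j]}(L)f : f \in L^2,\, j \in \mathbb{N}\}$ introduced in the proof of Theorem \ref{2.1}, each $\chi_{[0,j]}(L)f$ lies in the domain of every polynomial of $L$ in the spectral calculus, and the identity $h^\kappa = h \cdot h^{\kappa-1}$ transfers through the unitary $U$ of Theorem \ref{spec-repr}. The hypothesis $J \geq 2\kappa - 1$ is exactly what is required for the compositional expression to make classical sense, since expanding $L^\kappa v$ produces at most $2\kappa$ derivatives on $v$ and at most $2\kappa - 1$ derivatives on the coefficients $a_{jk}$ (respectively $2\kappa - 2$ on $c$).

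Granted this identification, the inductive step runs as follows. Assume $D(L^{\kappa-1}) = H^{2(\kappa-1)}(\mathbb{R}^n)$ with the equivalence \eqref{r3} for $\alpha = \kappa-1$. If $f \in H^{2\kappa}(\mathbb{R}^n)$, then $L^{\kappa-1}f \in H^2$ with $\|L^{\kappa-1}f\|_{2,2} \lesssim \|f\|_{2\kappa,2}$ by direct differentiation using $J \geq 2\kappa - 1$, and applying Theorem \ref{2.1} to $g := L^{\kappa-1}f$ yields $L^\kappa f = Lg \in L^2$ with $\|L^\kappa f\|_2 \lesssim \|f\|_{2\kappa,2}$. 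Conversely, if $L^\kappa f \in L^2$, then since the spectral-calculus inequality $h^{\kappa-1} \leq 1 + h^\kappa$ gives $L^{\kappa-1}f \in L^2$ as well, Theorem \ref{2.1} applied to $g = L^{\kappa-1}f$ yields $g \in H^2$. Now a bootstrap using the inductive hypothesis on $f$ (which gives $f \in H^{2(\kappa-1)}$ once $L^{\kappa-1}f$ is controlled) combined with Schauder-type elliptic regularity in the spirit of \eqref{res1}--\eqref{final11} produces $f \in H^{2\kappa}$ with $\|f\|_{2\kappa,2} \lesssim \|f\|_2 + \|L^\kappa f\|_2$, completing the inductive step.

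For non-integer $\alpha \in [\kappa_0, \kappa_0+1]$ with $\kappa_0 \in \{0,1,\ldots,\kappa-1\}$, I would define, for a spectral truncation $g_j = \chi_{[0,j]}(L)g$ and a test function $\varphi \in \mathcal{D}$,
\begin{equation*}
F_j(z) = \int (1-\Delta)^{\kappa_0 + z}\bigl((L+I)^{-(\kappa_0+z)} g_j\bigr)\,\overline{\varphi}\,dx, \qquad \Re z \in [0,1],
\end{equation*}
which is analytic in the strip and well-defined since $(L+I)^{-(\kappa_0+z)} g_j$ lies in $H^{2(\kappa_0+1)}$ for $j$ fixed. The integer endpoints $\Re z = 0$ and $\Re z = 1$ are controlled by the already-proved integer cases at $\kappa_0$ and $\kappa_0+1$, together with the uniform $L^2$-boundedness of $(L+I)^{i\gamma}$ and $(1-\Delta)^{i\gamma}$ (since $|(h+1)^{i\gamma}| = 1$). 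Applying the three lines theorem and reversing the roles of $L$ and $-\Delta$ gives both directions of the equivalence \eqref{r3} for $\alpha = \kappa_0 + \theta$, which also identifies $D(L^\alpha)$ with $H^{2\alpha}(\mathbb{R}^n)$.

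The main obstacle will be the identification of $L^\kappa$ from functional calculus with the pointwise $\kappa$-fold composition of the differential operator, and in particular pinning down precisely how $J \geq 2\kappa - 1$ controls the coefficient-regularity losses at each iteration; once this is settled, the rest follows by assembling the already-established techniques from the proof of Theorem \ref{2.1}.
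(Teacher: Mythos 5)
Your proposal follows essentially the same route as the paper: first treat integer $\alpha = m \leq \kappa$ (via the identification of the spectral-calculus power $L^m$ with the $m$-fold composition of the differential operator $L$), then interpolate between consecutive integers with a three-lines-theorem argument patterned on the proof of Theorem~\ref{2.1}. You correctly flag the identification $(L)^m = L^m$ (the paper's Lemma~\ref{2.9}) as the crux, and the induction on $\kappa$ is the right skeleton.

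Two points where your sketch is thinner than what is actually needed. First, the elliptic regularity bootstrap in your inductive step --- passing from ``$L^{\kappa-1}f \in L^2$, $L^{\kappa}f \in L^2$'' to ``$f \in H^{2\kappa}$'' --- is not captured by the Schauder estimates cited in \eqref{res1}--\eqref{final11}, which are the $H^2$-level case. The paper isolates the needed statement as Proposition~\ref{AB1} ($u \in L^2$, $Lu \in H^m$ implies $u \in H^{m+2}$ with the corresponding bound) and proves it by a commutator argument with a frequency truncation $\theta_N$: one observes that $[L, \partial_x \theta_N]$ is a second-order operator uniformly in $N$ and lets $N \uparrow \infty$. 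This is more delicate than a direct Schauder bound because one only has distributional control of $Lu$. Second, the identification $(L)^m = L^m$ cannot be settled merely by checking that $h^m = h \cdot h^{m-1}$ transports through the unitary $U$ on the dense set $\mathcal{D}_L$; one must also verify that the minimal closure of the differential composition $L \cdots L$ on $C_0^\infty$ has the \emph{same} domain $H^{2m}(\mathbb{R}^n)$ as the spectral $L^m$, which is where Proposition~\ref{AB1} together with Yosida's operational-calculus theorem (the paper's Theorem~\ref{oper-calc}, $g(H)f(H)x = (gf)(H)x$) enters. Once those two ingredients are in place, the interpolation step you sketch --- truncating $g_j = \chi_{[0,j]}(L)g$, forming the analytic family, and running both directions --- matches the paper, modulo the paper's technical care that $(L+1)^z$ is defined through the functional calculus of $L$ (not of $L+I$), which sidesteps the vanishing of $h^z + 1$ when $\Re z \in (0,1)$.
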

First, we shall prove Corollary \ref{abcd} in the case $\alpha=\kappa, \;\kappa\in\mathbb N$. Thus, we need to show that $L^{\kappa},$ is self-adjoint, i.e. $D_{min}(L^{\kappa})= D_{max}(L^{\kappa})$. From the spectral calculus of $L$ one has that $L^{\kappa}$ is self-adjoint. However, to prove \eqref{r3} we need to show that $(L)^{\kappa}=L.....L$ $\;\kappa$-times.

 Using this result,  we shall extend it to all values $\alpha\geq 0$. Latter, we shall also  give a different proof of the estimate \eqref{r3}.


\begin{definition}
\label{2.8}
For $\alpha>1$, let $\kappa$ be such that $\alpha\leq \kappa,\,\kappa\in \mathbb N$. Let $J\geq 2{\kappa}-1$ in \eqref{J}. For $m\in \mathbb N$ with $m\leq \kappa$,
$$
(L)^m=L...L\;\;\;\;m-\text{times},
$$ while for any $\alpha>1$, $L^{\alpha}$ will be given by the spectral calculus of $L$.
\end{definition}

\begin{lemma}
\label{2.9}
Let $\kappa\in \mathbb N$. Let $J>2\kappa-1$ in \eqref{J}. For $m\in \mathbb N$ with $m\leq \kappa$, $(L)^{m}$ is self-adjoint, with 
$$
L^m=(L)^m=L...L,
$$
and
$$
D_{min}((L)^{m})=D(L^m)=H^{2m}(\mathbb R^n).
$$
\end{lemma}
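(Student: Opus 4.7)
\emph{Proof proposal.} I would argue by induction on $m$, the base case $m = 1$ being supplied directly by Theorem \ref{2.1}: there the self-adjointness of $L$, the identification $D(L) = H^{2}(\mathbb R^n)$, and the norm equivalence \eqref{r1} are already established, and $(L)^1 = L$ coincides trivially with the spectral power. Assume the conclusion of the lemma for all indices strictly below $m$, so that $(L)^{j} = L^{j}$ (the right-hand side defined via the functional calculus of Theorem \ref{spec-repr}) is self-adjoint with domain $H^{2j}(\mathbb R^n)$ for every $j < m$.

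For the inductive step, the spectral factorization $h^m = h \cdot h^{m-1}$ yields
\begin{equation*}
f \in D(L^m) \iff f \in D(L^{m-1}) \text{ and } L^{m-1} f \in D(L),
\end{equation*}
which by the inductive hypothesis becomes: $f \in H^{2(m-1)}(\mathbb R^n)$ together with $(L)^{m-1} f \in H^{2}(\mathbb R^n)$. The task therefore reduces to proving the equivalence
\begin{equation*}
\bigl(f \in H^{2(m-1)}(\mathbb R^n) \text{ and } (L)^{m-1} f \in H^{2}(\mathbb R^n)\bigr) \iff f \in H^{2m}(\mathbb R^n).
\end{equation*}
The forward direction is elementary: under the hypothesis $J \geq 2m - 1$, distributing derivatives in $Lg$ shows that $L$ maps $H^{k+2}(\mathbb R^n)$ continuously into $H^{k}(\mathbb R^n)$ for every $0 \le k \le 2m - 2$, and $m - 1$ such iterations carry $H^{2m}$ down through $H^{2m-2}, H^{2m-4}, \dots$ to $H^{2}$.

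The delicate converse I would prove through the graded family of elliptic estimates
\begin{equation*}
\|g\|_{2k+2,2} \lesssim \|L g\|_{2k,2} + \|g\|_2, \qquad 0 \le k \le m-1,
\end{equation*}
whose base case $k = 0$ is \eqref{r1}. The inductive step on $k$ follows by differentiating $Lg = h$ and applying the previous-level estimate to $\partial^{\beta} g$ with $|\beta| \le 2k$, after controlling the commutator $[L, \partial^{\beta}]$---a differential operator of order at most $2k + 1$ whose coefficients involve derivatives of $a_{jk}$ up to order $2k + 1$ and of $c$ up to order $2k$, all bounded in $L^\infty$ thanks to $J \geq 2\kappa - 1 \geq 2m - 1$. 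With this family in hand, set $g_j := (L)^{m-1-j} f$, so that $L g_{j+1} = g_j$ by associativity of compositions; the estimate at level $k = j+1$ then promotes $g_j \in H^{2(j+1)}$ to $g_{j+1} \in H^{2(j+2)}$, and starting from $g_0 = (L)^{m-1} f \in H^{2}$ and iterating to $j = m - 2$ delivers $f = g_{m-1} \in H^{2m}(\mathbb R^n)$.

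Identification of $(L)^m$ with $L^m$ as operators (not merely as domains), and hence self-adjointness of $(L)^m$, is then obtained by density: given $f \in H^{2m}$, the spectral cutoffs analogous to those of $\mathcal D_{\mathcal L}$ in Section 2 approximate $f$ in the graph norm, and on such cutoffs composition and spectral power trivially agree, so the graph-norm equivalence just established yields $(L)^m f = L^m f$ in the limit. The principal obstacle I anticipate is the careful verification of the commutator bound at each level $k$, ensuring that $J \geq 2\kappa - 1$ supplies exactly the $L^\infty$-control on derivatives of coefficients needed to sustain the iteration up to $k = m - 1$.
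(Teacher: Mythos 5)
Your proposal follows essentially the same route as the paper: the graded elliptic estimate you describe is exactly the paper's Proposition \ref{AB1} (proved there by commuting $L$ with mollified first-order derivatives $\partial_x\theta_N$, gaining one derivative per step so that the relevant commutator stays second-order uniformly in $N$), and your spectral-factorization step $h^m = h\cdot h^{m-1}$ is precisely the paper's invocation of Yosida's operational-calculus result (Theorem \ref{oper-calc}) with $f(\lambda)=\lambda^{m-1}$, $g(\lambda)=\lambda$. The paper merely organizes the argument into explicit claims --- closedness of $(L)^m$ on $D_{\min}$, the norm equivalence first on $\mathcal D$ and then on $D_{\min}$, and the composition-domain identification --- before running the same induction, so the two proofs are the same in substance.
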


In order to prove Lemma \ref{2.9}, we need a preliminary result.

\begin{proposition}
\label{AB1}
For $m=0,1,2,....$ assume that the coefficients of $L$ satisfy the hypothesis \eqref{J} with $J\geq m+1$. If $u\in L^2(\mathbb R^n)$, with $Lu$ as a distribution in $\mathcal D'(\mathbb R^n)$ belongs to $H^m(\mathbb R^n)$, then $u\in H^{m+2}(\mathbb R^n)$ and
$$
\|u\|_{m+2,2}\leq c_m(\|Lu\|_{m,2}+\|u\|_2).
$$

\end{proposition}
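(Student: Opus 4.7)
The plan is to argue by induction on $m$. The base case $m=0$ is the statement that $u\in L^{2}(\mathbb R^{n})$ together with $Lu\in L^{2}(\mathbb R^{n})$ (in $\mathcal{D}'$) forces $u\in D_{\max}(L)=H^{2}(\mathbb R^{n})$, with the norm equivalence $\|u\|_{2,2}\lesssim \|Lu\|_{2}+\|u\|_{2}$; this is precisely the content of Theorem \ref{2.1} at $\alpha=1$, together with the identification $D_{\min}(L)=D_{\max}(L)=H^{2}(\mathbb R^{n})$ established in its proof and the estimate \eqref{r1}.

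For the inductive step, suppose the statement has been established at level $m-1$ and take $u\in L^{2}(\mathbb R^{n})$ with $f:=Lu\in H^{m}(\mathbb R^{n})$. Since $H^{m}\hookrightarrow H^{m-1}$, the inductive hypothesis applied to $u$ already produces $u\in H^{m+1}(\mathbb R^{n})$ with $\|u\|_{m+1,2}\lesssim \|Lu\|_{m-1,2}+\|u\|_{2}$. Fix $l\in\{1,\ldots,n\}$ and set $v:=\partial_{x_{l}}u\in H^{m}(\mathbb R^{n})\subset L^{2}(\mathbb R^{n})$. A direct computation in $\mathcal{D}'(\mathbb R^{n})$ gives
$$
Lv \;=\; \partial_{x_{l}}f \,-\, [\partial_{x_{l}},L]\,u,
\qquad
[\partial_{x_{l}},L]\,u \;=\; -\partial_{x_{k}}\bigl((\partial_{x_{l}}a_{jk})\,\partial_{x_{j}}u\bigr) \,+\, (\partial_{x_{l}}c)\,u.
$$
Expanding the divergence, the commutator is a sum of pointwise products of one of $\partial_{x_{l}}a_{jk}$, $\partial_{x_{k}}\partial_{x_{l}}a_{jk}$ or $\partial_{x_{l}}c$ with one of $\partial_{x_{j}}u$, $\partial_{x_{j}}\partial_{x_{k}}u$ or $u$. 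Under \eqref{J} with $J\geq m+1$, these coefficients lie in $W^{m,\infty}$, $W^{m-1,\infty}$, and $W^{m-1,\infty}$ respectively, while $u\in H^{m+1}$ yields $\partial u\in H^{m}$, $\partial^{2}u\in H^{m-1}$ and $u\in H^{m-1}$. The Sobolev product inequality $\|fg\|_{k,2}\lesssim \|f\|_{W^{k,\infty}}\|g\|_{k,2}$ then delivers $[\partial_{x_{l}},L]\,u\in H^{m-1}(\mathbb R^{n})$ with $\|[\partial_{x_{l}},L]u\|_{m-1,2}\lesssim \|u\|_{m+1,2}$. Since $\partial_{x_{l}}f\in H^{m-1}(\mathbb R^{n})$ as well, we conclude $Lv\in H^{m-1}(\mathbb R^{n})$.

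Applying the inductive hypothesis now to $v$ yields $\partial_{x_{l}}u=v\in H^{m+1}(\mathbb R^{n})$ together with $\|v\|_{m+1,2}\lesssim \|Lv\|_{m-1,2}+\|v\|_{2}$. Doing this for each $l=1,\ldots,n$ upgrades $u$ to $H^{m+2}(\mathbb R^{n})$, and chaining the bounds above with the inductive estimate on $u$ itself produces the desired $\|u\|_{m+2,2}\lesssim \|Lu\|_{m,2}+\|u\|_{2}$.

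The main point that requires care is the bookkeeping of where the derivatives fall when estimating the commutator: all ``extra'' derivatives must be absorbed by the coefficients, which is possible exactly because $J\geq m+1$ places $\partial^{2}a_{jk}$ in $W^{m-1,\infty}$ and $\partial c$ in $W^{m-1,\infty}$. A minor secondary issue is the apparent circularity of invoking the inductive hypothesis twice, first to know that $u\in H^{m+1}$ so that the commutator is a well-defined $H^{m-1}$ distribution, and then on $v=\partial_{x_{l}}u$; this is harmless but, if one prefers, it can be sidestepped by replacing $\partial_{x_{l}}$ throughout with a difference quotient $D^{h}_{l}$, deriving uniform $H^{m+1}$ estimates on $D^{h}_{l}u$, and passing to the limit $h\to 0$ as in Evans, Section 6.3.
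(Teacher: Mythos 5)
Your proof is correct and takes essentially the same inductive-plus-commutator approach as the paper: both exploit that $[\partial_{x_l},L]$ is a second-order operator whose coefficients lie in $W^{m-1,\infty}$ (or better) when $J\geq m+1$, and both apply the inductive hypothesis to a first-derivative quantity. The only technical variation is the regularization device: the paper applies the inductive hypothesis to the smooth truncation $\partial_x\theta_N u$ and passes to the limit $N\to\infty$ via a uniform bound, whereas you first apply the level-$(m-1)$ hypothesis to $u$ itself (gaining $u\in H^{m+1}$ so that $\partial_{x_l}u\in L^2$ and the commutator is an honest $H^{m-1}$ function) and then apply it to $v=\partial_{x_l}u$. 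Your worry about circularity is unfounded --- the level-$(m-1)$ hypothesis is a universal statement, and applying it to two different functions is legitimate --- and the difference-quotient alternative you sketch plays exactly the role the paper's $\theta_N$ does.
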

\begin{proof} [Proof of Proposition \ref{AB1}] By induction in $m$. Case $m=0$. This follows by Theorem \ref{2.1} in the case $\alpha=1$, which we have already proved.

Assume now that if for some $m\geq 0$, $u\in L^2(\mathbb R^n),\,Lu\in H^m(\mathbb R^n)$, then $u\in H^{2+m}(\mathbb R^n)$ and
$$
\|u\|_{m+2,2}\leq c_m(\|Lu\|_{m,2}+\|u\|_2).
$$

We will show that this holds also for $m+1$, i.e.  if $u\in L^2(\mathbb R^n),\,Lu\in H^{m+1}(\mathbb R^n)$, then $u\in H^{3+m}(\mathbb R^n)$ and
\begin{equation}
\label{91}
\|u\|_{m+3,2}\leq c_m(\|Lu\|_{m+1,2}+\|u\|_2).
\end{equation}

Let $\theta_N$ be a regularizing operator of order zero, i.e. a smooth frequency truncation for $|\xi|\leq N$. Let $\partial_{x}$ be a first order differential operator. Then by induction hypothesis
$$
\aligned
&\|\partial_x\theta_Nu\|_{m+2,2}\leq c(\|L(\partial_x\theta_Nu)\|_{m,2}+\|\partial_x\theta_N u\|_2)\\
&\leq c(\| [L\partial_x\theta_N-\partial_x\theta_NL]u\|_{m,2}+\|\partial_x\theta_NLu\|_{m,2}+\|u\|_2),
\endaligned
$$
 where $\partial_x\theta_N\cdot=\partial_x(\theta_N\cdot)$. 
 But $L\partial_x\theta_N-\partial_x\theta_NL$ is an operator of order two, uniformly in $N$. Hence, uniformly in $N$, one has (since $J\geq m+2$)
 $$
 \|\partial_x\theta_Nu\|_{m+2,2}\leq c(\|u\|_{2+m,2}+\|Lu\|_{m+1,2}+\|u\|_2),
 $$
 and by the induction hypothesis 
 $$
 \|u\|_{m+2,2}\leq c(\|Lu\|_{m,2}+\|u\|_2)\leq c(\|Lu\|_{m+1,2}+\|u\|_2).
 $$
Therefore, we can conclude that $u\in H^{3+m}(\mathbb R^n)$ and \eqref{91} holds.

\end{proof}

\begin{proof} [Proof of Lemma \ref{2.9}]

We consider $(L)^m=L...L$, $\;m$-times, with domain
$$
D_{min}((L)^m)=\{f\in L^2(\mathbb R^n) : \exists\, f_j\in \mathcal D,\;f_j\to f \,\,\text{and}\,\,(L)^mf_j\to g \;\text{in}\;L^2\}.
$$

\underline{Claim 1}: $(L)^m$ with domain $D_{min}((L)^m)$ is a closed operator, i.e. if $\{f_k\}\subseteq D_{min}((L)^m)$ with
$$
f_k\to f\;\;\;\text{and}\;\;\;(L)^mf_k\to g\;\;\;\text{in}\;\;L^2(\mathbb R^n),
$$
then $f\in D_{min}((L)^m)$ and $(L)^m f=g$. Moreover, this $g$ is unique.

This proof is similar to that given for  claim 1 in the proof of Theorem \ref{2.1} for the case $\alpha=1$. Hence, it will be omitted.

\underline{Claim 2}: For $f\in \mathcal D$, we have
$$
\|f\|_{2m,2}\sim \|(L)^mf\|_2+\|f\|_2.
$$
 
 Since $J>2\kappa-1$ and $m\leq k$, one has that
 $$
\|(L)^mf\|_2+\|f\|_2\lesssim \|f\|_{2m,2}.
$$ 
The opposite inequality is already known for  the case $m=1$ (Theorem \ref{2.1}). Now using Proposition \ref{AB1} for $f\in \mathcal D$ one has
$$
\|f\|_{m+2,2}\lesssim \| Lf\|_{m,2}+\|f\|_2.
$$
Since  $f\in \mathcal D$ iterating the argument one has that 
$$
\aligned
\|f\|_{2m,2} &\lesssim \| Lf\|_{2m-2,2}+\|f\|_2\lesssim \|(L)^2f\|_{2m-4,2}+\|f\|_2\\
&\lesssim \| (L)^3f\|_{2m-6,2}+\|f\|_2\lesssim ...\lesssim \|(L)^mf\|_{2}+\|f\|_2,
\endaligned
$$
which yields Claim 2. 

We remark that a different proof of Claim 2 follows by the proof of \eqref{r3} given below.

\underline{Claim 3}: For $f\in D_{min}((L)^m)$, we have
$$
\|f\|_{2m,2}\sim \|(L)^mf\|_2+\|f\|_2.
$$

This clearly will imply that $D_{min}((L)^m)=H^{2m}(\mathbb R^n)$, see the comments at the end of the proof of Lemma \ref{2.9}.

Let $(f_j)\subseteq \mathcal D$ be such that 
$$
f_j\to f\;\;\;\text{and}\;\;\; (L)^mf_j\to (L)^mf\;\;\;\;\text{in}\;\;\;L^2(\mathbb R^n).
$$

We first show that $\{f_j\,:\,j\in\mathbb N\}$ is a Cauchy sequence in $H^{2m}(\mathbb R^n)$, since 
$$
\|f_j-f_k\|_{2m,2}\lesssim \|(L)^m(f_j-f_k)\|_2+\|(f_j-f_k)\|_2.
$$
Hence, $f_j\to \tilde{f}$ in $H^{2m}(\mathbb R^n)$. But  since $f_j\to f$ in $L^2(\mathbb R^n)$ it follows that $\tilde f=f$. Thus
$$
\aligned
\|f\|_{2m,2}&=\lim_{j\to \infty}\|f_j\|_{2m,2}\\
&\lesssim \lim_{j\to \infty}\big( \|(L)^mf_j\|_{2}+\|f_j\|_2\big)= \|(L)^mf\|_2+\|f\|_2.
\endaligned
$$
Conversely,
$$
\aligned
 \|(L)^mf\|_2+\|f\|_2&=\lim_{j\to \infty}\big( \|(L)^mf_j\|_2+\|f_j\|_2\big)\\
 &\lesssim \lim_{j\to \infty} \|f_j\|_{2m,2}= \|f\|_{2m,2},
 \endaligned
 $$
and the Claim 3 follows.

\underline{Claim 4}: Defining the composition domain of $(L)^m$, $D_{comp}((L)^m)$, as
$$
D_{comp}((L)^m)=\{x\in D_{comp}((L)^{m-1})\,:\,(L)^{m-1}x\in D(L)=H^2(\mathbb R^n)\}.
$$
we claim that
$$
D_{comp}((L)^m)=D_{min}((L)^m)=H^{2m}(\mathbb R^n).
$$
\underline{Proof} By induction. The case $m=2$ follows by Proposition \ref{AB1} 
$$
\{x\in D(L)=H^2(\mathbb R^n)\,:\,Lx\in D(L)\} =H^4(\mathbb R^n).
$$

Assume the result for $m-1$. Then
$$
\aligned
&\{x\in D_{comp}((L)^{m-1})\,:\,(L)^{m-1}x\in D(L)=H^2(\mathbb R^n)\}\\
&=\{x\in H^{2(m-1)}(\mathbb R^n)\,:\,(L)^{m-1}x\in H^2(\mathbb R^n)\}.
\endaligned
$$
Let $x\in D_{comp}(L)$. Since $(L)^{m-1}=L(L)^{m-2}$, by Proposition \ref{AB1}, $(L)^{m-2}x\in H^4(\mathbb R^n)$. Iterating, we obtain that $x\in H^{2m}(\mathbb R^n)$, and so $D_{comp}((L)^m)\subseteq H^{2m}(\mathbb R^n)$. The opposite inclusion is clear, since $J\geq m+1$.

To complete the proof of Lemma \ref{2.9} we need the following result in \cite{Yo} (Theorem 3 (iv), page 343) concerning the operational calculus:

\begin{theorem}
\label{oper-calc}
Let $H$ be a self-adjoint operator densely defined in a separable Hilbert space $X$. If $x\in D(f(H))$, then $f(H)x\in D(g(H))$ is equivalent to
 $x\in D((gf)(H))$ and
 $$
 g(H)f(H)x=((gf)(H))x.
 $$
 \end{theorem}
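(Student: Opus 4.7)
The plan is to reduce Theorem \ref{oper-calc} to an essentially trivial statement about pointwise multiplication by measurable functions, via the spectral representation theorem already recalled as Theorem \ref{spec-repr}.  That version was stated for the specific operator $L$, but the proof in \cite{davis} produces, for any self-adjoint densely defined operator $H$ on a separable Hilbert space $X$, a $\sigma$-finite measure space $(\Omega,d\mu)$, a unitary operator $U:X\to L^2(\Omega,d\mu)$, and a real-valued measurable function $h:\Omega\to\mathbb R$ such that $UHU^{-1}$ is the multiplication operator $M_h$, with domain $\{\psi\in L^2(\Omega,d\mu):h\psi\in L^2(\Omega,d\mu)\}$.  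Under this equivalence, for every Borel measurable $\phi:\mathbb R\to\mathbb C$ the Borel functional calculus is realized as $\phi(H)=U^{-1}M_{\phi\circ h}U$, where
$$
D(M_{\phi\circ h})=\{\psi\in L^2(\Omega,d\mu):(\phi\circ h)\,\psi\in L^2(\Omega,d\mu)\}.
$$

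With this in hand I would set $\psi=Ux$ and translate each of the three domain conditions into pointwise $L^2$-integrability assertions.  The hypothesis $x\in D(f(H))$ becomes $(f\circ h)\psi\in L^2(\Omega,d\mu)$.  The condition $f(H)x\in D(g(H))$ becomes $(g\circ h)\bigl((f\circ h)\psi\bigr)\in L^2(\Omega,d\mu)$, while $x\in D((gf)(H))$ becomes $((gf)\circ h)\psi\in L^2(\Omega,d\mu)$.  Since pointwise multiplication of scalar-valued functions is associative, $(g\circ h)(f\circ h)=(gf)\circ h$ everywhere on $\Omega$, so these last two conditions coincide.  Applying $U^{-1}$ to both sides of the pointwise identity
$$
(g\circ h)\bigl((f\circ h)\psi\bigr)=\bigl((gf)\circ h\bigr)\psi
$$
then yields $g(H)f(H)x=(gf)(H)x$, completing the equivalence.

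The only real care required is measure-theoretic: one has to make sure that $f\circ h$, $g\circ h$ and $(gf)\circ h$ are well-defined a.e.\ and that the functional-calculus domain $D(\phi(H))$ does indeed coincide with the maximal multiplier domain described above.  Both facts are standard ingredients in the construction of the Borel functional calculus for unbounded self-adjoint operators, and in the concrete version of Theorem \ref{spec-repr} used in this paper, where $\Omega=S\times\mathbb N$ with $S=\sigma(H)\subset[0,\infty)$ and $h(s,n)=s$, the function $h$ is finite-valued on $\Omega$ so no null-set subtlety arises.

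The main (and minor) obstacle is therefore purely bookkeeping: verifying the identification $D(\phi(H))=U^{-1}D(M_{\phi\circ h})$ carefully enough to justify taking $\phi=f$, $\phi=g$, and $\phi=gf$ simultaneously.  Once that identification is in place, the theorem is nothing more than the associativity of multiplication, and no deeper analysis (commutator estimates, domain approximation, regularity, etc.)\ is needed.
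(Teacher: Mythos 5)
The paper does not actually prove Theorem~\ref{oper-calc}: it is quoted verbatim from Yosida (\cite{Yo}, Theorem~3~(iv), page~343), so there is no ``paper proof'' to compare against line by line. Your argument is, however, a correct proof and it is the standard one. You pass through the multiplication-operator model of the spectral theorem: after conjugating by the unitary $U$ furnished by Theorem~\ref{spec-repr}, the Borel functional calculus becomes $\phi(H)=U^{-1}M_{\phi\circ h}U$ with $D(\phi(H))=U^{-1}\{\psi : (\phi\circ h)\psi\in L^2\}$, and the whole statement collapses to the associativity of pointwise multiplication together with the observation that the hypothesis $x\in D(f(H))$ is exactly what makes the middle condition $f(H)x\in D(g(H))$ meaningful. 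Yosida's own proof uses the spectral resolution $\{E(\lambda)\}$ and spectral integrals instead: one shows $d\|E(\lambda)f(H)x\|^2=|f(\lambda)|^2\,d\|E(\lambda)x\|^2$ and compares the integrability of $|g(\lambda)|^2$ against this measure with that of $|g(\lambda)f(\lambda)|^2$ against $d\|E(\lambda)x\|^2$. The two routes are just different packagings of the spectral theorem and carry the same content; the multiplication-operator form you chose makes the bookkeeping essentially invisible, at the cost of invoking the slightly heavier multiplication-operator representation rather than the spectral family alone. The only thing to flag is exactly the point you already name as ``bookkeeping'': one must justify $D(\phi(H))=U^{-1}D(M_{\phi\circ h})$ for \emph{unbounded} Borel $\phi$ simultaneously for $\phi=f,g,gf$, which is part of the statement of the multiplication-operator theorem (e.g.\ Davies' Theorem~2.5.1) and not an extra lemma, so the proof is complete.
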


First, we consider the case $m=2$. To apply Theorem \ref{oper-calc} we  choose $H=L$, $X=L^2(\mathbb R^n)$, and $f(\lambda)=g(\lambda)=\lambda$ so $f(L)=L$. 

If $x\in D(L)=H^2(\mathbb R^n)$, and $Lx\in D(L)=H^2(\mathbb R^n)$, then by Proposition \ref{AB1} it follows that  $x\in H^4(\mathbb R^n)=D_{min}(LL)$, so that
$x\in D((gf)(L))=D(L^2)$ and $LLx=L^2x$. 

Thus, $H^4(\mathbb R^n)=D_{min}(LL)=D(L^2)$, and for $x\in H^4(\mathbb R^n)$ one has $L^2x=LLx$, so the case $LL=L^2$ is done.

Now we assume that 
$$
L...L=(L)^{m-1}=L^{m-1}\;\;\;\;\text{and}\;\;\;\;\;D_{min}((L)^{m-1})=D(L^{m-1})=H^{2(m-1)}(\mathbb R^n),
$$
and we want to prove the result for $(L)^m$. Let  $X=L^2(\mathbb R^n)$, $H=L$, $g(\lambda)=\lambda$, and $f(\lambda)=\lambda^{m-1}$. Assume $x\in D((L)^{m-1})=H^{2(m-1)}(\mathbb R^n)$, and $(L)^{m-1}x\in D(L)=H^2(\mathbb R^n)$. Then, by Proposition \ref{AB1}, $x\in H^{2m}(\mathbb R^n)$, and we know that 
$D_{min}((L)^m)=H^{2m}(\mathbb R^n)$, see Claim 3. Hence, $x\in D((gf)(L))=D(L^m)$, and 
$$
L^m(x)=L(L)^{m-1}x=(L)^m(x)\;\;\;\text{and}\;\;\;D(L^m)=H^{2m}(\mathbb R^n),
$$
as we want.

To complete the proof we shall show that $D_{min}((L)^m)=H^{2m}(\mathbb R^n)$ as it was stated in Claim 3. If $f \in D_{min}((L)^m)$, then there exists $\{f_j\,:\,j\in \mathbb N\}\subseteq \mathcal D$ such that 
$$
f_j\to f\;\;\;\text{and}\;\;\; (L)^mf_j\to (L)^mf\;\;\;\;\text{in}\;\;\;L^2(\mathbb R^n).
$$
But then by Claim 3, $f_j\to \tilde f$ in $H^{2m}(\mathbb R^n)$, with $\tilde f =f$.  Next, assume $f\in H^{2m}(\mathbb R^n)$, pick $\{f_j\,:\,j\in \mathbb N\}\subseteq \mathcal D$ such that $f_j\to f$ in $H^{2m}(\mathbb R^n)$. Then by Claim 3 $(L)^mf_j$ converges in $L^2(\mathbb R^n)$, so $f\in D_{min}((L)^m)$.
\end{proof}

Once we have Lemma \ref{2.9} using the same argument given in the proof of Theorem \ref{2.1}, Lemma \ref{2.9}, and spectral properties of the operators one has:

\begin{proposition}\label{2.14.b} 
Let $ \kappa\in \mathbb N$ and $J\geq 2\kappa-1$ in \eqref{J}. Then \begin{enumerate}
\item
for $\alpha\in [0, \kappa]$, $L^{\alpha}$  is a self-adjoint operator, with domain $H^{2\alpha}(\mathbb R^n)$ and for $f\in H^{2\alpha}(\mathbb R^n)$
$$
\|f\|_{2\alpha,2}\sim \| L^{\alpha}f\|_2+\|f\|_2.
$$
 with the comparability constants depending only on $L$, $J$ and $n$.
\item
 If $B$ is a bounded Borel function on $[0,\infty)$, $B(L^{\alpha})$ commutes with any operator of the form $f(L)$, where $f$ is a locally bounded Borel function, and $f(L)$ is the operator defined through the spectral calculus of $L$ as a self-adjoint operator.
\end{enumerate}
\end{proposition}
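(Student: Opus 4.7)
The plan is to combine Lemma \ref{2.9} --- which identifies $L^m=L\cdots L$ with $D(L^m)=H^{2m}(\mathbb R^n)$ and gives the norm equivalence for integer $m\leq\kappa$ --- with the complex interpolation scheme used in the proof of Theorem \ref{2.1} for $\alpha\in(0,1)$, to handle the remaining non-integer values $\alpha\in(0,\kappa)\setminus\mathbb N$. Self-adjointness of $L^\alpha$ for every $\alpha\in[0,\kappa]$ is immediate from the functional calculus of the self-adjoint operator $L$ applied to the real Borel function $h\mapsto h^\alpha$ on $\sigma(L)\subset[0,\infty)$.

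For the identification of the domain and the norm equivalence in part (1), the integer case $\alpha=m\in\{0,1,\ldots,\kappa\}$ is exactly Lemma \ref{2.9}. For $\alpha=m+\theta$ with $m\in\{0,\ldots,\kappa-1\}$ and $\theta\in(0,1)$, I would run the three-lines argument on the strip $m\leq\Re z\leq m+1$. For the inclusion $H^{2\alpha}(\mathbb R^n)\subseteq D(L^\alpha)$, fix $g\in L^2$, $\varphi\in\mathcal D$, and analyze
\[
F(z)=\int\bigl(L^{z}+I\bigr)(-\Delta+I)^{-z}(g)\,\overline{\varphi}\,dx;
\]
the boundary estimates at $\Re z=m$ and $\Re z=m+1$ come from Lemma \ref{2.9} applied at the integer values $m$ and $m+1$ (permissible because $J\geq 2\kappa-1\geq 2(m+1)-1$), combined with the unitarity $\|L^{i\gamma}\|_{L^2\to L^2}=1$. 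The three-lines theorem then gives $\|L^{\alpha}f\|_2\lesssim\|f\|_{2\alpha,2}$. For the reverse inclusion, write $f=(L+I)^{-\alpha}g$, where $(L+I)^{-\alpha}$ is defined through the functional calculus of $L$ via the non-vanishing symbol $h\mapsto(h+1)^{-\alpha}$, and apply the same machinery to
\[
G(z)=\int(-\Delta+I)^{z}(L+I)^{-z}(g)\,\overline{\varphi}\,dx,
\]
with endpoints again controlled by Lemma \ref{2.9} at $\Re z=m$ and $\Re z=m+1$.

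Part (2) is a direct consequence of the spectral theorem. Since $L^{\alpha}=g_{\alpha}(L)$ with $g_{\alpha}(h)=h^{\alpha}$, every bounded Borel function $B$ of $L^\alpha$ satisfies $B(L^{\alpha})=(B\circ g_{\alpha})(L)$, exhibiting $B(L^\alpha)$ as a bounded Borel function of $L$ itself. Two such functions of the same self-adjoint operator commute, and the extension to a locally bounded Borel $f$ is obtained on $D(f(L))$ by truncating $f\mapsto f\,\chi_{[0,N]}$ and passing to the limit $N\to\infty$, exactly as in the spectral-theoretic framework of Theorem \ref{oper-calc}.

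The main technical obstacle is verifying that the analytic families $F$ and $G$ are genuinely analytic in the strip and uniformly bounded on its sides, rather than merely formal expressions. As in the $\alpha\in(0,1)$ case treated in Theorem \ref{2.1}, this is handled by first replacing $g$ with $g_j=\chi_{[0,j]}(L)g$, which lies in $\bigcap_{m\leq\kappa}D(L^m)$ by the functional calculus and hence in $H^{2m}(\mathbb R^n)$ for every relevant $m$ via Lemma \ref{2.9}; the three-lines estimate is uniform in $j$, and one passes to the limit $j\to\infty$ using density of $\{g_j\}$ in $L^2(\mathbb R^n)$ together with the bound just obtained to identify limits both in $L^2$ and in $H^{2\alpha}$.
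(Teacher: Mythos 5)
Your proposal is correct and follows essentially the same route the paper intends: Lemma \ref{2.9} supplies the integer endpoints, the three-lines (Stein) interpolation argument of Theorem \ref{2.1} fills in the non-integer $\alpha$, and the spectral theorem (via $B(L^{\alpha})=(B\circ g_{\alpha})(L)$) gives self-adjointness and the commutation in part (2). You correctly carry over the paper's workaround of replacing $L^{z}+I$ by $(L+I)^{z}$ (defined through the functional calculus of $L$) in the reverse direction to avoid the vanishing-denominator problem, and your truncation-and-limit step exploits the closedness of $L^{\alpha}$ exactly as the paper does for $\alpha\in(0,1)$.
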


\subsection{Sketch of different  proof of the  estimate \eqref{r3}.}

One can give an alternative proof of the main estimate \eqref{r3} in Corollary \ref{abcd}. This is based on a commutator estimate, Proposition \ref{comm-BMO} below, and the result for $\alpha\in[0,1]$. The commutator estimate needed was established in \cite{LD} (Theorem 1.2, estimate 1.5):

 \begin{lemma} [\cite{LD}]
\label{abcde}
Let $\theta\in (0,1)$ and $1<p<\infty$. Then for any $f, g\in \mathcal S(\mathbb R^n)$
\begin{equation}
\label{r7}
\| [D^{\theta}; g]f\|_p\leq c\|D^{\theta} g\|_{BMO} \|f \|_p\leq c \| D^{\theta}g\|_{\infty}\|f\|_p.
\end{equation}
\end{lemma}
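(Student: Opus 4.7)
The plan is to prove the commutator bound via a Littlewood--Paley / Bony paraproduct decomposition combined with the Carleson measure characterization of $BMO$. Fix a standard Littlewood--Paley partition $\{\Delta_k\}_{k\geq -1}$ with low-frequency cutoffs $S_k=\sum_{j\leq k-1}\Delta_j$, and recall Bony's decomposition of a product
\begin{equation*}
gf \;=\; T_g f \,+\, T_f g \,+\, R(g,f),
\end{equation*}
where $T_g f = \sum_k S_{k-3}g \cdot \Delta_k f$ and $R(g,f) = \sum_k \Delta_k g \cdot \widetilde{\Delta}_k f$, with $\widetilde{\Delta}_k=\sum_{|j-k|\leq 2}\Delta_j$. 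Applying this expansion to both $D^{\theta}(gf)$ and to $g\cdot D^{\theta}f$ produces the identity
\begin{equation*}
[D^{\theta},g]f \;=\; \bigl(D^{\theta}T_g f - T_g D^{\theta}f\bigr) + \bigl(D^{\theta}T_f g - T_{D^{\theta}f}g\bigr) + \bigl(D^{\theta}R(g,f) - R(g,D^{\theta}f)\bigr),
\end{equation*}
reducing matters to the separate control of three structurally distinct pieces.

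For the low-high piece $D^{\theta}T_g f - T_g D^{\theta}f = \sum_k [D^{\theta}, S_{k-3}g]\,\Delta_k f$, each summand is frequency-localized at scale $\sim 2^k$ and involves a symbol commutator with the (smooth at scale $2^{-k}$) function $S_{k-3}g$. A kernel mean-value argument starting from the principal-value representation
\begin{equation*}
[D^{\theta}, b]\varphi(x) \;=\; c_{n,\theta}\!\int \frac{(b(x)-b(y))\,\varphi(y)}{|x-y|^{n+\theta}}\,dy
\end{equation*}
gives a pointwise bound $\bigl|[D^{\theta},S_{k-3}g]\Delta_k f(x)\bigr|\lesssim h_k(x)\,M(\Delta_k f)(x)$, where $h_k$ is an averaged version of $D^{\theta}g$ at scale $2^{-k}$; summing in $k$ via Cauchy--Schwarz, the vector-valued Fefferman--Stein maximal inequality, the Littlewood--Paley square function characterization of $L^p$ for $1<p<\infty$, and the Carleson control of the $h_k$ by $\|D^{\theta}g\|_{BMO}$, closes the estimate. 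For the high-low and resonant pieces, $D^{\theta}$ is essentially transferred onto the $g$-factor via $\Delta_k g \approx 2^{-k\theta}\Delta_k D^{\theta}g$; the key input is then that $\{|\Delta_k D^{\theta}g|^2\}_k$ is a Carleson sequence of total mass $\lesssim \|D^{\theta}g\|_{BMO}^2$, and tent-space / $T(1)$ duality (equivalently, Carleson's inequality) converts this into an $L^p$ bound of the form $\|D^{\theta}g\|_{BMO}\cdot\|Mf\|_{L^p}\lesssim \|D^{\theta}g\|_{BMO}\|f\|_p$.

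The main obstacle I anticipate is the resonant (``high-high'') term $D^{\theta}R(g,f) - R(g,D^{\theta}f)$: because each product $\Delta_k g \cdot \widetilde{\Delta}_k f$ produces output frequencies that can be arbitrarily low, one cannot simply localize $D^{\theta}$ scale by scale, and the correct bound emerges only by exploiting the cancellation between the two summands in the difference jointly with the Carleson structure of $D^{\theta}g\in BMO$. The second inequality in \eqref{r7}, $\|D^{\theta}g\|_{BMO}\leq c\|D^{\theta}g\|_{L^{\infty}}$, is the trivial embedding $L^{\infty}(\mathbb R^n)\hookrightarrow BMO(\mathbb R^n)$, which follows at once from the definition of the $BMO$ seminorm via oscillation against the average on balls.
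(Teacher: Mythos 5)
The paper does not actually prove this lemma: it is stated as a quotation of Theorem~1.2, estimate~(1.5) in D.~Li's paper (reference \cite{LD} in the bibliography), and no proof is given in the text. So there is no ``paper's own proof'' to compare your attempt against; the comparison has to be with the argument in \cite{LD} itself, which is likewise based on Littlewood--Paley/paraproduct decompositions. Your Bony-decomposition blueprint is therefore in the same methodological family, and the algebraic identity splitting $[D^{\theta},g]f$ into the three paraproduct commutators is correct.

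Where I would push back is on the low-high term. You claim a pointwise bound of the form $\bigl|[D^{\theta},S_{k-3}g]\Delta_k f(x)\bigr|\lesssim h_k(x)\,M(\Delta_k f)(x)$ with $h_k$ ``an averaged version of $D^{\theta}g$ at scale $2^{-k}$,'' and then propose to close via Fefferman--Stein and the square function. But averages of a $BMO$ function over balls of radius $2^{-k}$ are \emph{not} uniformly bounded by the $BMO$ seminorm, so $\sup_k\|h_k\|_{\infty}$ is not controlled and the naive pointwise-domination route does not by itself deliver the claimed $\|D^{\theta}g\|_{BMO}$ factor --- it would only give the weaker $\|D^{\theta}g\|_{\infty}$ bound. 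The $BMO$ improvement has to come from treating $\{2^{-kn}|\Delta_k D^{\theta}g|^2\,dx\otimes\delta_{2^{-k}}\}$ as a Carleson measure and pairing against the tent-space norm of the square function of $f$; you mention this in passing but the structure of the low-high estimate should be organized around it from the start rather than grafted onto a pointwise bound. You correctly identify the resonant piece $D^{\theta}R(g,f)-R(g,D^{\theta}f)$ as the locus of the real difficulty (the output frequency is not localized, so $D^{\theta}$ cannot be replaced scale-by-scale), and your observation that the two summands must be combined before estimating is the right instinct. The final remark, that the second inequality in \eqref{r7} is just the continuous embedding $L^{\infty}(\mathbb R^n)\hookrightarrow BMO(\mathbb R^n)$, is correct.
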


We recall for $f\in L^1_{loc}(\mathbb R^n)$ one says that $f\in BMO$ if
$$
\| f\|_{BMO}=\sup_{Q: cube}\frac{1}{|Q|}\int_Q|f(x)-f_Q|dx<\infty,
$$
where $\;f_Q=\int_Qf(x)dx$ and $\;|Q|=\text{measure of }\;Q$.

We shall also use the following result : 

\begin{proposition}
\label{comm-BMO}
If $g\in  W^{1,\infty}(\mathbb R^n)$, then $D^{\theta}g\in BMO(\mathbb R^n)$ for any $\theta\in(0,1)$, and
$$
\| D^{\theta}g\|_{BMO}\leq c \|g\|_{\infty}^{1-\theta}\|\nabla g\|_{\infty}^{\theta}\leq c (\|g\|_{\infty}+\| \nabla g\|_{\infty})=c\|g\|_{1\infty}.
$$
\end{proposition}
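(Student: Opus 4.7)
The plan is to derive the $BMO$ bound from the stronger pointwise estimate
\begin{equation*}
\|D^\theta g\|_\infty \le c\|g\|_\infty^{1-\theta}\|\nabla g\|_\infty^\theta,
\end{equation*}
which, combined with the continuous embedding $L^\infty(\mathbb R^n)\hookrightarrow BMO(\mathbb R^n)$ (via $\|f\|_{BMO}\le 2\|f\|_\infty$), yields the first inequality. The second inequality in the statement then follows from Young's inequality, $a^{1-\theta}b^\theta\le (1-\theta)a+\theta b\le a+b$.

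The key tool is the singular integral representation
\begin{equation*}
D^\theta g(x) = c_{n,\theta}\int_{\mathbb R^n}\frac{g(x)-g(y)}{|x-y|^{n+\theta}}\,dy,\qquad \theta\in(0,1),
\end{equation*}
classically valid for Schwartz $g$. For $g\in W^{1,\infty}(\mathbb R^n)$ the right-hand side is absolutely convergent, since $|g(x)-g(y)|\le \min(2\|g\|_\infty,\|\nabla g\|_\infty|x-y|)$: the near part is controlled by $|x-y|^{1-n-\theta}$ (integrable because $\theta<1$) and the far part by $|x-y|^{-n-\theta}$ (integrable because $\theta>0$). The formula extends to $g\in W^{1,\infty}$ by a mollification argument, $g_\epsilon=g\ast\rho_\epsilon$, which preserves the $L^\infty$ and Lipschitz bounds and converges to $g$ uniformly on compact sets, allowing dominated convergence on the right and a distributional identification on the left.

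Given the representation, I would split the domain of integration at a scale $r>0$: on $\{|x-y|\le r\}$ use the Lipschitz bound to majorize the integrand by $\|\nabla g\|_\infty|x-y|^{1-n-\theta}$; on $\{|x-y|>r\}$ use the $L^\infty$ bound to majorize it by $2\|g\|_\infty|x-y|^{-n-\theta}$. Integration in polar coordinates yields
\begin{equation*}
|D^\theta g(x)|\le C_{n,\theta}\left(\frac{\|\nabla g\|_\infty\, r^{1-\theta}}{1-\theta}+\frac{2\|g\|_\infty\, r^{-\theta}}{\theta}\right).
\end{equation*}
Choosing $r=\|g\|_\infty/\|\nabla g\|_\infty$ to balance the two terms (the case $\|\nabla g\|_\infty=0$ is trivial, since then $g$ is constant and $D^\theta g\equiv 0$) produces the desired pointwise bound.

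The principal obstacle is the justification of the pointwise integral representation for $g\in W^{1,\infty}(\mathbb R^n)$, since such $g$ need not lie in $L^p$ for any $p<\infty$ and hence $D^\theta g$ is only defined a priori as a tempered distribution. The mollification argument above resolves this; alternatively one may identify both sides as tempered distributions by testing against Schwartz functions and using Plancherel, which reduces the matter to the well-known identity on $\mathcal{S}(\mathbb R^n)$.
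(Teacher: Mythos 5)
Your proof is correct, and it takes a genuinely different and more elementary route than the paper's. The paper argues entirely at the level of $BMO$: it first shows $Dg\in BMO$ by writing $Dg=\sum_j R_j(\partial_{x_j}g)$ and invoking the $L^\infty\to BMO$ boundedness of the Riesz transforms, then observes that the purely imaginary powers $D^{i\gamma}$ (being convolution singular integrals) preserve $BMO$, and finally interpolates via the three lines theorem applied to $F(z)=D^z g$ on the strip $0\le\operatorname{Re}z\le 1$. Your proof instead uses the kernel representation of $(-\Delta)^{\theta/2}$ to derive the stronger pointwise estimate $\|D^\theta g\|_\infty\lesssim \|g\|_\infty^{1-\theta}\|\nabla g\|_\infty^\theta$ by the usual near/far split and optimization in the scale $r$, and then appeals only to the trivial embedding $L^\infty\hookrightarrow BMO$. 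A few remarks on the comparison: (i) your argument yields a strictly stronger conclusion ($L^\infty$ rather than $BMO$), and it delivers the claimed quantitative constant $\|g\|_\infty^{1-\theta}\|\nabla g\|_\infty^\theta$ transparently, whereas the paper's sketch would require uniform-in-$\gamma$ bounds on the two lines $\operatorname{Re}z=0,1$ to extract the same dependence, a point the paper leaves implicit; (ii) the paper's complex-interpolation scheme is more robust in situations where the relevant operator lacks an explicit off-diagonal kernel (which is exactly the kind of flexibility the surrounding spectral-theoretic machinery for $L(x)^\alpha$ calls for), while your kernel argument is tied to the constant-coefficient fractional Laplacian $D^\theta$, which fortunately is all that is needed here; (iii) your care about justifying the pointwise integral formula for $g\in W^{1,\infty}$ — which is not in any $L^p$, $p<\infty$, so $D^\theta g$ is a priori only a tempered distribution (indeed only a distribution modulo polynomials, since $|\xi|^\theta$ is not smooth at the origin) — is well placed, and the mollification-plus-dominated-convergence argument you outline closes that gap cleanly.
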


\begin{proof} [Proof of Proposition \ref{comm-BMO}]

\underline{Claim} : for  $\gamma\in \mathbb R$, $D^{1+i \gamma}g\in BMO$.

First, we shall see that $Dg\in BMO$. We write
$$
\widehat {Dg}(\xi)=|\xi|\widehat{g}(\xi)=\sum_{j=1}^n\frac{\xi^2_j}{|\xi|}\widehat{g}(\xi)=\sum_{j=1}^n\frac{\xi_j}{|\xi|}\xi_j \widehat{g}(\xi)
=\sum_{j=1}^n\widehat{R_j(\partial_{x_j}g)}(\xi).
$$
Since $\nabla g\in L^{\infty}(\mathbb R^n)$, and   $R_j $ the $j$-th Riesz transform maps $L^{\infty}$ to BMO, it follows that $Dg\in BMO$.

Next, we recall that convolution singular integral operators maps $BMO$ to $BMO$. Since $D^{i\gamma}$ is a convolution singular integral, it does that (for a different proof  see \cite{St} page 136, 5.21 (ii)). Hence
$$
D^{1+i\gamma}g=D^{i\gamma} Dg,
$$ 
and the claim follows.

Now, for $z=a+i\gamma$, $\gamma\in \mathbb R$, $0\leq a\leq 1$ we define $F(z)=D^zg$. Thus, $F$ is continuous on
$\{z=a+i\gamma\in \mathbb C\;:\,a\in[0,1],\;\;\gamma\in \mathbb R\}$ and analytic in its interior. 
Since $L^{\infty}(\mathbb R^n)\subset BMO(\mathbb R^n)$, one has that $F(0+i\gamma)\in BMO$. Also, by the above claim $F(1+i\gamma)\in BMO$.

Finally, using the three lines theorem one gets that $D^sg\in BMO$ for $s\in(0,1)$.

\end{proof}
 \begin{proof} [Sketch of a different proof of estimate \eqref{r3} in Corollary \ref{abcd}]
 

  First, we consider the case $\alpha\in (1, 3/2)$. Thus, $\alpha=1+\theta$ with $\theta \in(0,1/2)$ and we need to show :
\begin{equation}
\label{r3a}
\|f\|_{2\alpha,2}=\|(1-\Delta)^{\alpha}f\|_2\sim \|f\|_2+\|L^{1+\theta}f\|_2,\;\;\;\forall f\in H^{2+2\theta}(\mathbb R^n).
\end{equation}

A combination of  the results for $\alpha\in (0,1)$, the fact that $\theta\in(0,1/2)$, the inequality \eqref{r7} and\ Proposition \ref{comm-BMO} leads to
$$
\begin{aligned}
&\| L^{1+\theta}f\|_2=\|L^{\theta}Lf\|_2\lesssim \|J^{2\theta}Lf\|_2+\|Lf\|_2\\\
&\lesssim \|J^{2\theta}(a_{jk}\partial^2_{x_jx_k}f)\|_2 +\|J^{2\theta}(\partial_{x_j}a_{jk}\partial_{x_k}f+cf)\|_2+\|f\|_{2,2}\\
&\lesssim \|D^{2\theta}(a_{jk}\partial^2_{x_jx_k}f)\|_2 +\|a_{jk}\partial^2_{x_jx_k}f\|_2+\|f\|_{2,2}\\
&\lesssim \|D^{2\theta}(a_{jk}\partial^2_{x_jx_k}f)-a_{jk}D^{2\theta}\partial^2_{x_jx_k}f+a_{jk}D^{2\theta}\partial^2_{x_jx_k}f\|_2+\|f\|_{2,2}\\
&\lesssim \| [D^{2\theta};a_{jk}]\partial^2_{x_jx_k}f\|_2 +\|a_{jk}D^{2\theta}\partial^2_{x_jx_k}f\|_2+\|f\|_{2,2}\\
&\lesssim \|D^{2\theta}a_{jk}\|_{BMO}\|\partial^2_{x_jx_k}f\|_2+\|D^{2\theta}\partial^2_{x_jx_k}f\|_2+\|f\|_{2,2}\\
&\lesssim \|f\|_{2+2\theta,2}.
\end{aligned}
$$

To obtain the reverse estimate,  we write
$$
\begin{aligned}
&\| L^{1+\theta}f\|_2+\|f\|_{2,2}=\| L^{\theta}Lf\|_2+\|f\|_{2,2}\\
&\sim \|J^{2\theta}Lf\|_2+\|Lf\|_2+\|f\|_{2,2}\sim \|D^{2\theta}Lf\|_2+\|f\|_{2,2}\equiv V_2,
\end{aligned}
$$
and define
\begin{equation}
\label{gen22}
\begin{aligned}
Lf&=-a_{jk}(x)\partial_{x_jx_k}^2f-\partial_{x_j}a_{jk}(x)\partial_{x_k}f+c(x)f\\
&=L_{pr}f-\partial_{x_j}a_{jk}(x)\partial_{x_k}f+c(x)f.
\end{aligned}
\end{equation}

Now
$$
\begin{aligned}
&\|D^{2\theta}Lf\|_2=\|D^{2\theta}(L_{pr}f-\partial_{x_j}a_{jk}\partial_{x_k}f+c(x)f)\|_2\\
&\geq \|D^{2\theta}(L_{pr}f)\|_2-\|D^{2\theta}(\partial_{x_j}a_{jk}\partial_{x_k}f-c(x)f)\|_2\\
&\geq \|D^{2\theta}(a_{jk}\partial^2_{x_jx_k}f)\|_2-M\|f\|_{2,2}\\
&=\|a_{jk}D^{2\theta}\partial^2_{x_jx_k}f+[D^{2\theta};a_{jk}]\partial_{x_k}f\|_2-M\|f\|_{2,2}\\
&\geq \| L_{pr}D^{2\theta}f\|_2-\|a_{jk}\|_{BMO}\| \partial^2_{x_jx_k}f\|_2-M\|f\|_{2,2}\\
&\geq \| LD^{2\theta}f\|_2-\|(L- L_{pr})D^{2\theta}f\|_2-\|a_{jk}\|_{BMO}\| \partial^2_{x_jx_k}f\|_2-M\|f\|_{2,2}.
\end{aligned}
$$
Therefore,
$$
\begin{aligned}
&V_2=\|D^{2\theta}Lf\|_2+\|f\|_{2,2}\gtrsim \| LD^{2\theta}f\|_2+\|f\|_{2,2}\\
&\gtrsim \| J^2 D^{2\theta}f\|_2+\|f\|_{2,2}\sim \|f\|_{2+2\theta,2}
\end{aligned}
$$
Combining the above estimates, one gets the desired result for $\alpha\in (1, 3/2)$.

 For the case $\alpha\in [3/2, 2)$, i.e.  $2\alpha=3+\theta,\,\theta\in [0,1)$ one uses 
$$
\| J^{2\alpha}f\|_2\sim \| J^{2+\theta}\nabla f\|_2+ \| J^{2+\theta} f\|_2
$$
and reapplies the previous argument to the term $\| J^{2+\theta}\nabla f\|_2$. 

The case for larger values of $\alpha$ follows by an iterative argument. This combine the estimate \eqref{r7} and the following estimates:
if $2\alpha\in \mathbb N$, then
$$
\|L^{\alpha}f\|_2+\|f\|_2\sim \|f\|_{2\alpha,2} \iff \|L^{\alpha}f\|_2+\|f\|_{2\alpha-1,2}\sim \|f\|_{2\alpha,2}
$$ 
and if $2\alpha \in[1,\infty)-\mathbb N$, then
$$
\|L^{\alpha}f\|_2+\|f\|_2\sim \|f\|_{2\alpha,2} \iff \|L^{\alpha}f\|_2+\|f\|_{[2\alpha],2}\sim \|f\|_{2\alpha,2}
$$
where $[\cdot]$ denotes the integer part function.

\end{proof}

  \vskip.1in
  
  \subsection{Estimates for the non-linear results}
  
 To conclude this section, we deduce some results which will be needed for our non-linear results in Section 3,  
 
  First, we recall that 
 $$
L^{s/2} L^{\alpha} f =L^{\alpha} L^{s/2}f,\;\;\;\;\;\;f\in H^{s+2\alpha}(\mathbb R^n),
$$
where this follows from the spectral calculus for $L$ (\cite{fitz}, Theorem 4 (ii)).

\begin{corollary} \label{remark-future} For $\alpha\in(0,1)$ and $M>0$
 \begin{equation}
 \label{art-vis-est}
 \begin{aligned}
 &(i)\;\|e^{it L^{\alpha}}g\|_2\leq c\|g\|_2,\;\;\;\;\forall \,t\in\mathbb R,\\
 &(ii)\;\|e^{-\epsilon tL^{\alpha}}g\|_2\leq c\|g\|_2,\;\;\;\;\;\;\forall \,\epsilon, \, t>0,\\
 &(iii)\;\| Le^{-\epsilon tL^2+it L^{\alpha}}g\|_2\leq \frac{c}{\epsilon^{1/2}t^{1/2}}\|g\|_2,\;\;\forall \,\epsilon, t\in (0,M).
 \end{aligned}
 \end{equation}
 \end{corollary}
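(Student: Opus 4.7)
The plan is to prove all three estimates directly via the spectral representation of Theorem \ref{spec-repr}. Since $L$ is self-adjoint and non-negative with spectrum $\sigma(L) \subseteq [0,\infty)$, under the unitary $U : L^2(\mathbb R^n) \to L^2(S \times \mathbb N, d\mu)$ the operator $L$ becomes multiplication by the function $h(s,n) = s \geq 0$. Then for any bounded Borel function $F : [0,\infty) \to \mathbb C$, the operator $F(L)$ corresponds to multiplication by $F(h)$, and its $L^2 \to L^2$ operator norm equals $\|F\|_{L^\infty(S, \text{ess sup w.r.t. }\mu)} \leq \|F\|_{L^\infty([0,\infty))}$.

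For (i), the choice $F(s) = e^{its^\alpha}$ has $|F(s)| \equiv 1$ on $[0,\infty)$, so $e^{itL^\alpha}$ is unitary and (i) follows with $c = 1$. For (ii), the function $F(s) = e^{-\epsilon t s^\alpha}$ satisfies $0 \leq F(s) \leq 1$ for $\epsilon, t > 0$ and $s \geq 0$, so again $\|e^{-\epsilon t L^\alpha}\|_{L^2 \to L^2} \leq 1$.

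For (iii), I would observe that, since $L^2$, $L^\alpha$ and $L$ are all obtained from the functional calculus of the same self-adjoint operator $L$, they mutually commute and
\begin{equation*}
L\, e^{-\epsilon t L^2 + it L^{\alpha}} = F_{\epsilon,t}(L), \qquad F_{\epsilon,t}(s) = s\, e^{-\epsilon t s^2}\, e^{it s^\alpha},
\end{equation*}
this identification being justified by Proposition \ref{2.14.b}(2) (or equivalently Theorem \ref{oper-calc}) applied to the Borel functions $s$, $e^{-\epsilon t s^2}$, $e^{it s^\alpha}$ with the product Borel function $F_{\epsilon,t}$. Since $|F_{\epsilon,t}(s)| = s\, e^{-\epsilon t s^2}$ on $[0,\infty)$, elementary calculus gives
\begin{equation*}
\sup_{s \geq 0} s\, e^{-\epsilon t s^2} = (2\epsilon t)^{-1/2}\, e^{-1/2},
\end{equation*}
achieved at $s_* = (2\epsilon t)^{-1/2}$. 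Therefore the operator norm of $L e^{-\epsilon t L^2 + it L^{\alpha}}$ is bounded by $c\,(\epsilon t)^{-1/2}$, which yields (iii) (the constraint $\epsilon, t \in (0,M)$ plays no role in the estimate itself, but ensures the bound is genuinely singular only at $0$).

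There is no real obstacle here: the entire corollary is a routine consequence of the spectral calculus built in the previous subsections. The only point requiring a little care is the identification in (iii) that the operator product $L \cdot e^{-\epsilon t L^2} \cdot e^{it L^{\alpha}}$ coincides with the single multiplier $F_{\epsilon,t}(L)$ on its natural domain (which contains at least $H^2(\mathbb R^n)$, and by density extends to bound the operator on all of $L^2$); this is exactly what Proposition \ref{2.14.b}(2) is set up to give.
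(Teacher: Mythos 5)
Your proof is correct and follows essentially the same route as the paper's: both prove (i)--(iii) by recognizing each operator as a spectral multiplier $F(L)$ with a bounded symbol, the paper recording for (iii) that $\epsilon^{1/2}t^{1/2}s\,e^{-\epsilon t s^2}e^{its^\alpha}$ is bounded while you explicitly compute the supremum $(2\epsilon t)^{-1/2}e^{-1/2}$. The only (minor) difference is that you add the helpful observation that the restriction $\epsilon, t \in (0,M)$ is not actually needed for the bound.
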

 
 The estimates $(i)-(ii)$  in \eqref{art-vis-est} follow from the spectral theorem for $L$, since $L$ is positive and self-adjoint, and 
the functional calculus associated to the bounded functions  $f_1(x)=e^{itx^{\alpha}}$, and $f_2(x)=e^{-\epsilon t x^{\alpha}}$.

 For $(iii)$,  $f_3(x)=\epsilon^{1/2}t^{1/2}x e^{-\epsilon t x^2}e^{i t x^{\alpha}}$ is a bounded function, so it follows from the functional calculus of $L$.

\vskip.1in
\section{Proof of Theorem \ref{A1} and Theorem \ref{A1a}}
\begin{proof} [Proof of Theorem \ref{A1}]
We shall consider the formally equivalent integral version of the IVP \eqref{fNLS}
\begin{equation}
\label{int-eq1}
u(t)=e^{itL^{\alpha}}u_0+i\int_0^{t} e^{i(t-t')L^{\alpha}}P(u,\overline{u})(t')dt'.
\end{equation}

We shall obtain the local well-possedness of the equation \eqref{int-eq1} via the contraction principle. We define the operator
\begin{equation}
\label{int-eq2}
\Psi_{u_0}(v)(t)=e^{itL^{\alpha}}u_0+i\int_0^{t} e^{i(t-t')L^{\alpha}}P(v,\overline{v})(t')dt',
\end{equation}
on the complete space
$$
\mathcal X_{s}^T=\{v:\mathbb R^n\times [-T,T]\to \mathbb C\:\; :\;v\in C([-T,T]:H^{s}(\mathbb R^n))\},
$$
 with norm
$$
|||v|||_{s,T}=\sup_{[-T,T]}\|v(t)\|_{s,2},
$$
$s$ as in the statement of Theorem \ref{A1} and $T>0$ to be fixed.

Using the results in Section  2, Corollary \ref{remark-future}, Corollary \ref{abcd}, and Proposition \ref{2.14.b} we have :
$$
\aligned
&\|e^{itL^{\alpha}} f\|_{s,2}=\|(1-\Delta)^{s/2} e^{itL^{\alpha}}f\|_2\sim \| L^{s/2}e^{itL^{\alpha}} f\|_2+\|e^{itL^{\alpha}}f\|_2\\
&=\|e^{itL^{\alpha}}L^{s/2}f\|_2+\|f\|_2\sim \|L^{s/2}f\|_2+\|f\|_2\\
&\leq c \|(1-\Delta)^{s/2}f\|_2=c\|f\|_{s,2}.
\endaligned
$$

Also, we recall an inequality found in \cite{Ka-Po} : for any $l>0$ 
\begin{equation}
\label{KP}
\| J^l(fg)||_2\leq c(\|f\|_{\infty}\|J^lg\|_2+\|g\|_{\infty}\|J^lf\|_2).
\end{equation}

Therefore, since $s>n/2$, for any $f\in H^{s}(\mathbb R^n)$, it follows from the form of $P(z,\overline{z})$ in \eqref{hyp-P} that
\begin{equation}
\label{ine-77a}
\begin{aligned}
\| P(f,\overline{f})\|_{s,2} &\leq c\,(\|f\|_{\infty}^{N_1-1}+\|f\|_{\infty}^{N_2-1})\|f\|_{s,2}\\
& \leq c\, (\|f\|_{s,2}^{N_1}+\|f\|_{s,2}^{N_2}),
\end{aligned}\end{equation}
and similarly
\begin{equation}
\label{ine-77b}
\begin{aligned}
&\| P(f,\overline{f})-P(g,\overline{g})\|_{s,2} \\
&\leq c\, (\|f\|_{s,2}^{N_1-1}+\|f\|_{s,2}^{N_2-1}+\|g\|_{s,2}^{N_1-1}+\|g\|_{s,2}^{N_2-1})\,\|f-g\|_{s,2}.
\end{aligned}
\end{equation}

Combining the above estimates, one has that  for any $v,w \in \mathcal X_{s}^T$ 
$$
||| \Psi_{u_0}(v)|||_{s,T}\leq c\|u_0\|_{s,2}+cT(|||v||_{s,T}^{N_1}+|||v|||_{s,T}^{N_2})
$$
and
$$
\aligned
&||| \Psi_{u_0}(v)-\Psi_{u_0}(w)|||_{s,T}\\
&\leq cT (|||v|||_{s,T}^{N_1-1}+|||v|||_{s,T}^{N_2-1}+|||w|||_{s,T}^{N_1-1}+|||w|||_{s,T}^{N_2-1})|||v-w|||_{s,T}
\endaligned
$$
\vskip.1in
Restricting the domain of $\Psi_{u_0}$ to the set 
$$
\overline{B_{s,T}(R)}=\{v\in\mathcal X_s^T\;:\;|||v|||_{s,T}\leq R\}\;\;\;\;\;R=8c\|u_0\|_{s,2},
$$
from the above estimates, one sees that if $ v, w\in \overline{B_{s,T}(R})$
$$
|||\Psi_{u_0}(v) |||_{s,T}\leq c\|u_0\|_{s,2}+cT(R^{N_1-1}+R^{N_2-1})R,
$$
and
$$
||| \Psi_{u_0}(v)-\Psi_{u_0}(w)|||_{s,T}\leq 2cT(R^{N_1-1}+R^{N_2-1})|||v-w|||_{s,T}.
$$

By fixing $T=T^*$ such that
\begin{equation}
\label{007}
2cT^*(R^{N_1-1}+R^{N_2-1})\leq 1/4,
\end{equation}
it follows that $\Psi_{u_0}$ defines a contraction map in $\overline{B_{s,T^*}(8c\|u_0\|_{s,2})}$, which yields the solution $u \in C([-T^*,T^*]:H^{s}(\mathbb R^n))\cap C([-T^*,T^*]:H^{{s}-2\alpha}(\mathbb R^n))$.

Next,  we consider the regularity of the map data $\to $ solution of the \eqref{int-eq1}, i.e. $u_0\to u(x,t)=u_{u_0}$ where  $u_{u_0}$ is the solution of \eqref{int-eq1} obtained by the fixed point argument above. Thus, we consider the operator
$$
\Lambda : H^{s}(\mathbb R^n)\times \mathcal X_{s}^T\to \mathcal X_{s}^T,
$$
defined as
$$
\Lambda(v_0,w)=w-\Psi_{v_0}(w),
$$
so that $\Lambda (u_0, u_{u_0})=0$. We observe that 
$$
D_{w}\Lambda(v_0,w)v=v-i\int_0^{t} e^{i(t-t')(L)^{\alpha}}(\partial_uP(w,\overline{w})v+
\partial_{\overline{u}}P(w,\overline{w})\overline{v})(t')dt'.
$$
From \eqref{007}, it follows that $D_{w}\Lambda(u_0,u_{u_0})$ is an invertible operator from $ \mathcal X_{s}^T$ to itself. Hence, 
 the implicit function theorem proves that the map data $\to$ solution, i.e.  $u_0\to u=u_{u_0}$ is locally defined and smooth (analytic) from $H^s(\mathbb R^n)$ into $C([-T^*,T^*]:H^{s}(\mathbb R^n))$.

Finally, we observe that  the  solution $$u \in C([-T^*,T^*]:H^{s}(\mathbb R^n))\cap C^1((-T^*,T^*):H^{s-2\alpha}(\mathbb R^n))$$ is regular enough (classical), hence it solves  the non-local differential equation in \eqref{fNLS}.

 \end{proof}

 \begin{proof} [Proof of Theorem \ref{A1a}]

The proof follows the artificial viscosity method. We shall sketch the main points, for details we refer to \cite{LiPo1}. 

\underline{Step 1 : The associated viscous problem :} For $\epsilon>0$ consider the IVP
\begin{equation}
 \label {fNLS-v}
\begin{cases}
 \begin{aligned}
 & \partial_t u =-\epsilon L^2 u+i L^{\alpha}u+Q(u,\bar{u}, \nabla u,\nabla \bar{u})=0,\;\;\;t\geq 0,\\
 &u(x,0)=u_0(x),
 \end{aligned}
 \end{cases}
 \end{equation}
and its equivalent integral form
\begin{equation}
\label{int-v}
u(t)=U(t)u_0+\int_0^t U(t-t')Q(u,\bar{u}, \nabla u,\nabla \bar{u})(t')dt',
\end{equation}
with the notation
\begin{equation}
\label{group-v}
U(t)f=U_{\epsilon,\alpha}(t)f=e^{t(-\epsilon L^2 +iL^{\alpha})}f.
\end{equation}

To obtain the solution $u^{\epsilon}(x,t)$ of the problem \eqref{int-v}, in a time interval depending on $\epsilon$, we define the operator
\begin{equation}
\label{ope-v}
\Phi (u)(t)=U(t)u_0+\int_0^t U(t-t')Q(u,\bar{u}, \nabla u,\nabla \bar{u})(t')dt',
\end{equation}
with domain 
$$
\mathcal X_{s}^T=\{v:\mathbb R^n\times [0,T]\to \mathbb C\:\; :\;v\in C([0,T]:H^{s}(\mathbb R^n))\},
$$
$s$ as in the statement of Theorem \ref{A1a}, and norm
$$
|||v|||_{s,T}=\sup_{[0,T]}\|v(t)\|_{s,2}.
$$

Applying the operator $L^{s/2}$ to  the equation in \eqref{ope-v} and recalling from Corollary \ref{abcd} that
$$
\|L^{s/2}f\|_{2}+\|f\|_2\sim \|f\|_{s,2},\;\;\;\;\,f\in H^{s}(\mathbb R^n),
$$
we get using Proposition \ref{2.14.b} that
$$
\aligned
&L^{s/2}\Phi (u)(t)\\
&=L^{s/2}U(t)u_0+\int_0^t L^{s/2}U(t-t')Q(u,...)(t')dt'\\
&=U(t)L^{s/2}u_0+\int_0^t LU(t-t')L^{s/2-1}Q(u,...)(t')dt'
\endaligned
$$

Hence, taking the $L^2$-norm above, a combination of  the estimates in Corollary \ref{remark-future}, and its comments, and an inequality consequence of \eqref{KP}
$$
\|Q(u,\bar{u},\nabla u,\nabla \bar{u})\|_{s-1,2}\leq c ( |||u|||_{s,2}^{N_1}+|||u|||_{s,2}^{N_2})
$$ 
which leads to the estimate
$$
|||\Phi (u)|||_{s,T}\leq c\|u_0\|_{s,2}+\frac{c\,T^{1/2}}{\epsilon^{1/2}}( |||u|||_{s,2}^{N_1}+|||u|||_{s,2}^{N_2}).
$$

A similar argument yields the inequality
$$
\aligned
&|||\Phi (u)-\Phi(v)|||_{s,T}\\
&\leq \frac{c\,T^{1/2}}{\epsilon^{1/2}}( |||u|||_{s,2}^{N_1-1}+|||u|||_{s,2}^{N_2-1}+|||v|||_{s,2}^{N_1-1}+|||v|||_{s,2}^{N_2-1})|||u-v|||_{s,T}.
\endaligned
$$

These estimates guarantee : $\,\exists \,T=T(\|u_0\|_{s,2};N_1;N_2;\epsilon)>0$ with $T(\|u_0\|_{s,2};N_1;N_2;\epsilon)\downarrow 0$ as $\epsilon\downarrow 0$, such that the operator $\Phi$ has a fixed point $u^{\epsilon}=u^{\epsilon}(x,t)$ in 
$$
\overline{B_{s,T}(R)}=\{v\in\mathcal X_{s}^T\;:\;|||v|||_{s,T}\leq 8c\|u_0\|_{s,2}\}
$$
which is the solution of the IVP \eqref{fNLS-v}.  Due to our hypothesis on $J$ and the fact that
$$
\int_0^T \|LL^{s/2}u^{\epsilon}(t)\|_2dt<\infty,
$$
one has that
\begin{equation}
\label{extra-reg}
u^{\epsilon}\in C([0,T]:H^{s}(\mathbb R^n)\cap C((0,T]:H^{s+2}(\mathbb R^n)).
\end{equation}

Moreover, we can reapply this argument to extend the solution from the time interval $[0,\tilde T]$ to $[0,\tilde T+T]$ as long as the solution 
$u^{\epsilon}=u^{\epsilon}(x,t)$ remains in the set $\overline{B_{s,\tilde T}(R)}$ satisfying \eqref{extra-reg}..
 
 \underline{Step 2 : A priori estimate independent of $\epsilon>0$.}  Now, we shall deduce an a priori estimate for the life span of solution $u=u^{\epsilon}=u^{\epsilon}(x,t)$ of the IVP \eqref{fNLS-v} independent of the parameter $\epsilon>0$. To obtain it, we shall use energy estimates on solutions of the  IVP \eqref{fNLS-v}.  Thus, we apply the operator $L^{s/2}$ to the equation in  \eqref{fNLS-v}, multiply the result  by $\overline{L^{s/2}u}$, integrate the result in $\mathbb R^n$ and take the real part.  Here, we use Lemma \ref{2.9}, i.e. $(L)^{s/2}=L^{s/2}$ with $s$ an even integrar. We notice that the arguments at the end of the previous section show that
 $$
 Re\Big (-\epsilon\int L^{s/2}L^{2}u\overline{L^{s/2}u} dx+i\int L^{s/2}L^{\alpha}u\overline{L^{s/2}u}dx\Big)\leq 0,
 $$
 with $u=u^{\epsilon}$. Here we shall use the regularity of $u=u^{\epsilon}$ deduced in \eqref{extra-reg}. 
 To estimate the non-linear terms,  we shall only be concerned with those terms involving only derivatives strictly larger than $s$. Thus, we write
 $$
 \aligned
 &LQ(u,\bar{u}, \nabla u,\nabla \bar{u})=\partial_{x_j}(a_{jk}(x)\partial_{x_k}Q(u,..))\\
 &=\partial_{x_j}(a_{jk}(x) (\partial_{\partial_{x_l}u}Q(u,..)\partial^2_{x_lx_k}u +\partial_{\partial_{x_l}\overline{u}}Q(u,..)\partial^2_{x_lx_k}\overline{u}))+E_2\\
& =\partial_{\partial_{x_l}u}Q(u,..)\partial_{x_j}(a_{jk}(x)\partial^2_{x_lx_k}u) +\partial_{\partial_{x_l}\overline{u}}Q(u,..)\partial_{x_j}(a_{jk}(x)\partial^2_{x_lx_k}\overline{u})+ E_2 \\
 &=\partial_{\partial_{x_l}u}Q(u,..)L \partial_{x_l}u +\partial_{\partial_{x_l}\overline{u}}Q(u,..)L \partial_{x_l}\overline{u}+E_2\\
 & = A_1+A_2+E_{2},\endaligned
 $$
with $u=u^{\epsilon}$ and  where $E_{2}$ represents  terms involving derivatives of at most order $2$ of $u$.
 
 Therefore, using again Lemma \ref{2.9},
 $$
 \aligned
 &L^{s/2}Q(u,\bar{u}, \nabla u,\nabla \bar{u})\\
 &=\partial_{\partial_{x_l}u}Q(u,..)L^{s/2} \partial_{x_l}u +\partial_{\partial_{x_l}\overline{u}}Q(u,..)L^{s/2} \partial_{x_l}\overline{u}+E_{s},
 \endaligned
 $$
 with $u=u^{\epsilon}$ and where $E_{s}$ represent  terms involving at most derivatives of order $s$ which satisfies that
 $$
 \| E_{s}\|_2\leq  c (\|u\|_{s,2}^{N_1}+\|u\|_{s,2}^{N_2}).
 $$
 By integration by parts and the hypothesis \eqref{key-hyp-ener1} one has that
 $$
\aligned
&|\text{Re} \int \partial_{\partial_{x_l}u}Q(u,..)L^{s/2} \partial_{x_l}u \,L^{s/2} \overline{u}\,dx|\\
&=|-\frac{1}{2} 
  \int \partial_{x_l}( \partial_{\partial_{x_l}u}Q(u,..))|L^{s/2} u |^2dx| +E_s\\
 & \leq c (\|u\|_{s,2}^{N_1}+\|u\|_{s,2}^{N_2})
\endaligned
  $$
with $u=u^{\epsilon}$ and  where $N_1, N_2$ were defined in  \eqref{key-hyp-ener2}. Also, one has that
 $$
\aligned
&| \int \partial_{\partial_{x_l}\overline u}Q(u,..)L^{s/2} \partial_{x_l}\overline{u} \,L^{s/2} \overline{u}\,dx|\\
&=|-\frac{1}{2} 
  \int \partial_{x_l}( \partial_{\partial_{x_l}\overline{u}}Q(u,..))(L^{s/2} \overline{u})^2dx|\\
 & \leq c (\|u\|_{s,2}^{N_1}+\|u\|_{s,2}^{N_2}).
\endaligned
  $$
  
  Collecting the above information we obtain the energy estimate
  $$
  \frac{d}{dt} \|L^{s/2}u^{\epsilon}(t)\|_2 \leq c(\|u^{\epsilon}(t)\|_{s,2}^2+\|u^{\epsilon}(t)\|_{s,2}^{N_2}),
  $$
 which combined with \eqref{r3} shows : $\,\exists \,T^*=T^*(\|u_0\|_{s,2};N_1;N_2;n)>0$ such that
 \begin{equation}
 \label{abc}
 \sup_{[0,T^*]}\|u^{\epsilon}(t)\|_{s,2} \leq 8c\|u_0\|_{s,2}.
 \end{equation}

 This allows us to reapply step 1 and thus extend the local solutions $u^{\epsilon}(x,t)$ to the whole time interval $[0,T^*]$ (independent of $\epsilon$) in the class 
 $C([0,T^*]:H^{s}(\mathbb R^n))$ satisfying the uniform estimate  in  \eqref{abc} for $\epsilon\in (0,1)$.
 
  \underline{Step 3 : Convergence of the $u^{\epsilon}$ as $\epsilon  \downarrow 0$.} 
  
  For $\epsilon>\epsilon'>0$ we define 
  $$
  w(x,t)=w^{\epsilon,\epsilon'}(x,t)=(u^{\epsilon}-u^{\epsilon'})(x,t),
  $$
  which satisfies 
  $$
  \partial_tw-\epsilon' L^2w -i L^{\alpha} w =Q(u^{\epsilon},..)-Q(u^{\epsilon'},..)+(\epsilon-\epsilon')L^2u^{\epsilon},
  $$
  with  $w(x,0)=0$. Using the energy estimate as above it follows that 
  $$
  \frac{d}{dt}\|w(t)\|_{2,2}\leq c(1+\|u^{\epsilon}\|_{s,2}+\|u^{\epsilon'}\|_{s,2})^{N_2} \|w(t)\|_{2,2}+(\epsilon-\epsilon')\|u^{\epsilon}(t)\|_{6,2}.
  $$
  Hence, using \eqref{abc} with $s=6$ one gets that
 $$
 \sup_{[0,T]}\|w(t)\|_{2,2}\leq c(\epsilon-\epsilon')\|u_0\|_{6,2} T^* e^{cT^*(1+\|u_0\|_{s,2})^{N_2}}\leq K(\epsilon-\epsilon').
 $$
 
 Hence, the $u^{\epsilon}$ converges to $u(x,t)$  in $C([0,T^*]:H^2(\mathbb R^n))$ as  $\epsilon  \downarrow 0$. This combined with the a priori estimate \eqref{abc} 
 shows that that for any $\delta>0$
 $$
 u\in C([0,T^*]:H^{s-\delta}(\mathbb R^n))\cap L^{\infty}([0,T^*]:H^{s}(\mathbb R^n)),
 $$ 
 and that $u(x,t)$ solves the IVP \eqref{fNLSa}.
 
 Finally, as we have previously  remarked,  the proof of the persistence property, i.e.  $u\in C([0,T^*]:H^{s}(\mathbb R^n))$, and the (strong) continuous dependence of the solution $u(x,t)$ upon the data $u_0$ can be established by following the Bona-Smith argument  in \cite{BoSm}, for details we refer to \cite{LiPo1}.
  \end{proof}
  
\section{Proof of Theorems \ref{A2}}
\subsection{Extension to a local problem in the upper-half space.} In this subsection, we recall the Stinga-Torrea extension to the upper-half space $\mathbb R^{n+1}_+=\mathbb R^n\times(0,\infty)$ in \cite{StiTorCPDE2010}. This extension generalizes the Caffarelli-Silvestre extension for the fractional Laplacian \cite{CaSi} to more general self-adjoint operators $L$ like the ones considered in Theorem \ref{A3}.

Let $\alpha\in (0,1)$. Let $L$ be a non-negative second order elliptic operator as in \eqref{L} satisfying the hypothesis of Theorem \ref{A3}. Then, as it was shown in section 2, $L: H^2(\mathbb R^n) \subseteq L^2(\mathbb R^n) \to L^2(\mathbb R^n)$ is an unbounded, non-negative, densely defined, self-adjoint operator. Moreover,  for $u \in Dom(L^{\alpha})=H^{2\alpha}(\mathbb R^n)$ (see Theorem \ref{2.1} and  \cite{StiTorCPDE2010})), we define for $y\in[0,\infty)$,
\begin{equation}
\label{def:U}
\begin{aligned}
U(x,y)&=\frac{1}{\Gamma(\alpha)}\,\int_0^{\infty} e^{-tL}\,({L}^{\alpha}u)(x)\,e^{-y^2/4t}\,\frac{dt}{t^{1-\alpha}}\\
&=\frac{y^{2\alpha}}{4^{\alpha} \Gamma(\alpha)}\,\int_0^{\infty} e^{-t{L}}u(x)\,e^{-y^2/4t}\,\frac{dt}{t^{1+\alpha}}\\
\end{aligned}
\end{equation}

Thus, by Theorem 1.1 in  \cite{StiTorCPDE2010} one has 
\begin{equation}
\label{prop:U}
U \in C^{\infty}((0,\infty): H^{2\alpha}(\mathbb R^n)) \cap C^{0}([0,\infty):L^2(\mathbb R^n)),
\end{equation}
with
 \begin{equation}
 \label{eq:U}
 \begin{aligned}
 \begin{cases}
 & -{L}_xU+\dfrac{1-2\alpha}{y}\,\partial_yU+\partial_y^2U=0,\;\;\;\;\;\;(x,y)\in \mathbb R^{n+1}_+,\\
 & U(x,0)=u(x), \quad x \in \mathbb R^n ,
 \end{cases}
 \end{aligned}\quad\quad
 \end{equation}
 and 
\begin{equation}
\label{prop:U.2}
{L}^{\alpha}u(x)=  - c^*_{\alpha}\,\lim_{y\downarrow 0} \,y^{1-2\alpha} \,\partial_yU(x,y), \;\;\;\;\;\;\quad \text{in} \ L^2(\mathbb R^n),
\end{equation}
where $\,c^*_{\alpha}=4^{\alpha}\Gamma(\alpha)\left(2\alpha\Gamma(-\alpha)\right)^{-1}$.

 As it was remarked in \cite{StiTorCPDE2010} the identities above should be understood in $L^2(\mathbb R^n)$. Moreover,  $U(x,y)$  solves the singular boundary value problem \eqref{eq:U}, the first equality in \eqref{def:U} involves the heat semi-group $e^{-t{L}}$ acting on ${L}^{\alpha}u$ while the second does not involve fractional powers of ${L}$.  Thus,
 \eqref{prop:U.2} describes the fractional powers of ${L}$ in terms of the solution of the boundary value problem \eqref{eq:U} which is the original idea in \cite{CaSi}.

Notice that by using the special form of ${L}$, the equation in \eqref{eq:U} can be written in the form
 \begin{equation}
 \label{eq:U.Dir}
 \begin{aligned}
 \begin{cases}
 & \partial_y(y^{1-2\alpha}\partial_yU)+\nabla_{x} \cdot (y^{1-2\alpha}a(x)\nabla_{x}U)\\
 &\quad\quad\quad\quad\quad\quad\quad-y^{1-2\alpha}c(x)U=0 , \;\;\;\ (x,y)\in \mathbb R^{n+1}_+,\\
 & U(x,0)=u(x), \quad\quad\quad\quad\quad\quad\quad\quad\quad\quad\quad x \in \mathbb R^n ,
 \end{cases}
 \end{aligned}
 \end{equation}
 
 Finally, we claim a regularity property for the solution $U$ of the Dirichlet problem \eqref{eq:U.Dir}. 
 \begin{proposition} \label{propo:reg:U}
 Let $u \in H^{2\alpha}(\mathbb R^n)$. Then, the solution $U$ of \eqref{eq:U.Dir} given by \eqref{def:U} belongs to  $\dot{H}^1(\mathbb R_+^{n+1},y^{1-2\alpha}dxdy) \cap L^2_{loc}(\mathbb R_+^{n+1},y^{1-2\alpha}dxdy)$. Moreover,  
 \begin{equation} \label{propo:reg:U.1} 
\int_{\mathbb R^{n+1}_+} y^{1-2\alpha} \left((\partial_yU)^2+\left| \nabla_xU\right|^2 \right)dxdy \lesssim \|u\|_{L^2}^2+\|{L}^{\alpha}u\|_{L^2}^2 \lesssim \|u\|_{H^{2\alpha}}^2 .
 \end{equation}
 \end{proposition}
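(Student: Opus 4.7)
The plan is to use the spectral representation of $L$ (Theorem \ref{spec-repr}) to reduce the weighted $L^2$ bounds on $U$ to two explicit one-dimensional integrals. The first step is to identify $U$ with an explicit spectral multiplier of $u$. Substituting the classical Bessel identity
\[
\int_0^\infty t^{-1-\alpha}e^{-t\lambda - y^2/(4t)}\,dt \;=\; 2\bigl(2\sqrt\lambda/y\bigr)^\alpha K_\alpha(y\sqrt\lambda)
\]
into the second line of \eqref{def:U} and using the spectral calculus to exchange $e^{-tL}$ with multiplication by $e^{-t\lambda}$ gives $U(x,y)=\varphi_\alpha(y,L)u(x)$, with $\varphi_\alpha(y,\lambda)=h_\alpha(y\sqrt\lambda)$ and $h_\alpha(z)=\tfrac{2^{1-\alpha}}{\Gamma(\alpha)}\,z^\alpha K_\alpha(z)$. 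The function $\varphi_\alpha(\cdot,\lambda)$ is the bounded solution of $\varphi''+\tfrac{1-2\alpha}{y}\varphi'=\lambda\varphi$ with $\varphi(0,\lambda)=1$; a maximum principle on this ODE (or directly from the positivity of $K_\alpha$) yields $0\le\varphi_\alpha(y,\lambda)\le 1$, with exponential decay in $y\sqrt\lambda$.

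Next, the self-similarity $\varphi_\alpha(y,\lambda)=h_\alpha(y\sqrt\lambda)$ and the change of variable $z=y\sqrt\lambda$ yield
\[
\int_0^\infty y^{1-2\alpha}|\partial_y\varphi_\alpha(y,\lambda)|^2\,dy=A_\alpha\,\lambda^\alpha,\qquad \int_0^\infty y^{1-2\alpha}\,\lambda\,|\varphi_\alpha(y,\lambda)|^2\,dy=B_\alpha\,\lambda^\alpha,
\]
with $A_\alpha=\int_0^\infty z^{1-2\alpha}|h_\alpha'(z)|^2\,dz$ and $B_\alpha=\int_0^\infty z^{1-2\alpha}|h_\alpha(z)|^2\,dz$ both finite, since $h_\alpha$ is smooth and bounded on $[0,\infty)$ with $|h_\alpha'(z)|\sim z^{2\alpha-1}$ as $z\to 0^+$ and exponential decay at infinity. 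The spectral theorem together with Fubini--Tonelli then gives
\[
\int_{\mathbb R^{n+1}_+} y^{1-2\alpha}(\partial_yU)^2\,dxdy=A_\alpha\|L^{\alpha/2}u\|_{L^2}^2,\qquad \int_0^\infty y^{1-2\alpha}\|L^{1/2}U(\cdot,y)\|_{L^2_x}^2\,dy=B_\alpha\|L^{\alpha/2}u\|_{L^2}^2.
\]
Ellipticity converts the $L^{1/2}$ bound into a gradient bound: for $f$ in the form domain of $L$, hypotheses \eqref{hyp2}--\eqref{hyp3} give $\|L^{1/2}f\|_{L^2}^2=\int a_{jk}\partial_jf\,\partial_kf\,dx+\int c|f|^2\,dx\ge\lambda\|\nabla f\|_{L^2}^2$, and applying this pointwise in $y$ to $f=U(\cdot,y)$ yields $\int y^{1-2\alpha}|\nabla_xU|^2\,dxdy\le\lambda^{-1}B_\alpha\|L^{\alpha/2}u\|_{L^2}^2$.

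Combining the two bounds and using the elementary inequality $\lambda^\alpha\le 1+\lambda^{2\alpha}$ together with Theorem \ref{2.1} gives $\|L^{\alpha/2}u\|_{L^2}^2\le\|u\|_{L^2}^2+\|L^\alpha u\|_{L^2}^2\lesssim\|u\|_{H^{2\alpha}}^2$, which establishes \eqref{propo:reg:U.1}. For the $L^2_{loc}$ statement, $|\varphi_\alpha|\le 1$ and the spectral theorem give $\|U(\cdot,y)\|_{L^2_x}\le\|u\|_{L^2}$ for every $y\ge 0$, so $\int_0^Y y^{1-2\alpha}\|U(\cdot,y)\|_{L^2_x}^2\,dy\lesssim Y^{2-2\alpha}\|u\|_{L^2}^2<\infty$ for every $Y>0$. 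The main technical point is the first step: cleanly writing the Stinga--Torrea kernel as the explicit spectral multiplier $\varphi_\alpha(y,L)$ and checking the Bessel asymptotics $h_\alpha(0)=1$, $|h_\alpha'(z)|\sim z^{2\alpha-1}$ that are needed for $A_\alpha$ and $B_\alpha$ to be finite; once this is in place, the spectral theorem, Fubini, the ellipticity inequality, and the comparison with $\|u\|_{H^{2\alpha}}$ are essentially automatic.
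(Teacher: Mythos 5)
Your proposal is correct but takes a genuinely different route from the paper. The paper's proof never passes to an explicit spectral multiplier; instead it cites the Stinga--Torrea bounds $\sup_{y>0}\|U(\cdot,y)\|_{L^2}\le\|u\|_{L^2}$ and $\sup_{y>0}\|y^{1-2\alpha}\partial_yU(\cdot,y)\|_{L^2}\lesssim\|L^{\alpha}u\|_{L^2}$ (Lemma \ref{lemm:reg:U}), combines the pointwise identity
$\partial_y(y^{1-2\alpha}\partial_y(U^2))+\nabla_x\cdot(y^{1-2\alpha}a\nabla_x(U^2))=2y^{1-2\alpha}\bigl((\partial_yU)^2+a\nabla_xU\cdot\nabla_xU+cU^2\bigr)$
with the divergence theorem on $\{|x|<r\}\times[\epsilon,\delta]$, and then argues that the lateral boundary term vanishes as $R\to\infty$ using a Cacciopoli estimate (Lemma \ref{lemm:Cacciopoli_bis}) together with dominated convergence, before finally letting $\epsilon\to 0$ and $\delta\to\infty$. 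You instead diagonalize the extension directly: writing $U(\cdot,y)=\varphi_\alpha(y,L)u$ with $\varphi_\alpha(y,\lambda)=h_\alpha(y\sqrt\lambda)$ and $h_\alpha(z)=\tfrac{2^{1-\alpha}}{\Gamma(\alpha)}z^\alpha K_\alpha(z)$ reduces both integrals, via the self-similar scaling and Tonelli, to the universal one-dimensional constants $A_\alpha$ and $B_\alpha$, and the only place the variable coefficients enter is the single ellipticity inequality $\|\nabla_x f\|_{L^2}^2\le\lambda^{-1}\|L^{1/2}f\|_{L^2}^2$ applied fiberwise in $y$. Your computation gives the $\partial_y$-piece as an exact identity $A_\alpha\|L^{\alpha/2}u\|_{L^2}^2$ and avoids the Cacciopoli/divergence-theorem machinery and the three limiting passages. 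The paper's approach is less tied to the spectral calculus and would survive in settings where $\varphi_\alpha$ cannot be identified explicitly, but for the present operator both yield \eqref{propo:reg:U.1}. One small caveat: for $\alpha<1/2$ the function $h_\alpha$ is not $C^1$ at $z=0$ (since $h_\alpha'(z)\sim z^{2\alpha-1}$ blows up there), so ``smooth and bounded on $[0,\infty)$'' should read ``continuous on $[0,\infty)$, smooth on $(0,\infty)$''; this does not affect the finiteness of $A_\alpha,B_\alpha$ because $z^{1-2\alpha}|h_\alpha'(z)|^2\sim z^{2\alpha-1}$ is integrable near $0$ for all $\alpha\in(0,1)$. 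You should also note, to justify the fiberwise ellipticity step, that $U(\cdot,y)\in D(L^{1/2})$ for each $y>0$; this holds because $\lambda|\varphi_\alpha(y,\lambda)|^2$ is bounded in $\lambda$ for fixed $y>0$ by the exponential decay of $K_\alpha$.
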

 
 \begin{remark} \label{weak:sol:Dirichlet}
One could try to extend the definition of ${L}^{\alpha}u$ to functions $u$ in $H^{\alpha}(\mathbb R^n)$ as follows. For $u\in H^{\alpha}(\mathbb R^n)$, define a weak solution $U \in \dot{H}^1(\mathbb R_+^{n+1},y^{1-2\alpha}dxdy) \cap L^2_{loc}(\mathbb R_+^{n+1},y^{1-2\alpha}dxdy) $ of the Dirichlet problem \eqref{eq:U}. Then, prove that the limit $\lim_{y\downarrow 0} \,y^{1-2\alpha} \,\partial_yU(x,y)$ exists in $H^{-\alpha}(\mathbb R^n)$. Finally, define ${L}^{\alpha}u \in H^{-\alpha}(\mathbb R^n)$ by \eqref{prop:U.2}.
\end{remark}

Before proving Proposition \ref{propo:reg:U}, we recall the following regularity results on $U$ obtained in \cite{StiTorCPDE2010}. 
\begin{lemma} \label{lemm:reg:U}
Let $u \in H^{2\alpha}(\mathbb R^n)$. Then, 
\begin{align}
\sup_{y>0}\|U(\cdot,y)\|_{L^2} &\le \|u\|_{L^2} \label{lemm:reg:U.1}, \\ 
\sup_{y>0}\|y^{1-2\alpha}\partial_yU(\cdot,y)\|_{L^2} &\lesssim \|\mathcal{L}^{\alpha}u\|_{L^2} .\label{lemm:reg:U.2}
\end{align}
\end{lemma}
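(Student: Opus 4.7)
The plan is to derive both estimates directly from the two representation formulas in \eqref{def:U} together with a single input from spectral theory: since $L$ is a nonnegative self-adjoint operator on $L^2(\mathbb R^n)$, the functional calculus gives $\|e^{-tL}f\|_2 \le \|f\|_2$ for every $t \ge 0$ and $f \in L^2(\mathbb R^n)$ (because $|e^{-ts}| \le 1$ on $\sigma(L) \subseteq [0,\infty)$). Combined with Minkowski's inequality for Bochner integrals, this reduces both bounds to the explicit evaluation of a single scalar $t$-integral, which is handled by the change of variables $s = y^2/(4t)$.

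For \eqref{lemm:reg:U.1} I would use the \emph{second} identity in \eqref{def:U}, which expresses $U$ in terms of $u$ itself rather than $L^{\alpha}u$. Minkowski in the $t$-variable combined with $L^2$-contractivity of $e^{-tL}$ yields, for each $y>0$,
\begin{equation*}
\|U(\cdot,y)\|_2 \le \frac{y^{2\alpha}}{4^{\alpha}\Gamma(\alpha)}\,\|u\|_2 \int_0^{\infty} e^{-y^2/(4t)}\,\frac{dt}{t^{1+\alpha}}.
\end{equation*}
The substitution $s=y^2/(4t)$ converts the remaining integral into $4^{\alpha}\,y^{-2\alpha}\,\Gamma(\alpha)$, which cancels the prefactor exactly and gives $\|U(\cdot,y)\|_2 \le \|u\|_2$.

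For \eqref{lemm:reg:U.2} I would start from the \emph{first} identity in \eqref{def:U}. Differentiating under the integral sign in $y$ (justified by dominated convergence: for $y$ in a compact subset of $(0,\infty)$ the $y$-derivative of the Gaussian factor is dominated, in $L^2$-norm, by $\|L^{\alpha}u\|_2$ times a fixed function of $t$ integrable against $t^{\alpha-1}\,dt$) yields
\begin{equation*}
\partial_y U(x,y) = -\frac{y}{2\Gamma(\alpha)}\int_0^{\infty} e^{-tL}(L^{\alpha}u)(x)\,e^{-y^2/(4t)}\,\frac{dt}{t^{2-\alpha}}.
\end{equation*}
Multiplying by $y^{1-2\alpha}$, applying Minkowski and contractivity, and then the same substitution $s=y^2/(4t)$ reduces matters to $\int_0^{\infty} e^{-s}\,s^{-\alpha}\,ds = \Gamma(1-\alpha)$, producing
\begin{equation*}
\|y^{1-2\alpha}\partial_y U(\cdot,y)\|_2 \le \frac{4^{1-\alpha}\Gamma(1-\alpha)}{2\Gamma(\alpha)}\,\|L^{\alpha}u\|_2
\end{equation*}
uniformly in $y>0$, which is \eqref{lemm:reg:U.2}.

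The argument is essentially computational and I do not foresee a serious analytic obstacle; the only points requiring a little care are (i) interpreting the expressions in \eqref{def:U} as Bochner integrals with values in $L^2(\mathbb R^n)$, which is already implicit in \eqref{prop:U} and uses strong continuity of $e^{-tL}$ together with contractivity, and (ii) the justification of differentiation under the integral sign in the second estimate, for which dominated convergence on any compact $y$-subinterval of $(0,\infty)$ suffices since the Gaussian factor $e^{-y^2/(4t)}$ and its $y$-derivative decay rapidly as $t\downarrow 0$ while behaving like $1$ and $y/(2t)$ respectively for large $t$.
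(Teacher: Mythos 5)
Your proof is correct and follows essentially the same route as the paper's: the paper cites Stinga--Torrea's Step 1 for \eqref{lemm:reg:U.1} and their formula (2.5) for the derivative formula, then applies Minkowski, semigroup contractivity, and the substitution $r=y^2/(4t)$ exactly as you do. The only difference is that you spell out the steps that the paper delegates to the reference (the computation behind \eqref{lemm:reg:U.1} and the differentiation under the integral sign), which makes your version self-contained but not a different argument.
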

\begin{proof}
Estimate \eqref{lemm:reg:U.1} is proved in Step 1 of the proof of Theorem 1.1 in \cite{StiTorCPDE2010} on page 2097.

To prove \eqref{lemm:reg:U.2}, we start with formula (2.5) on page 2098 in \cite{StiTorCPDE2010} (which holds in $L^2(\mathbb R^n)$) and use the change of variable $r=y^2/(4t)$ to deduce that, for all $x \in \mathbb R^n$, $y>0$,
\begin{align*} 
y^{1-2\alpha}\partial_yU(x,y)&=\frac{-1}{\Gamma(\alpha)} \int_0^{+\infty} e^{-t{L}}(L^{\alpha}u)(x)y^{2-2\alpha} \frac{e^{-\frac{y^2}{4t}}}{2t^{2-\alpha}}dt \\ 
&=\frac{2}{4^{\alpha}\Gamma(\alpha)} \int_0^{+\infty} e^{-\frac{y^2}{4r}{L}}({L}^{\alpha}u)(x) \frac{e^{-r}}{r^{\alpha}}dr .
\end{align*}
Thus, it follows from the Minkowski inequality and the fact that $e^{-t{L}}$ is a semi-group of contractions ($\|e^{-t{L}}f\|_{L^2} \le \|f\|_{L^2}$, for any $f \in L^2(\mathbb R^n)$), that 
\begin{align*} 
\left\|y^{1-2\alpha}\partial_yU(\cdot,y)\right\|_{L^2}& \le \frac{2}{4^{\alpha}\Gamma(\alpha)} \int_0^{+\infty} \|e^{-\frac{y^2}{4r}L}({L}^{\alpha}u)\|_{L^2} \frac{e^{-r}}{r^{\alpha}}dr\\
& \le  \frac{2\Gamma(1-\alpha)}{4^{\alpha}\Gamma(\alpha)}\|{L}^{\alpha}u\|_{L^2} ,
\end{align*}
for all $y>0$, which concludes the proof of \eqref{lemm:reg:U.2}.
\end{proof}

We recall the Cacciopoli estimate which follows from (2.3.2) in \cite{FKS} (we also refer to the proof of Lemma \ref{lemm:Cacciopoli} below). 
\begin{lemma}[Cacciopoli estimate] \label{lemm:Cacciopoli_bis}
Let $\alpha \in (0,1)$, ${\bf x}_0=(x_0,y_0) \in \mathbb R^{n+1}_+$ and $\epsilon>0$ such that  
\begin{equation*} 
B_{\frac{\epsilon}2}({\bf x}_0) = \left\{ (x,y) \in \mathbb R^{n+1} : |(x-x_0,y-y_0)| < \frac{\epsilon}2 \right\} \subset \mathbb R^{n+1}_+. 
\end{equation*} 
Assume that $U \in H^1(B_{\frac{\epsilon}2}({\bf x}_0) ,y^{1-2\alpha}dxdy)$ is a solution of \eqref{eq:U.Dir} on $B_{\frac{\epsilon}2}({\bf x}_0) $. Then, there exists $C>0$ independent of ${\bf x}_0 \in \mathbb R^{n+1}_+$ and $\epsilon>0$ such that 
\begin{equation} \label{Caccipolli:est.1_bis}
\left\|  y^{\frac{1-2\alpha}2}\nabla_{x,y}U\right\|_{L^2(B_{\frac{\epsilon}4}({\bf x}_0) )} \le C \epsilon^{-1 }\left\|  y^{\frac{1-2\alpha}2} U\right\|_{L^2(B_{\frac{\epsilon}2}({\bf x}_0) )} .
\end{equation}
\end{lemma}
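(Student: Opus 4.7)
The plan is to carry out a standard energy estimate (Caccioppoli-type) for the weighted degenerate equation \eqref{eq:U.Dir}, exploiting the fact that the ball $B_{\epsilon/2}({\bf x}_0)$ lies in the open upper half-space, so the weight $y^{1-2\alpha}$ is smooth, uniformly bounded above and below on this ball, and we do not need to worry about boundary contributions at $\{y=0\}$.

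\textbf{Step 1: Cutoff function.} First I would select a smooth cutoff $\eta \in C_0^\infty(B_{\epsilon/2}({\bf x}_0))$ with $0\le \eta\le 1$, $\eta\equiv 1$ on $B_{\epsilon/4}({\bf x}_0)$, and $|\nabla_{x,y}\eta|\le C/\epsilon$. Since $\eta$ is compactly supported inside the ball (which sits inside $\mathbb R^{n+1}_+$), the function $\eta^2 U$ is an admissible test function for the weak formulation of \eqref{eq:U.Dir}.

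\textbf{Step 2: Weak formulation and integration by parts.} Writing \eqref{eq:U.Dir} compactly as $\nabla_{x,y}\cdot(y^{1-2\alpha} A(x)\nabla_{x,y}U)-y^{1-2\alpha}c(x)U=0$, where $A(x)$ is the $(n+1)\times(n+1)$ block-diagonal matrix $\mathrm{diag}(a(x),1)$, I would test against $\eta^2 U$ and integrate by parts. No boundary terms arise since $\eta$ vanishes on $\partial B_{\epsilon/2}({\bf x}_0)$. This yields
\begin{equation*}
\int y^{1-2\alpha}\eta^{2}\,\nabla_{x,y}U\cdot A(x)\nabla_{x,y}U\,dxdy+\int y^{1-2\alpha}c(x)\eta^{2}U^{2}\,dxdy=-2\int y^{1-2\alpha}\eta U\,\nabla_{x,y}\eta\cdot A(x)\nabla_{x,y}U\,dxdy.
\end{equation*}

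\textbf{Step 3: Ellipticity and absorption.} Next, since $c(x)\ge 0$ by \eqref{hyp3} the second term on the left is non-negative, and since $a(x)$ is uniformly elliptic with constant $\lambda>0$ by \eqref{hyp2}, so is $A(x)$. Hence the left side dominates $\lambda \int y^{1-2\alpha}\eta^{2}|\nabla_{x,y}U|^{2}$. On the right I would apply Cauchy--Schwarz (with a small parameter $\delta>0$) and Young's inequality:
\begin{equation*}
\Bigl|2\int y^{1-2\alpha}\eta U\,\nabla\eta\cdot A\nabla U\,dxdy\Bigr|\le \delta\int y^{1-2\alpha}\eta^{2}|\nabla U|^{2}\,dxdy+\frac{C}{\delta}\int y^{1-2\alpha}|\nabla\eta|^{2}U^{2}\,dxdy,
\end{equation*}
using boundedness of the entries of $A$. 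Choosing $\delta=\lambda/2$ to absorb the gradient term into the left-hand side, and using $|\nabla\eta|\le C/\epsilon$ together with $\eta\equiv 1$ on $B_{\epsilon/4}({\bf x}_0)$, I obtain
\begin{equation*}
\int_{B_{\epsilon/4}({\bf x}_0)}y^{1-2\alpha}|\nabla_{x,y}U|^{2}\,dxdy\le \frac{C}{\epsilon^{2}}\int_{B_{\epsilon/2}({\bf x}_0)}y^{1-2\alpha}U^{2}\,dxdy,
\end{equation*}
with $C$ independent of ${\bf x}_0$ and $\epsilon$. Taking square roots produces \eqref{Caccipolli:est.1_bis}.

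The main (minor) obstacle is merely bookkeeping: verifying that $\eta^2 U$ is a valid test function in the weak formulation of \eqref{eq:U.Dir} and that integration by parts is justified for $U\in H^{1}(B_{\epsilon/2}({\bf x}_0),y^{1-2\alpha}dxdy)$. Because $B_{\epsilon/2}({\bf x}_0)\subset \mathbb R^{n+1}_+$ is separated from $\{y=0\}$, the weight $y^{1-2\alpha}$ is comparable to a positive constant on this ball, so $H^{1}$ with this weight coincides with the unweighted $H^{1}$ on $B_{\epsilon/2}({\bf x}_0)$ and the standard approximation by smooth functions renders the above formal manipulations rigorous.
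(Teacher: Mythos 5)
Your argument is correct and matches the paper's approach: the paper disposes of this lemma by pointing to estimate (2.3.2) of \cite{FKS} and to its own proof of Lemma \ref{lemm:Cacciopoli}, which carries out exactly the cutoff/test-function/Cauchy--Schwarz/absorption computation you describe. One small point worth keeping is that you drop the zeroth-order $c$-term using $c\ge 0$, which is what yields a constant $C$ genuinely independent of $\epsilon$ as stated; the paper's Lemma \ref{lemm:Cacciopoli} instead retains a $\|c\|_{L^\infty}$ contribution, which is harmless there because the radius is bounded.
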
 

 We are now in position to prove Proposition \ref{propo:reg:U}.
 \begin{proof}[Proof of Proposition \ref{propo:reg:U}]
 The fact that $U \in L^2_{loc}(\mathbb R_+^{n+1},y^{1-2\alpha}dxdy)$ follows directly from \eqref{lemm:reg:U.1}, recalling $-1<1-2\alpha<1$. 
 
For $0<R_1<R_2$, $0<\epsilon<1<\delta$, we define 
\begin{equation*}
A_{R_1,R_2,\epsilon,\delta}=\left\{ (x,y) \in \mathbb R^{n+1}_+ : R_1<|x|<R_2 \ \text{and} \ \epsilon<y<\delta \right\} .
\end{equation*}
We claim that 
\begin{equation} \label{propo:reg:U.2a}
\int_{A_{R,R+1,\epsilon,\delta}} y^{1-2\alpha} \left|\nabla_x U \right|^2 dxdy \underset{R \to +\infty}{\longrightarrow} 0 .
\end{equation}
This is a consequence of the Cacciopoli estimate in Lemma \ref{lemm:Cacciopoli_bis} and a geometric argument. We cover $A_{R,R+1,\epsilon,\delta}$ by a finite number of balls of radius $\epsilon/4$ which remains $3\epsilon/4$ away from $\{y=0\}$. Observe that the cardinality of these balls only depends on $\epsilon$ and $\delta$ and that the resulting balls obtained after applying Lemma \ref{lemm:Cacciopoli_bis} stay $\epsilon/2$ away from the bottom. Then, it follows by applying \eqref{Caccipolli:est.1_bis}  and summing over all these balls that 
\begin{equation} \label{propo:reg:U.2}
\begin{aligned}
\int_{A_{R,R+1,\epsilon,\delta}} y^{1-2\alpha} &\left|\nabla_x U \right|^2 dxdy\\
& \le C_{\epsilon,\delta}\int_{A_{R-1,R+2,\frac{\epsilon}2,\delta+\frac{\epsilon}2}} y^{1-2\alpha}U^2 dxdy ,
\end{aligned}
\end{equation}
for a constant $C_{\epsilon,\delta}$ depending only on $\epsilon$ and $\delta$. The limit in \eqref{propo:reg:U.2a} follows then from the fact that $U \in L^{\infty}_yL^2_x$ and the dominated convergence theorem.

 Next, observe that
\begin{equation*} 
\begin{split}
 \partial_y(y^{1-2\alpha}\partial_y(U^2))+\nabla_{x} \cdot &(y^{1-2\alpha}a(x)\nabla_{x}(U^2))\\ & =2y^{1-2\alpha} \left( (\partial_yU)^2+a(x) \nabla_xU\cdot \nabla_xU +c(x)U^2\right) .
 \end{split}
\end{equation*}
Fix $0<R<r<R+1$ and $0<\epsilon<1<\delta$. We deduce from the divergence theorem that
\begin{align*} 
 \int_{\{|x| <r\}\times[\epsilon,\delta]}& y^{1-2\alpha}\left( (\partial_yU)^2+a(x) \nabla_xU\cdot \nabla_xU +c(x)U^2\right) dxdy \\ & =
 \int_{\partial(\{|x| <r\}\times[\epsilon,\delta])} y^{1-2\alpha} \left(U \partial_yU+Ua(x)\nabla_{x}U \right) \cdot \vec{n} d\sigma(x,y) ,
\end{align*}
where $\vec{n}$ denotes the outward pointing unit normal to  $\partial \left(\{|x| <r\}\times[\epsilon,\delta]\right)$ and $\sigma(x,y)$ its surface measure. Hence, by using the hypotheses on $a=a(x)$ and $c=c(x)$  and $\{|x|<R\} \subset \{ |x|<r \}$ (we assumed $R<r<R+1$), it follows that
\begin{align} \label{propo:reg:U.3}
 \lambda \int_{\{|x| <R\}\times[\epsilon,\delta]} y^{1-2\alpha}\left((\partial_yU)^2+ \left|\nabla_xU\right|^2 \right) dxdy \le I+II+III, 
\end{align}
where 
\begin{align*} 
I(r) &= \delta^{1-2\alpha} \left| \int_{\{|x|<r\}}   \left( U \partial_yU\right)(x,\delta) \, dx \right| , \\ 
II(r) & =\epsilon^{1-2\alpha} \left| \int_{\{|x|<r\}}   \left( U \partial_yU\right)(x,\epsilon) \, dx \right| , \\ 
III(r) & = \left| \int_{\epsilon}^{\delta} \int_{\{|x|=r\}}  y^{1-2\alpha}U a(x)\nabla_xU \cdot \vec{n}_x \, d\sigma_r(x)dy \right| ,
\end{align*}
where $\vec{n}_x$ denotes the outward pointing unit normal to  the sphere $\{|x|=r\}$ and $\sigma_r(x)$ its surface measure.

On the one hand, we observe by using \eqref{lemm:reg:U.1}-\eqref{lemm:reg:U.2} and the Cauchy-Schwarz inequality
\begin{align*}
I(r) & \le \|U(\cdot,\delta)\|_{L^2} \| \delta^{1-2\alpha}\partial_yU(\cdot,\delta)\|_{L^2} \lesssim \|u\|_{L^2}\|{L}^{\alpha}u\|_{L^2} ,\\ 
II(r) & \le \|U(\cdot,\epsilon)\|_{L^2} \| \epsilon^{1-2\alpha}\partial_yU(\cdot,\epsilon)\|_{L^2} \lesssim \|u\|_{L^2}\|{L}^{\alpha}u\|_{L^2}, 
\end{align*}
where the implicit constants are independent of $\epsilon$, $\delta$ and $R$. On the other hand, we have from the coarea formula and the Cauchy-Schwarz inequality 
\begin{align*} 
 \int_R^{R+1} III(r)  dr &  \le \|a\|_{L^{\infty}} \int_{A_{R,R+1,\epsilon,\delta}} y^{1-2\alpha} |U| \left| \nabla_xU \right| dxdy \\ 
 & \le \|a\|_{L^{\infty}} \left\|  y^{\frac{1-2\alpha}2} U\right\|_{L^2(A_{R,R+1,\epsilon,\delta})}\left\|  y^{\frac{1-2\alpha}2} \nabla_xU\right\|_{L^2(A_{R,R+1,\epsilon,\delta})} ,
\end{align*}
so that, by using \eqref{propo:reg:U.2}-\eqref{propo:reg:U.2a} and $U \in L^{\infty}_yL^2_x$, and hence $U \in L^2_{\epsilon<y<\delta}L^2_x$, for $\epsilon$, $\delta$ fixed,
\begin{align*} 
 \int_R^{R+1}  \left| III(r) \right|  dr \underset{R \to +\infty}{\longrightarrow} 0.
\end{align*}
Thus, it follows integrating \eqref{propo:reg:U.3} between $R$ and $R+1$, and then letting $R$ to $+\infty$  that, for any $0<\epsilon<\delta$, 
\begin{equation*}
\int_{\mathbb R^n \times[\epsilon,\delta]} y^{1-2\alpha}\left((\partial_yU)^2+ \left|\nabla_xU\right|^2 \right) dxdy \lesssim \|u\|_{L^2}\|\mathcal{L}^{\alpha}u\|_{L^2} .
\end{equation*}
Therefore, we conclude the proof of \eqref{propo:reg:U.1} by letting $\epsilon \to 0$, $\delta \to +\infty$ and using the fact that $\|{L}^{\alpha}u\|_{L^2} \sim \|u\|_{H^{2\alpha}}<\infty$ (see Theorem \ref{2.1}). 
\end{proof}

\subsection{Unique continuation for the extension problem} 
For $R>0$, we denote 
\begin{align} 
B_R^+&=\{(x,y) \in R^{n+1}_+ : |(x,y)| < R\} , \label{euc:ball} \\ 
\Gamma_R^0 &= \{(x,0) \in R^{n+1}_+ : |x| < R\} = B_R^+ \cap \{y=0\} , \label{def:Gamma0R}\\
\Gamma^+_R &=  \{(x,y) \in R^{n+1}_+ : |(x,y)| = R\}  . \label{def:Gamma+R}
\end{align}

\begin{definition} \label{def:weak_sol}
Let $0<\alpha<1$. Given $R>0$ and two functions $h \in L^1(\Gamma_R^0)$ and $f \in L^1(B_R^+)$, we say that a function $U$ is a weak solution of 
\begin{equation*}
 \begin{aligned}
 \begin{cases}
 \partial_y(y^{1-2\alpha}\partial_yU)+\nabla_{x} \cdot (y^{1-2\alpha}a(x)\nabla_{x}U)-y^{1-2\alpha}c(x)U={f} & \text{in}  \quad B_R^+,\\
- c^*_{\alpha}\lim_{y\downarrow 0} \,y^{1-2\alpha} \,\partial_yU=h & \text{on} \quad \Gamma_R^0 ,
 \end{cases}
 \end{aligned}
 \end{equation*}
if $U \in H^1(B_R^+,y^{1-2\alpha}dxdy)$ and 
\begin{align*}
\int_{B_R^+}y^{1-2\alpha} &\left( \partial_yU \partial_y\xi+a(x)\nabla_xU \cdot \nabla_x \xi+c(x)U\xi\right)dxdy\\ & =
-\int_{B_R^+} f \xi dxdy+
\frac{1}{c^*_{\alpha}}\int_{\Gamma_R^0} h \xi dx, 
\end{align*}
for all $\xi \in C^1(\overline{B_R^+})$ such that $\xi \equiv 0$ on $\Gamma_R^+$.
\end{definition}

The key point in the proof of Theorem \ref{A3} is the weak unique continuation property (WUCP) for the Neumann problem associated to the extension problem  \eqref{eq:U}.

\begin{proposition} \label{WUCP:ext}
Let $0<\alpha<1$  and let $a$, $c$ satisfy the hypotheses of Theorem \ref{A3}. Assume that for $R>0$, $U \in H^1(B_R^+,y^{1-2\alpha}dxdy)$ solves 
\begin{equation}
 \label{eq:U.2}
 \begin{aligned}
 \begin{cases}
 \partial_y(y^{1-2\alpha}\partial_yU)+\nabla_{x} \cdot (y^{1-2\alpha}a(x)\nabla_{x}U)&\\
\;\;\;\;\;\;\;\;\; \;\;\;\;\;\;\;\;\;\;\;\;\;-y^{1-2\alpha}c(x)U=0, & \text{in}  \quad B_R^+,\\
 - c^*_{\alpha} \lim_{y\downarrow 0} \,y^{1-2\alpha} \,\partial_yU=0 & \text{on} \quad \Gamma_R^0 ,
 \end{cases}
 \end{aligned}
 \end{equation}
 in the weak sense of Definition \ref{def:weak_sol}.
If $U(x,0)=0$ on $\Gamma_R^0$, then $U \equiv 0$ in $B_R^+$. The definition of $U(x,0)$ on $\Gamma_R^0$ is taken as in the definition after Corollary 2.1 in \cite{FKS}
\end{proposition}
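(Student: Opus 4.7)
The plan is to follow the blow-up strategy sketched by R\"uland in \cite{RulandTAMS2016}, adapted to the $C^2$ setting of Theorem \ref{A3}. The argument has three ingredients: even reflection across $\{y=0\}$ to globalize the solution, a Carleman-based doubling inequality, and a rescaling/compactness reduction to the constant coefficient WUCP which is already available. Specifically, since $U$ satisfies the weighted Neumann condition $\lim_{y\downarrow 0} y^{1-2\alpha}\partial_y U = 0$ weakly on $\Gamma_R^0$, I would extend $U$ evenly across $\{y=0\}$ to obtain $\widetilde{U}\in H^1(B_R, |y|^{1-2\alpha}dxdy)$. Because $|y|^{1-2\alpha}$ is an $A_2$-Muckenhoupt weight and the coefficients $a_{jk}(x)$, $c(x)$ are independent of $y$, the reflected function is a weak solution of the degenerate elliptic equation on the full ball $B_R$. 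The hypothesis $U(x,0)=0$ on $\Gamma_R^0$ means $\widetilde U$ vanishes on $\{y=0\}\cap B_R$ in the trace sense of \cite{FKS}.

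Next I would apply R\"uland's Carleman estimate (Proposition 1.4 in \cite{RulandTAMS2016}), which is valid under the $C^2$ assumption on the $a_{jk}$. Combined with the Cacciopoli inequality (Lemma \ref{lemm:Cacciopoli_bis}) and a standard three-ball interpolation argument, this produces a doubling inequality of the form
\begin{equation*}
\int_{B_{2r}} |y|^{1-2\alpha}\,\widetilde U^2\,dxdy \le N \int_{B_r} |y|^{1-2\alpha}\,\widetilde U^2\,dxdy,
\end{equation*}
valid for all sufficiently small $r>0$ with $N$ independent of $r$. Assume for contradiction that $\widetilde U \not\equiv 0$. Setting $\rho_r^2 = r^{-(n+2-2\alpha)}\int_{B_r}|y|^{1-2\alpha}\widetilde U^2$, I would rescale to $V_r(\mathbf{x}) = \widetilde U(r\mathbf{x})/\rho_r$. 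The doubling inequality bounds the ratios $\rho_{2r}/\rho_r$ uniformly, while the Cacciopoli estimate gives uniform local bounds for $V_r$ in the weighted space $H^1(|y|^{1-2\alpha})$. The H\"older regularity theory for degenerate elliptic equations from \cite{FKS} then yields local uniform convergence along a subsequence $V_{r_k}\to V_0$ to a nontrivial limit that still satisfies a doubling inequality globally.

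Since $a_{jk}(rx)\to a_{jk}(0)$ and $c(rx)\to c(0)$ uniformly on compacts, $V_0$ solves the constant coefficient extension equation on $\mathbb R^{n+1}$ with the coefficients frozen at the origin; it vanishes on $\{y=0\}$ by the trace convergence and satisfies the vanishing weighted Neumann condition there. After an affine change of variable diagonalizing $(a_{jk}(0))$ and the reduction of \cite{KPPV} adding one spatial variable to absorb $c(0)\ge 0$, $V_0$ becomes a nontrivial solution of the standard Caffarelli-Silvestre extension problem with both vanishing Dirichlet and weighted Neumann data on a nonempty open portion of $\{y=0\}$. This contradicts R\"uland's WUCP for the constant coefficient problem (Proposition 2.2 in \cite{Ru1}; see also \cite{GhSaUh}), so $\widetilde U\equiv 0$ and hence $U\equiv 0$ in $B_R^+$. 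The main obstacle I foresee is the rigorous derivation of the scale-invariant doubling inequality from R\"uland's Carleman estimate (one must carefully track the weights and use cutoffs compatible with the degeneracy at $\{y=0\}$) and the verification that the blow-up limit $V_0$ genuinely inherits both vanishing boundary conditions in the weak class; the remaining compactness and normal-family arguments are by now fairly standard.
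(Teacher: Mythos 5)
Your proposal follows essentially the same strategy as the paper: derive a doubling inequality from R\"uland's variable-coefficient Carleman estimate together with a Cacciopoli estimate, then perform a blow-up to reduce matters to the constant-coefficient WUCP of R\"uland (Proposition 2.2 in \cite{Ru1}). One minor remark: the even reflection across $\{y=0\}$ is not needed since R\"uland's Carleman estimate is already formulated for the weighted Neumann problem on the half-space, and the paper works in the half-ball $B_R^+$ throughout; your choice is of course also legitimate.

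There is, however, one step you should fix. You invoke the \cite{KPPV} device of adding a spatial dimension to absorb $c(0)\ge 0$ in the blow-up limit, on the apparent assumption that the limit equation retains the potential frozen at the origin. It does not: under the rescaling $U_\sigma(x,y)=U(\sigma x,\sigma y)$, the zero-order coefficient in the rescaled equation is $c_\sigma(x)=\sigma^2 c(\sigma x)$, which tends to zero uniformly on compacts as $\sigma\to 0$ (this is exactly what the paper's \eqref{eq:U.10} records). So the blow-up limit solves the constant-coefficient extension problem with \emph{no} potential, and after the linear change of variables normalizing $a(0)=I$ one may quote R\"uland's Proposition 2.2 directly; the extra-dimension trick is superfluous here. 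The remaining ingredients you list — doubling from Carleman, the Cacciopoli bound with the cutoffs compatible with the $A_2$-weight, the weighted Sobolev embedding and interior H\"older estimate of \cite{FKS} (together with the boundary vanishing and doubling to normalize) for the compactness of $\{U_{\sigma}\}$ — match the paper's argument.
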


\begin{remark} 
\begin{enumerate}
\item
Note that the condition $c \ge 0$ is not needed to prove Proposition \ref{WUCP:ext}.
\item
In the case of constant coefficients $a(x) \equiv a_0$, $c(x) \equiv 0$, Proposition \ref{WUCP:ext} was proved by R\"uland (see Proposition 2.2 in \cite{Ru1}).
\item
The regularity assumption on the coefficients, $\,a_{jk}\in C^2(\mathbb R^n),\,j,k=1,..,n$ in Theorem \ref{A3}, can probably be reduced to being Lipschitz, at the expense of extra technical complications based on the techniques developed in \cite{AKS} and \cite{KoTa2001} (see Remarks 10 in \cite{Ru1} and in \cite{RulandTAMS2016}).
 \end{enumerate}
 \end{remark}

We postpone the proof of Proposition \ref{WUCP:ext} to the next sub-section. 
\medskip

\begin{proof}[Proof of Theorem \ref{A3}]
From Proposition \ref{WUCP:ext}, we deduce that $U(x,y)=0$ in $B_R^+$ for some small $R>0$. 

Now observe that $U$ is a solution of an elliptic equation with $C^2$ coefficients in $\mathbb R^n \times (\epsilon,+\infty)$, for any $\epsilon>0$. Then, we deduce from the Weak Unique Continuation Property for such operators \cite{AronJMPA1957,Cordes1956} that $U \equiv 0$ in $\mathbb R^n \times (0,+\infty)$. Therefore we conclude from \eqref{prop:U.2} that $u \equiv 0$ in $\mathbb R^n$, which finishes the proof of Theorem \ref{A3}.  

\end{proof}

The remaining part of this section is dedicated to the proof of Proposition \ref{WUCP:ext}. The proof of this result relies on the Carleman estimates for the extension problem with variable coefficients proved by R\"uland in \cite{Ru1}, \cite{RulandTAMS2016}.

 
\medskip
\subsection{Carleman estimate.}  We introduce some useful notation.

\begin{itemize}
\item 
Recall that $a:\mathbb R^n \to \mathbb R^{n \times n}$ satisfies the assumptions of Theorem \ref{A3}. Let $a^{-1}=(a^{jk})_{1 \le j, k \le n}:\mathbb R^n \to \mathbb R^{n \times n}$ denote the pointwise inverse of $a(x)$ satisfying 
\begin{equation*}
a^{-1}(x)a(x)=a(x)a^{-1}(x)=1, \quad \forall \, x \in \mathbb R^n . 
\end{equation*} 
Observe that thanks our hypotheses on $a$, $(\mathbb R^n,a^{-1})$ has a Riemannian manifold structure. Then, we look at the upper-half space extension $\mathbb R^{n+1}_+=\mathbb R^n \times (0,+\infty)$ with the metric $1 \times a^{-1}$. 

\item If ${\bf x}=(x,y) \in \mathbb R^{n+1}_+=\mathbb R^n \times (0,+\infty)$, $1_{a^{-1}}(\bf x)$ denotes the geodesic distance of ${\bf x}$ to the origin on the Riemannian manifold $(\mathbb R^{n+1}_+,1 \times a^{-1})$. 

\item For $R>0$, we denote by $\mathcal{B}_R^+$ the geodesic half-ball centered at $0$ and of radius $R$ and by $\mathcal{B}_R^0$ its intersection with $\mathbb R^n$, \textit{i.e.} 
\begin{equation} \label{geo:ball}
\mathcal{B}_R^+=\left\{ {\bf x} \in \mathbb R^{n+1}_+ : 1_{a^{-1}}({\bf x}) <R \right\} 
\end{equation}
and 
\begin{equation} \label{int:geo:ball}
\mathcal{B}_R^0=\left\{ {\bf x} \in \mathbb R^{n+1}_+ : 1_{a^{-1}}({\bf x}) <R \right\}\cap\{y=0\} . 
\end{equation}

\item For $0<\delta<R$, we denote by $\mathcal{A}^+_{\delta,R}$ the geodesic half-annulus centered at $0$ and of radii $\delta$ and $R$, \textit{i.e.}
\begin{equation}
\mathcal{A}^+_{\delta,R}=\left\{ {\bf x} \in \mathbb R^{n+1}_+ : \delta<1_{a^{-1}}({\bf x}) <R \right\} .
\end{equation}

\item For a positive constant $C$, we will denote $C=C(a)$ to emphasize that the constant may depend on the ellipticity of $a$, its $L^{\infty}$-bound, and its $C^2$-smoothness 
($\sup_{\mathcal{B}^+_R} \left\{|a_{ij}|, |\nabla a_{ij}|, |\nabla^2 a_{ij}| \right\}$, if we work in the geodesic half-ball $\mathcal{B}^+_R$).
\end{itemize}

In this setting, we state a version of the variable coefficients Carleman estimate of R\"uland (see Proposition 7.1 in \cite{Ru1}).

\begin{proposition} [Variable coefficients Carleman estimate \cite{Ru1}] \label{prop:Carleman}Let $\alpha \in (0,1)$. Assume that the coefficients $a=(a_{jk})$ satisfy hypotheses of Theorem \ref{A3}. For $r=1_{a^{-1}}(\bf x)$, set
\begin{equation} \label{def:phi}
\phi(r)=-\ln(r)+\frac1{10} \left( \ln(r) \arctan(\ln(r)) -\frac12 \ln \left(1+\ln^2(r) \right)\right) .
\end{equation}
There exists $0<R_0=R_0(a) < 1$ such that for any $0<R <R_0$, the following is true.  Let $1<\kappa<\kappa_0$ and $\delta>0$ be such that $\kappa_0\delta<R$. Assume that $U \in H^1(\mathbb R^{n+1}_+,y^{1-2\alpha}d\bf x)$ with $\text{supp} \, U \subset \mathcal{A}^+_{\delta,R}$ satisfies 
\begin{equation}
 \label{eq:U.3}
 \begin{aligned}
 \begin{cases}
 \begin{aligned}
 \partial_y(y^{1-2\alpha}\partial_yU)+\nabla_{x} \cdot (y^{1-2\alpha}a(x)\nabla_{x}U)\\
 -c(x)y^{1-2\alpha}U=f, & \;\;\;\text{in}  \quad \mathcal{B}_R^+,\\
 \lim_{y\downarrow 0} \,y^{1-2\alpha} \,\partial_yU=0 & \;\;\;\text{on} \quad  \mathcal{B}_R^0.
 \end{aligned}
 \end{cases}
 \end{aligned}
 \end{equation}
 Then, there exists $\tau_0=\tau_0(a;\|c\|_{\infty})\ge 1$ such that for $\tau \ge \tau_0$, 
 \begin{equation} \label{Carleman.1} 
 \begin{split}
& \left\| e^{\tau \phi} r y^{\frac{2\alpha-1}2}f\right\|_{L^2(\mathcal{B}_R^+)} \\
&\gtrsim
 \tau^{\frac32} \left\| e^{\tau \phi} (1+\ln^2(r))^{-\frac12}r^{-1}y^{\frac{1-2\alpha}2}U\right\|_{L^2(\mathcal{B}_R^+)} \\ & \quad +  \tau^{\frac12}\left\| e^{\tau \phi} (1+\ln^2(r))^{-\frac12}y^{\frac{1-2\alpha}2}\nabla _{\bf x}U\right\|_{L^2(\mathcal{B}_R^+)}  \\ & \quad + \tau \delta^{-1} \left\| e^{\tau \phi} y^{\frac{1-2\alpha}2}U\right\|_{L^2(\mathcal{A}_{\delta,\kappa\delta}^+)} .
 \end{split}
 \end{equation}
\end{proposition}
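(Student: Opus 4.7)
The plan is to follow the strategy of Rüland in Proposition 7.1 of \cite{Ru1}, adapted to the $C^2$ coefficients assumed here. The proof proceeds by a conjugation-plus-commutator argument tailored to the degenerate operator and to the carefully chosen weight $\phi$.

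First, I would pass to geodesic polar coordinates on the Riemannian manifold $(\mathbb R^{n+1}_+, 1\times a^{-1})$ centered at the origin, writing $r=1_{a^{-1}}(\mathbf{x})$ for the radial variable and $\theta$ for the angular variable on the upper half-sphere. In these coordinates, the principal part of the operator in \eqref{eq:U.3} splits into a radial part in $r$, an angular part involving the Laplace--Beltrami operator on the half-sphere for the metric frozen at the origin, the degenerate $y^{1-2\alpha}$ weight, and an error of size $O(r)$ produced by the $C^2$-Taylor expansion of $a$ around the origin. The smallness of $R_0=R_0(a)$ will be used to absorb this coefficient error and the lower-order $c$-term into the main positive contributions.

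Next, I would perform the logarithmic change of radial variable $t=-\ln r$, which turns the radial principal part into a constant-coefficient second-order operator in $t$, and conjugate by the weight: set $v=e^{\tau\phi}y^{(1-2\alpha)/2}U$. The symmetrization factor $y^{(1-2\alpha)/2}$ converts the degenerate measure $y^{1-2\alpha}d\mathbf{x}$ into ordinary Lebesgue measure on $\mathbb R^{n+1}_+$, and the weighted Neumann condition $\lim_{y\downarrow 0}y^{1-2\alpha}\partial_yU=0$ is precisely what kills the boundary contributions at $\{y=0\}$ in the subsequent integrations by parts. Decomposing the conjugated operator as $L_\tau=S_\tau+A_\tau$ into its symmetric and antisymmetric parts in $L^2(d\mathbf{x})$, the identity
\begin{equation*}
\|L_\tau v\|_{L^2}^2 = \|S_\tau v\|_{L^2}^2+\|A_\tau v\|_{L^2}^2+\langle[S_\tau,A_\tau]v,v\rangle
\end{equation*}
reduces \eqref{Carleman.1} to producing positivity of the commutator with the three prescribed weights. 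The correction $\tfrac{1}{10}(\ln r\arctan(\ln r)-\tfrac12\ln(1+\ln^2 r))$ in the definition of $\phi$ is engineered so that, in the $t$-variable, $\phi''$ is bounded below by a positive multiple of $(1+t^2)^{-1}=(1+\ln^2 r)^{-1}$ while $\phi'$ remains essentially linear at infinity; this is what generates the factor $(1+\ln^2 r)^{-1/2}$ and the strong $\tau^{3/2}$ gain on the right-hand side of \eqref{Carleman.1}, gains that the bare Hardy weight $-\ln r$ cannot deliver.

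The main obstacle is the commutator computation itself, which must simultaneously yield all three positive contributions on the right-hand side of \eqref{Carleman.1}: the bulk $\tau^{3/2}r^{-1}(1+\ln^2 r)^{-1/2}$ term coming from the pseudoconvex second derivative of $\phi$ in $t$, the $\tau^{1/2}(1+\ln^2 r)^{-1/2}$ gradient term coming from the angular-radial cross terms in $[S_\tau,A_\tau]$, and the localized $\tau\delta^{-1}$ term on the annulus $\mathcal A^+_{\delta,\kappa\delta}$, extracted from a weighted Hardy-type inequality near the inner boundary of $\mathrm{supp}\,U$. All three of these contributions must then survive the absorption of three error sources: the $O(r)$ coefficient remainder, handled by taking $R_0(a)$ small enough that this term is dominated by the $r^{-2}$-scaled angular main term; the potential term $c(x)y^{1-2\alpha}U$, handled by choosing $\tau_0=\tau_0(a;\|c\|_\infty)$ large enough that $\|c\|_\infty$ is beaten by a positive power of $\tau$; and the would-be boundary terms at $\{y=0\}$, eliminated by the symmetrization factor $y^{(1-2\alpha)/2}$ together with the weighted Neumann condition. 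Once positivity is secured, unwinding the conjugation and expressing everything in the original variables recovers \eqref{Carleman.1} for $U$.
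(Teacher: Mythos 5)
Your overall architecture matches the paper's: pass to geodesic polar coordinates for $(\mathbb R^{n+1}_+, 1\times a^{-1})$, make the conformal radial change $t=\ln r$ (your sign convention $t=-\ln r$ is immaterial), conjugate by the Carleman weight and by a power of $\theta_n$ to absorb the degenerate measure, decompose the conjugated operator into a ``symmetric'' piece $\mathcal{S}$ and an ``antisymmetric'' piece $\mathcal{A}$, and extract positivity while absorbing the $O(r)$ coefficient remainder by shrinking $R_0$ and the $c$-term by enlarging $\tau_0$. This is exactly the route the paper takes, following R\"uland.

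There is, however, a genuine gap in how you claim to produce the three positive terms on the right-hand side of \eqref{Carleman.1}. You assert that all three come out of the commutator $\langle[\mathcal{S},\mathcal{A}]V,V\rangle$ (equivalently the cross term $2(\mathcal{S}V,\mathcal{A}V)$). That is not the case. In the conjugated variables $\mathcal{S}=\partial_t^2+\tau^2(\Phi')^2-\mu^2+\Lambda_{\mathbb S^n_+}$ and $\mathcal{A}=-2\tau\Phi'\partial_t-\tau\Phi''$, so the only piece of $[\mathcal{S},\mathcal{A}]$ involving the spherical operator is $[\Lambda_{\mathbb S^n_+},-2\tau\Phi'\partial_t]$; in the frozen-coefficient model this vanishes, and in general it is only a small error term (it is precisely $E_4$ in the paper, controlled in Step 6). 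The cross term therefore only yields the $\tau^{3/2}$ bulk term and the radial-derivative $\tau^{1/2}\|(\Phi'')^{1/2}\partial_t V\|$ term (Step 4). The angular-gradient piece $\tau^{1/2}\|(\Phi'')^{1/2}\theta_n^{(1-2\alpha)/2}|\widetilde\nabla_{\mathbb S^n_+}(\theta_n^{(2\alpha-1)/2}V)|_\beta\|$ comes from a separate step that tests $\mathcal{S}V$ against $-\tau\Phi''V$, integrates by parts, and controls the resulting error with $\|\mathcal{S}V\|_{L^2_\beta}^2$ and the already-obtained bulk term via Cauchy--Schwarz (Step 5). Likewise, the annulus term $\tau\delta^{-1}\|\cdot\|_{L^2(\mathcal{A}^+_{\delta,\kappa\delta})}$ does not come from a Hardy inequality folded into the commutator; it is extracted from the leftover $\|\mathcal{A}V\|_{L^2_\beta}^2$, combined with a fundamental-theorem-of-calculus estimate at the inner edge of $\mathrm{supp}\,U$ and absorption of the $\tau\|\Phi''V\|$ piece into the bulk term (Step 7). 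If you only compute the commutator, you simply do not see those two terms, and the proof will not close.

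A secondary point worth flagging: you wrote $\|L_\tau v\|^2=\|S_\tau v\|^2+\|A_\tau v\|^2+\langle[S_\tau,A_\tau]v,v\rangle$ as if $S_\tau$ and $A_\tau$ were exactly symmetric and antisymmetric in the ambient $L^2$. Because of the $r$-dependence of the induced spherical metric $\beta$, they are not, and the paper works instead with the raw expansion $\|F\|^2=\|\mathcal{S}V\|^2+\|\mathcal{A}V\|^2+2(\mathcal{S}V,\mathcal{A}V)$ and integrates $2(\mathcal{S}V,\mathcal{A}V)$ by parts, producing error terms $E_1,\dots,E_5$ that must be dominated via \eqref{bound_conformal_metric.1}--\eqref{bound_conformal_metric.2} by taking $R_0$ small. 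Your plan acknowledges coefficient errors in freezing $a$, but you also need this specific control of the metric's $t$-derivatives to justify the formal commutator calculus.
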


\begin{remark}  
In \cite{Ru1,RulandTAMS2016}, it is shown that estimate \eqref{Carleman.1} holds in the more general case where 
\begin{equation*}
\lim_{y\downarrow 0} \,y^{1-2\alpha} \,\partial_yU=VU, \quad \text{on} \  \mathcal{B}_R^0,
\end{equation*} and $V$ is a potential satisfying homogeneous critical and subcritical assumptions under the restriction $\alpha \in [\frac14,1)$ (see \cite{Ru1}), or $V \in C^1(M)$ and $M$ is a compact Riemmanian manifold with a smooth metric in the full range $0<\alpha<1$ (see \cite{RulandTAMS2016}).
\end{remark}

The estimates for the first two terms on the right-hand side of \eqref{Carleman.1} of Proposition \ref{prop:Carleman} are proved by combining the proof of Proposition 7.1 in \cite{Ru1} with the arguments in pages 95-97 of \cite{Ru1} or following the proof of Proposition 1.4 in \cite{RulandTAMS2016}. The estimate for the third term on the right-hand side of \eqref{Carleman.1} follows by arguing as Remark 4 in \cite{Ru1}. For the convenience of the reader, we provide the details of the proof of Proposition \ref{prop:Carleman} below.

\begin{proof}[Proof of Proposition \ref{prop:Carleman}]
 For ${\bf x}=(x,y) \in \mathbb R^{n+1}_+$, we will call $x \in \mathbb R^n$ the tangential variable and $y>0$ the normal variable.
 
 \medskip
 \noindent \emph{Step $1$: Reformulation of the estimate in geodesic polar coordinates}.
Observe that the tangential part of the second order elliptic operator with non-constant coefficients $\mathcal{L}=-\nabla\cdot  \left( a(x) \nabla \right)$ corresponds up to first order operators to the Laplace-Beltrami operator $\Delta_{a^{-1}}$ associated to the metric $a^{-1}=(a^{jk})_{1\le  j, k \le n}$. More precisely, we have 
\begin{equation} \label{Laplace_Beltrami}
\nabla\cdot  \left( a(x) \nabla \right)=\Delta_{a^{-1}}-\frac12 v_{a^{-1}} \cdot \nabla_{a^{-1}}
\end{equation} 
where $v_{a^{-1}}$ is the vector whose $j-th$ component is given by $v_{a^{-1},j}=\text{tr}(a^{-1}\partial_j a)$. 
Then, $U$ is a weak solution of the problem 
\begin{equation}
 \label{eq:U.bis3}
 \begin{aligned}
 \begin{cases}
 \partial_y(y^{1-2\alpha}\partial_yU)+ y^{1-2\alpha}\Delta_{a^{-1}}U=\widetilde{f}, & \text{in}  \quad \mathcal{B}_R^+,\\
 \lim_{y\downarrow 0} \,y^{1-2\alpha} \,\partial_yU=0 & \text{on} \quad  \mathcal{B}_R^0  ,
 \end{cases}
 \end{aligned}
 \end{equation}
with 
\begin{equation*}
\widetilde{f}=f+y^{1-2\alpha}c(x)U+\frac12 y^{1-2\alpha} v_{a^{-1}} \cdot \nabla_{a^{-1}}U .
\end{equation*}

As explained above, we consider $\mathbb R^{n+1}_+$ as a Riemannian manifold with the product metric $1\times a^{-1}$ and we work with the geodesic polar coordinates 
\begin{equation*}
(r,\Theta(\theta_1,\cdots,\theta_n)) \in (0,R)\times \mathbb S^n_+
\end{equation*} 
defined through the exponential map\footnote{The $C^2$ assumption on the coefficients $a_{jk}$ is needed to obtain the uiqueness of the geodesics through the Cauchy-Lipschitz theorem, so that the exponential map is well defined.} by assuming that $R$ is chosen small enough. Here, by convention $\theta_{n}=\frac{y}{|{\bf x}|}$ and $\mathbb S^{n}_+=\{ \Theta(\theta_1,\cdots,\theta_n) \in \mathbb S^n : \theta_n>0 \}.$  It follows from Gauss' lemma (see for example \cite{Jost2011}) that the metric in the geodesic polar coordinates $g_{polar}$ and its inverse $g_{polar}^{-1}$ write 
\begin{equation} \label{metric:geo_polar}
g_{polar}= \begin{bmatrix} 1 & \begin{matrix} 0 & \cdots & 0 \end{matrix}  \\ 
\begin{matrix} 0 \\ \vdots \\[1ex] 0 \end{matrix} & r^2\beta
\end{bmatrix}
\quad \text{and} \quad 
(g_{polar})^{-1}= \begin{bmatrix} 1 & \begin{matrix} 0 & \cdots & 0 \end{matrix}  \\ 
\begin{matrix} 0 \\ \vdots \\[1ex] 0 \end{matrix} & r^{-2}\beta^{-1}
\end{bmatrix}
\end{equation}
where the Riemannian metric with respect to the angular variables $(\theta_1,\cdots,\theta_{d-1})$ on the geodesic half-sphere $S^+_r$ of radius $r$ is written as $r^2 \beta$ in analogy with the euclidean case. Note however that contrary to the euclidean case, the metric $\beta$ may also depend on $r$. The volume element on the geodesic half-sphere $S^+_r$ is given by 
\begin{equation} \label{volume element}
vol_{S_r^+}=r^n\sqrt{|\beta|} \quad \text{where} \quad |\beta|=|\det (\beta_{jk})| .
\end{equation}
Moreover, by using the theory of Jacobi fields (see \cite{DoFef1988} page 169-170)\footnote{The constant $C(a)$ depends on the sectional curvatures which are bounded by the $C^2$-norm of $a$ and on the ellipticity constant $\lambda$ in \eqref{hyp2}.}, there exists $r_0>0$, $c=c(a)$ and $C=C(a)>0$ such that, for all $0<r<r_0$ and $\Theta \in \mathbb S^{n}_+$,
\begin{align} 
&\max_{1 \le i,j \le n}\left| \partial_r\beta_{ij}(r\Theta) \right|+\left| \partial_r (|\beta(r\Theta)|) \right| \le C r , \label{bound_spherical_metric} \\ 
& c \le |\beta(r\Theta)| \le C . \label{bound_spherical_metric.2}
\end{align}

Then, by using \eqref{metric:geo_polar}, the problem \eqref{eq:U.bis3} rewrites in geodesic polar coordinates 
\begin{equation} \label{eq:U.bis3a}
 \begin{aligned}
 \begin{cases}
r^{-n}\theta_n^{1-2\alpha}\partial_r \left( r^{n+1-2\alpha}\partial_rU \right)
+r^{-1-2\alpha}\widetilde{\nabla}_{\mathbb S^n_+}\cdot \theta_n^{1-2\alpha}\widetilde{\nabla}_{\mathbb S^n_+}U&\\
\;\;\;\;\;\;\;\;\;\;\;\;\;\;\;\;\;\;\;\;\;\;\;\;\;\;\;\;\;\;\;\;\;\;\;\;\;\;=f^{\star}, & \text{in} \,  \mathcal{B}_R^+,\\
 \lim_{\theta_n\downarrow 0} \theta_n^{1-2\alpha} \partial_{\theta_n}U=0,  &\text{on}\,  \mathcal{B}_R^0  ,
 \end{cases}
 \end{aligned}
 \end{equation}
 where 
 \begin{equation} \label{def:f_star}
f^{\star}=\widetilde{f}-r^{1-2\alpha}\theta_n^{1-2\alpha} \partial_r\left( \ln \sqrt{|\beta|} \right) \partial_rU.
\end{equation}
and $\widetilde{\nabla}_{\mathbb S^n_+}$ denotes the gradient with respect to the metric $\beta$ on $\mathbb S^n_+$.  More specifically, we have, by denoting $\beta^{-1}=(\beta^{jk})$,
\begin{equation*}
\widetilde{\nabla}_{\mathbb S^n_+} \cdot \theta_n^{1-2\alpha} \widetilde{\nabla}_{\mathbb S^n_+} U=\frac1{\sqrt{|\beta|}} \partial_{\theta_i} \left( \theta_n^{1-2\alpha}\sqrt{|\beta|} \beta^{jk} \partial_{\theta_k} U \right) .
\end{equation*}

\medskip
We claim that it is enough to prove that if $U$ is a weak solution of \eqref{eq:U.bis3a}, then, for $\tau$ large enough,
 \begin{equation} \label{Carleman.1tilde} 
 \begin{split}
  \left\| f^{\star}\right\|_{L^2(\mathcal{B}_R^+)} &\gtrsim\tau^{\frac32} \left\| e^{\tau \phi} (1+\ln^2(r))^{-\frac12}r^{-1}y^{\frac{1-2\alpha}2}U\right\|_{L^2(\mathcal{B}_R^+)} \\ & \quad +  \tau^{\frac12}\left\| e^{\tau \phi} (1+\ln^2(r))^{-\frac12}y^{\frac{1-2\alpha}2}\nabla _{\bf x}U\right\|_{L^2(\mathcal{B}_R^+)}  \\ & \quad + \tau \delta^{-1} \left\| e^{\tau \phi} y^{\frac{1-2\alpha}2}U\right\|_{L^2(\mathcal{A}_{\delta,\kappa\delta}^+)}.
  \end{split}
 \end{equation}

Indeed, assume that this is the case. Notice from \eqref{bound_spherical_metric}-\eqref{bound_spherical_metric.2} that 
\begin{align*}
&\left\| e^{\tau \phi} r y^{\frac{2\alpha-1}2}y^{1-2\alpha}c(x)U\right\|_{L^2(\mathcal{B}_R^+)} \\
&+\lesssim \|c\|_{L^{\infty}}  \left\| e^{\tau \phi} (1+\ln^2(r))^{-\frac12}r^{-1}y^{\frac{1-2\alpha}2}U\right\|_{L^2(\mathcal{B}_R^+)} \\ 
& \le  \frac12 \tau^{\frac32}\left\| e^{\tau \phi} (1+\ln^2(r))^{-\frac12}r^{-1}y^{\frac{1-2\alpha}2}U\right\|_{L^2(\mathcal{B}_R^+)} , 
\end{align*}
and
\begin{align*}
&\left\| e^{\tau \phi} r y^{\frac{2\alpha-1}2}y^{1-2\alpha}v_{a^{-1}} \cdot \nabla_{a^{-1}}U\right\|_{L^2(\mathcal{B}_R^+)}\\
& + \left\| e^{\tau \phi} r y^{\frac{2\alpha-1}2}r^{1-2\alpha}\theta_n^{1-2\alpha} \partial_r\left( \ln \sqrt{|\beta|} \right) \partial_rU\right\|_{L^2(\mathcal{B}_R^+)} \\ &\le C(a)  \left\| e^{\tau \phi} (1+\ln^2(r))^{-\frac12}y^{\frac{1-2\alpha}2}\nabla_{{\bf x}} U\right\|_{L^2(\mathcal{B}_R^+)} \\ & \le \frac12 \tau^{\frac12}  \left\| e^{\tau \phi} (1+\ln^2(r))^{-\frac12}y^{\frac{1-2\alpha}2}\nabla_{{\bf x}} U\right\|_{L^2(\mathcal{B}_R^+)}
\end{align*}
by choosing $0<R<R_0$ small enough and $\tau$ large enough depending possibly on $\|c\|_{L^{\infty}}$ and on the $C^2$-smoothness of $a$. Thus, the extra contributions coming from $y^{1-2\alpha}c(x)U$, $ y^{1-2\alpha} v_{a^{-1}} \cdot \nabla_{a^{-1}}U$ and $r^{1-2\alpha}\theta_n^{1-2\alpha} \partial_r\left( \ln \sqrt{|\beta|} \right) \partial_rU$ can be absorbed by the right-hand side of \eqref{Carleman.1tilde}, which completes the proof of \eqref{Carleman.1}, assuming \eqref{Carleman.1tilde}.
 
 \medskip
 \noindent \emph{Step $2$: Reformulation of the estimate in geodesic conformal coordinates}.
In conformal coordinates $r=e^t$, so that $\partial_rU=e^{-t}\partial_tU$. In particular, the bounds \eqref{bound_spherical_metric}-\eqref{bound_spherical_metric.2} become
\begin{align} 
&\max_{1 \le i,j \le n}\left| \partial_t\left(\beta_{ij}\right)(r\Theta) \right|+\left| \partial_t (|\beta(r\Theta)|) \right| \le Ce^{2t} , \label{bound_conformal_metric.1} \\ 
& c \le  |\beta(r\Theta)| \le C , \label{bound_conformal_metric.2}
\end{align}
for some constants $c=c(a)$ and $C=C(a)$.

After multiplying the equation by $e^{(1+2\alpha)t}$ and conjugating by $$e^{\frac{n-2\alpha}2t} e^{\tau \phi(e^t)}\theta_n^{\frac{1-2\alpha}2},$$ the elliptic operator in \eqref{eq:U.bis3}-\eqref{eq:U.bis3a} writes 
\begin{equation} \label{def:L_phi}
L_{\phi}=e^{\tau \phi} \left(\left( \partial_t^2- \mu^2\right)+\theta_n^{\frac{2\alpha-1}2} \widetilde{\nabla}_{\mathbb S^n_+}  \cdot \theta_n^{1-2\alpha} \widetilde{\nabla}_{\mathbb S^n_+} \theta_n^{\frac{2\alpha-1}2}\right) e^{-\tau \phi}, 
\end{equation} 
with $\mu^2=\frac{(n-2\alpha)^2}4$. More precisely, by defining the new unknowns 
\begin{align*} 
V(t,\theta_1,\cdots,\theta_n)&=e^{\tau \phi} e^{\frac{n-2\alpha}2t} \theta_n^{\frac{1-2\alpha}2} U(e^t\Theta(\theta_1,\cdots,\theta_n)) \\ 
F(t,\theta_1,\cdots,\theta_n)&=e^{\tau \phi}e^{\frac{2\alpha+2+n}2t} \theta_n^{\frac{2\alpha-1}2} f^{\star}(e^t \Theta(\theta_1,\cdots,\theta_n)),
\end{align*}
the problem \eqref{eq:U.bis3} rewrites, after calculations as,
\begin{equation}
 \label{eq:U.4}
 \begin{aligned}
 \begin{cases}
L_{\phi}V= F, & \text{in}  \  \mathcal{B}_R^+,\\
 \lim_{\theta_n\downarrow 0} \,\theta_n^{1-2\alpha} \partial_{\theta_n} \left( \theta_n^{\frac{2\alpha-1}2}V\right)=0,  & \text{on} \  \mathcal{B}_R^0,
 \end{cases}
 \end{aligned}
 \end{equation}
where $L_{\phi}$ is defined in  \eqref{def:L_phi}. Then, the Carleman estimate \eqref{Carleman.1tilde} follows from
\begin{equation} \label{Carleman.3} 
 \begin{aligned}
 \left\| F\right\|_{L^2_{\beta}((-\infty,\ln R) \times \mathbb S^n_+)} &\gtrsim
 \tau^{\frac32} \left\| (\Phi'')^{\frac12}V\right\|_{L^2_{\beta}((-\infty,\ln R) \times \mathbb S^n_+)}\\ 
 & +\tau^{\frac12} \left\| (\Phi'')^{\frac12}\partial_tV\right\|_{L^2_{\beta}((-\infty,\ln R) \times \mathbb S^n_+)} \\ & +  \tau^{\frac12}\left\| (\Phi'')^{\frac12} \theta_n^{\frac{1-2\alpha}2}\left|\widetilde{\nabla}_{\mathbb S^n_+} \left(\theta_n^{\frac{2\alpha-1}2} V\right)\right|_{\beta}\right\|_{L^2_{\beta}((-\infty,\ln R) \times \mathbb S^n_+)} \\ 
 & +\tau \delta^{-1} \left\| e^{t} V\right\|_{L^2_{\beta}((\ln \delta,\ln (\kappa \delta))\times \mathbb S^+_r)}
 \end{aligned}
 \end{equation}
 where $\Phi(t)=\phi(e^t)$ satisfies 
 \begin{align}
 \Phi(t)&=-t+\frac1{10}\left(t \arctan  (t) -\frac12 \ln \left(1+t^2 \right)\right) , \\ 
 \Phi'(t)&=-1+\frac1{10}\arctan (t)<0 , \\
 \Phi''(t)&=\frac1{10}\frac1{1+t^2} >0.
 \end{align}
 In particular, there exists $C>0$ such that, for all $t \in \mathbb R$,
 \begin{align} 
 -2 \le  \Phi'(t)  & \le -\frac45  , \label{prop:Phi'} \\
 0< \Phi''(t) & \le \frac1{10} ,  \label{prop:Phi''} \\ 
 \left| \Phi'''(t) \right| +  \left| \Phi^{(4)}(t) \right| &\le C \Phi''(t) .  \label{prop:Phi3-4}
 \end{align}
 
 Moreover, the $L^2_{\beta}$-norm in the spherical variable $\Theta(\theta_1,\cdots,\theta_n) \in \mathbb S^n_+$ is to be taken with respect to the volume form $\sqrt{|\beta|} \, d\theta_1 \cdots d\theta_n$. More precisely, let $N_+=[0,2\pi) \times [-\frac{\pi}2,\frac{\pi}2) \times \cdots \times [0,\frac{\pi}2)$.
  Then, we  denote 
  $\|V\|_{L^2_\beta}=\sqrt{\left(V,V \right)_{L^2_{\beta}}}$, where $\left( \cdot , \cdot \right)_{L^2_{\beta}}$  is defined by
  \begin{align*}
  \left( V , W \right)_{L^2_{\beta}} = \int_{\mathbb R \times N_+} V(t,\theta_1,\cdots,\theta_n)W(t,\theta_1,\cdots,\theta_n) \sqrt{|\beta|} \, dt d\theta_1 \cdots d\theta_n
  \end{align*}
  To simplify the notation and when it is clear from the context, we will often denote $\|\cdot\|_{L^2_{\beta}}=\|\cdot\|_{L^2_{\beta}((-\infty,\ln R) \times \mathbb S^n_+))}$ and $\left( \cdot , \cdot \right)_{L^2_{\beta}}=\left( \cdot , \cdot \right)_{L^2_{\beta}((-\infty,\ln R) \times \mathbb S^n_+))}$ .
 
 \medskip
 \noindent \emph{Step $3$: Decomposition of $L_{\phi}$ into symmetric and antisymmetric operators}.
To prove estimate \eqref{Carleman.3}, we split the operator $L_{\phi}$ in \eqref{def:L_phi} into its "symmetric" part $\mathcal{S}$ and its "anti-symmetric" part $\mathcal{A}$:  
\begin{equation} \label{decomposition:Lphi}
L_{\phi}= \mathcal{S}+\mathcal{A},
\end{equation}
where
 \begin{align}
 \mathcal{S}&=\partial_t^2+\tau^2(\Phi')^2-\mu^2+\Lambda_{\mathbb S^n_+}, \label{def:S}\\ 
 \Lambda_{\mathbb S^n_+} &=\theta_n^{\frac{2\alpha-1}2} \widetilde{\nabla}_{\mathbb S^n_+} \cdot \theta_n^{1-2\alpha} \widetilde{\nabla}_{\mathbb S^n_+} \theta_n^{\frac{2\alpha-1}2}, \label{def:Lambda}\\
\mathcal{A} &= -2\tau\Phi' \partial_t-\tau\Phi'' . \label{def:A}
 \end{align}
Observe that the operators $\mathcal{S}$ and $\mathcal{A}$ are not actually symmetric and antisymmetric, due to the non-trivial geometry of the problem. 
Note however that the spherical operator $ \Lambda_{\mathbb S^n_+} $ is symmetric  for functions $V$ and $W$ satisfying the Neumann condition\footnote{For functions that do not satisfy the Neumann condition in \eqref{eq:U.4}, the operator $ \Lambda_{\mathbb S^n_+} $ is not symmetric due to the boundary term appearing after the integration by parts. A trace estimate has to be used to deal with these boundary terms. We refer to Lemma 3.1 in \cite{Ru1} for more details.} in \eqref{eq:U.4}. More precisely, it follows from the divergence theorem that 
\begin{equation*}
\left( \Lambda_{\mathbb S^n_+} V, W \right)_{L^2_{\beta}} = \left(  V, \Lambda_{\mathbb S^n_+}W \right)_{L^2_{\beta}} .
\end{equation*}
 
We have by using the decomposition \eqref{decomposition:Lphi} in the first equation of \eqref{eq:U.4} that 
\begin{equation} \label{Carleman:proof.1}
\left\| F \right\|_{L^2_{\beta}}^2=\left\| \mathcal{S}V \right\|_{L^2_{\beta}}^2+\left\| \mathcal{A}V \right\|_{L^2_{\beta}}^2
+2\left( \mathcal{S}V, \mathcal{A}V \right)_{L^2_{\beta}}
\end{equation}
 
  \medskip
 \noindent \emph{Step $4$: Control of the $L^2_{\beta}$-norm of $V$ and $\partial_tV$.}
 We use the contribution of the cross terms. First, we decompose the cross terms as
 \begin{align*}
2& \left( \mathcal{S}V, \mathcal{A}V \right)_{L^2_{\beta}} \\ & =  2 \tau^2\left( (\Phi')^2V, \mathcal{A}V \right)_{L^2_{\beta}} +2 \left( \partial_t^2V, \mathcal{A}V \right)_{L^2_{\beta}} -2\mu^2\left( V, \mathcal{A}V \right)_{L^2_{\beta}}+2\left( \Lambda_{\mathbb S^n_+} V, \mathcal{A}V \right)_{L^2_{\beta}} \\ 
&=:K_1+K_2+K_3+K_4 .
 \end{align*}
 We follow closely the computations on pages 2321-2322 of \cite{RulandTAMS2016} to compute each component $K_i$ by integrating by parts. We obtain that
 \begin{align*}
 K_1 &=4\tau^3 \int_{(-\infty,\ln R) \times N_+} V^2 \left( \Phi'\right)^2 \Phi'' \sqrt{|\beta|}+E_1, \\
 K_2&=4\tau \int_{(-\infty,\ln R) \times N_+} (\partial_tV)^2 \Phi''\sqrt{|\beta|}+E_2 ,\\
 K_3 & = E_3 ,\\ 
 K_4 &= E_4 ,
 \end{align*}
 where 
  \begin{align*}
 E_1 &=2\tau^3 \int_{(-\infty,\ln R) \times N_+} V^2 \left( \Phi'\right)^3 \partial_t\left( \sqrt{|\beta|}\right), \\
 E_2&=2\tau \int_{(-\infty,\ln R) \times N_+} (\partial_tV)^2 \Phi' \partial_t \left(\sqrt{|\beta|}\right)\\&+
 2\tau \int_{(-\infty,\ln R) \times N_+} (\partial_tV) V \Phi'' \partial_t \left(\sqrt{|\beta|}\right) \\
 & \quad -\tau \int_{(-\infty,\ln R) \times N_+} V^2 \Phi''' \partial_t \left(\sqrt{|\beta|}\right)-\tau \int_{(-\infty,\ln R) \times N_+} V^2 \Phi^{(4)} \sqrt{|\beta|}  \\
 &=:E_{2,1}+E_{2,2}+E_{2,3}+E_{2,4} , \\
 E_3 & = -2\mu^2\tau \int_{(-\infty,\ln R) \times N_+} V^2 \Phi' \partial_t\left( \sqrt{|\beta|}\right),\\ 
 E_4 &= -2\tau  \int_{(-\infty,\ln R) \times N_+} \theta_n^{1-2\alpha}\Phi' \partial_{\theta_i} \left( \theta_n^{1-2\alpha}V \right)\partial_{\theta_j} \left( \theta_n^{1-2\alpha}V \right) \partial_t \left( \sqrt{|\beta|} \beta^{ij} \right) .
 \end{align*}
 Note that the computation for $K_4$ is easier than the one on page 2322 of \cite{RulandTAMS2016}, since due to the Neumann condition in \eqref{eq:U.4} no boundary term appear. 
 
 Therefore, it follows combining  \eqref{Carleman:proof.1} with the above computations that
 \begin{align}
\left\| F \right\|_{L^2_{\beta}}^2 &\ge \left\| \mathcal{S}V \right\|_{L^2_{\beta}}^2+\left\| \mathcal{A}V \right\|_{L^2_{\beta}}^2
+4\tau^{3} \left\| \Phi'(\Phi'')^{\frac12}V\right\|_{L^2_{\beta}}^2 \nonumber \\ &\quad +4\tau \left\| (\Phi'')^{\frac12}\partial_tV\right\|_{L^2_{\beta}}^2  -\sum_{j=1}^4 \left| E_j\right| . \label{Carleman:proof.4}
\end{align} 

\medskip
 \noindent \emph{Step $5$: Control of the $L^2_{\beta}$-norm of  $\theta_n^{\frac{1-2\alpha}2}\left|\widetilde{\nabla}_{\mathbb S^n_+} \left(\theta_n^{\frac{2\alpha-1}2} V\right)\right|_{\beta}$.} We use the contribution of $\left\| \mathcal{S}V \right\|_{L^2_{\beta}}^2$. From the definition of $\mathcal{S}$ in \eqref{def:S}, we have
  \begin{align*}
-\tau &\left( \mathcal{S}V, \Phi''V \right)_{L^2_{\beta}} \\ & = - \tau\left( \partial_t^2V,  \Phi''V \right)_{L^2_{\beta}} -\tau^3 \left( \left(\Phi'\right)^2V, \Phi''V \right)_{L^2_{\beta}} \\
&+\mu^2\tau \left( V,  \Phi''V \right)_{L^2_{\beta}}-\tau \left( \Lambda_{\mathbb S^n_+} V,  \Phi''V \right)_{L^2_{\beta}} \\ 
&=:K_5+K_6+K_7+K_8 .
 \end{align*}
 First, we compute integrating by parts in $t$, 
 \begin{align*}
 K_5 &=\tau \int_{(-\infty,\ln R) \times N_+} (\partial_tV)^2  \Phi'' \sqrt{|\beta|}+E_5
 \end{align*}
 where 
 \begin{align*}
 E_5&=\tau \int_{(-\infty,\ln R) \times N_+} (\partial_tV) V \Phi'' \partial_t \left(\sqrt{|\beta|}\right) \\
& \quad  -\frac12\tau \int_{(-\infty,\ln R) \times N_+} V^2 \Phi''' \partial_t \left(\sqrt{|\beta|}\right) \\ & \quad -\frac12\tau \int_{(-\infty,\ln R) \times N_+} V^2 \Phi^{(4)} \sqrt{|\beta|}  \\
 &=:E_{5,1}+E_{5,2}+E_{5,3} . 
 \end{align*}
Next, we have integrating by parts in $\theta_i$ and using the Neumann condition in \eqref{eq:U.4} 
\begin{align*}
K_8&=\tau  \int_{(-\infty,\ln R) \times N_+} \theta_n^{1-2\alpha}\Phi'' \beta^{ij} \partial_{\theta_i} \left( \theta_n^{1-2\alpha}V \right)\partial_{\theta_j} \left( \theta_n^{1-2\alpha}V \right) \sqrt{|\beta|}  \\ 
&=\tau \int_{(-\infty,\ln R) \times N_+} \Phi'' \theta_n^{1-2\alpha}\left|\widetilde{\nabla}_{\mathbb S^n_+} \left(\theta_n^{\frac{2\alpha-1}2} V\right)\right|_{\beta}^2 \sqrt{|\beta|} .
\end{align*}
 Thus, we deduce gathering the above estimates that 
 \begin{align} 
 \tau &\left\| (\Phi'')^{\frac12}\partial_tV\right\|_{L^2_{\beta}}^2+ \tau\left\| (\Phi'')^{\frac12} \theta_n^{\frac{1-2\alpha}2}\left|\widetilde{\nabla}_{\mathbb S^n_+} \left(\theta_n^{\frac{2\alpha-1}2} V\right)\right|_{\beta}\right\|_{L^2_{\beta}}^2 \nonumber \\ 
 & \le \tau \left| \left( \mathcal{S}V, \Phi''V \right)_{L^2_{\beta}} \right|+\tau^3  \left\| \Phi'(\Phi'')^{\frac12}V\right\|_{L^2_{\beta}}^2+\left|E_5\right| \label{Carleman:proof.4a} \\ 
 & \le \left\| \mathcal{S}V \right\|_{L^2_{\beta}}^2+\frac14 \tau^2\left\| \Phi''V \right\|_{L^2_{\beta}}^2+\tau^3  \left\| \Phi'(\Phi'')^{\frac12}V\right\|_{L^2_{\beta}}^2+\left|E_5\right| . \nonumber
 \end{align}

Therefore, we conclude by combining \eqref{Carleman:proof.1} and \eqref{Carleman:proof.4a} and using \eqref{prop:Phi'}-\eqref{prop:Phi''} and $\tau \ge \tau_0 \ge 1$ that
 \begin{align}
\left\| F \right\|_{L^2_{\beta}}^2 &\ge \left\| \mathcal{A}V \right\|_{L^2_{\beta}}^2
+\frac32\tau^{3} \left\| (\Phi'')^{\frac12}V\right\|_{L^2_{\beta}}^2  +5\tau \left\| (\Phi'')^{\frac12}\partial_tV\right\|_{L^2_{\beta}}^2\nonumber \\ &\quad +\tau\left\| (\Phi'')^{\frac12} \theta_n^{\frac{1-2\alpha}2}\left|\widetilde{\nabla}_{\mathbb S^n_+} \left(\theta_n^{\frac{2\alpha-1}2} V\right)\right|_{\beta}\right\|_{L^2_{\beta}}^2  -\sum_{j=1}^5 \left| E_j\right| . \label{Carleman:proof.5}
\end{align} 

 \medskip
 \noindent \emph{Step $6$: Control of the lower order terms.} We estimate the error terms $\sum_{j=1}^5 \left| E_j \right|$. First, observe from \eqref{prop:Phi3-4} that, for $\tau \ge \tau_0$ large enough,
 \begin{equation*}
 \left|E_{2,4} \right|+ \left|E_{5,3} \right| \le C \tau  \left\| (\Phi'')^{\frac12}V\right\|_{L^2_{\beta}}^2 \le 2^{-2}\tau^3\left\| (\Phi'')^{\frac12}V\right\|_{L^2_{\beta}}^2 .
 \end{equation*}
  Next, from \eqref{bound_conformal_metric.1}-\eqref{bound_conformal_metric.2}, there exists $C=C(a)>1$ and $0<R_0=R_0(a)<1$ such that, for all $t \le \ln R_0$,
  \begin{equation*}
 \partial_t\left( \sqrt{|\beta|} \right) \le C e^{2t} \sqrt{|\beta|} \le 2^{-20} \Phi''(t) .
  \end{equation*}
 This bound combined to \eqref{prop:Phi'}-\eqref{prop:Phi3-4} yields for $t \le \ln R_0$ and $\tau \ge \tau_0$ large enough, 
 \begin{align*}
 \left|E_{1} \right|+ \left|E_{2,3} \right|+ \left|E_{3} \right|+\left|E_{5,2} \right| & \le 2^{-3}\tau^3\left\| (\Phi'')^{\frac12}V\right\|_{L^2_{\beta}}^2 , \\ 
  \left|E_{2,1} \right| & \le 2^{-3}\tau\left\| (\Phi'')^{\frac12}\partial_tV\right\|_{L^2_{\beta}}^2 , \\
   \left|E_{2,2} \right| +\left|E_{5,1} \right| & \le 2^{-4}\tau\left\| (\Phi'')^{\frac12}\partial_tV\right\|_{L^2_{\beta}}^2+2^{-4}\tau\left\| (\Phi'')^{\frac12}V\right\|_{L^2_{\beta}}^2 .
 \end{align*}
  Finally, by deriving the identity $\beta^{ij}\beta_{jk}=\delta_{ik}$, we find that 
  \begin{equation*}
  \partial_t\left( \beta^{ij} \right)=-\beta^{ij} \partial_t \left( \beta_{jk}\right) \beta^{kj} .
  \end{equation*}
  Thus, it follows from \eqref{bound_conformal_metric.1} that, for $t \le \ln R_0$ (where $R_0=R_0(a)$ is small enough), 
  \begin{equation*}
   \left|E_4 \right| \le \frac12 \tau\left\| (\Phi'')^{\frac12} \theta_n^{\frac{1-2\alpha}2}\left|\widetilde{\nabla}_{\mathbb S^n_+} \left(\theta_n^{\frac{2\alpha-1}2} V\right)\right|_{\beta}\right\|_{L^2_{\beta}}^2 .
  \end{equation*}
  
  Therefore, we conclude combining these estimates with \eqref{Carleman:proof.5} that 
  \begin{align}
\left\| F \right\|_{L^2_{\beta}}^2 &\ge \left\| \mathcal{A}V \right\|_{L^2_{\beta}}^2
+\tau^{3} \left\| (\Phi'')^{\frac12}V\right\|_{L^2_{\beta}}^2  +4\tau \left\| (\Phi'')^{\frac12}\partial_tV\right\|_{L^2_{\beta}}^2\nonumber \\ &\quad +\frac12\tau\left\| (\Phi'')^{\frac12} \theta_n^{\frac{1-2\alpha}2}\left|\widetilde{\nabla}_{\mathbb S^n_+} \left(\theta_n^{\frac{2\alpha-1}2} V\right)\right|_{\beta}\right\|_{L^2_{\beta}}^2  . \label{Carleman:proof.6}
\end{align} 
This provides the bounds for the first three terms on the right-hand side of \eqref{Carleman.3}.

  \medskip
 \noindent \emph{Step $7$: Estimate for the last term in \eqref{Carleman.3}}.
 The proof of this estimate uses of the contribution of $\|\mathcal{A}V\|_{L^2_{\beta}}$ in \eqref{Carleman:proof.6}. Recall that $U$ is supported in the half geodesic annulus $\mathcal{A}^+_{\delta,R}$ with  $\kappa_0\delta<R$ and that $1<\kappa<\kappa_0$. 
 
 On the one hand, we have from the fundamental theorem of calculus, for all $t \in (\ln(\delta),\ln(\kappa \delta))$ and  $\Theta \in \mathbb S^n_+$,
\[V(t,\Theta) \le (t-\ln (\delta))^{\frac12}\left( \int_{\ln(\delta)}^t(\partial_tV)^2(\tilde{t},\theta)d\tilde{t}\right)^{\frac12}\]
so that
\begin{equation} \label{Carleman.4}
\int_{\ln(\delta)}^{\ln(\kappa \delta)}V^2(t,\theta)e^{2t}dt \lesssim \delta^2 \int_{\ln(\delta)}^{\ln(\kappa \delta)}(\partial_tV)^2(t,\theta)dt .
\end{equation}
On the other hand, it follows from the definition of $\mathcal{A}$ and the triangle inequality that 
\begin{equation} \label{Carleman.5}
\|\mathcal{A}V\|_{L^2_{\beta}} \geq \tau \|\Phi' \partial_tV \|_{L^2_{\beta}}-\tau \|\Phi'' V \|_{L^2_{\beta}} .
\end{equation}
While the second term on the right-hand side of \eqref{Carleman.5} can be absorbed by the second term on the right-hand side of \eqref{Carleman:proof.6} by choosing $\tau$ large enough, we use that $\Phi' \sim 1$ and \eqref{Carleman.4} to bound by below the first term on the right-hand side of \eqref{Carleman.5}. We obtain 
\begin{equation*}
\tau \|\Phi' \partial_tV \|_{L^2_{\beta}((\ln\delta,\ln (c\delta))\times \mathbb S^n_+)}  \gtrsim \tau \delta^{-1}\left\| e^{t} V\right\|_{L^2_{\beta}((\ln \delta,\ln (c\delta))\times \mathbb S^n_+)} .
\end{equation*} 
These estimates together with \eqref{Carleman:proof.6} yield the bound for the last term on the right-hand side of \eqref{Carleman.3}.
\end{proof}

\subsection{Doubling estimate.} From the Carleman estimate, we prove that solutions of \eqref{eq:U.3} satisfy the doubling property. 

\begin{proposition} [Doubling property] \label{prop:doubling} Let $\alpha \in (0,1)$. There exists $R_1=R_1(a)$ with $0<R_1<R_0$ such that, for any $0<\widetilde{R}<R_1$, the following holds. Assume that $U \in H^1(\mathcal{B}^+_{\widetilde{R}},y^{1-2\alpha}d{\bf x})$ 
is a solution of \eqref{eq:U.2} in $\mathcal{B}^+_{\widetilde{R}}$ with  $U \equiv 0$ on $\mathcal{B}^0_{\widetilde{R}} $. Then, there exists $C=C(a,U)>1$ such that, for any $0<R<\widetilde{R}/2$, 
\begin{equation} \label{doubling.1}
\|y^{\frac{1-2\alpha}2}U\|_{L^2(\mathcal{B}_{2R}^+)} \le C \|y^{\frac{1-2\alpha}2}U\|_{L^2(\mathcal{B}_R^+)} .
\end{equation}
\end{proposition}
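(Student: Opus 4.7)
My plan is to deduce the doubling estimate \eqref{doubling.1} from R\"uland's Carleman estimate in Proposition \ref{prop:Carleman}, through a three-ball inequality of Hadamard type, following the strategy employed in \cite{Ru1,RulandTAMS2016,GhSaUh}. Fix $0<R<\widetilde R/2$ and a small auxiliary parameter $\rho\in(0,R/4)$ to be chosen. I first introduce a smooth radial cut-off $\eta=\eta(r)$, with $r=1_{a^{-1}}(\mathbf{x})$, supported in $[\rho,2R]$, identically $1$ on $[2\rho,R]$, and satisfying $|\eta^{(k)}|\lesssim\rho^{-k}$ on the inner corona and $|\eta^{(k)}|\lesssim R^{-k}$ on the outer corona. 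Since $r$ is smooth away from the origin, and since $U(x,0)\equiv 0$ on $\mathcal{B}^0_{\widetilde R}$ together with $\alpha\in(0,1)$ forces $y^{1-2\alpha}\eta'(r)\partial_y r\cdot U\to 0$ as $y\downarrow 0$, the function $\widetilde U=\eta U$ inherits from $U$ the homogeneous Neumann condition on $\Gamma^0_{2R}$, and Proposition \ref{prop:Carleman} applies to $\widetilde U$ with $\delta=\rho$.

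The function $\widetilde U$ satisfies the equation \eqref{eq:U.3} with right-hand side $\mathcal{E}=[L_\alpha,\eta]U$, where $L_\alpha$ denotes the degenerate elliptic operator on the left-hand side of \eqref{eq:U.2}. This commutator is supported in the two annular regions $\mathcal{A}^+_{\rho,2\rho}$ and $\mathcal{A}^+_{R,2R}$, and is bounded pointwise by $y^{1-2\alpha}(\rho^{-1}|\nabla U|+\rho^{-2}|U|)$ on the inner corona and $y^{1-2\alpha}(R^{-1}|\nabla U|+R^{-2}|U|)$ on the outer corona. Restricting the left-hand side of the Carleman estimate to an intermediate annulus $r\in[R/4,R/2]$ (where $\eta\equiv 1$ and $\phi(r)$ is comparable to $-\ln R$), and using the monotonicity of $\phi$ to extract the weight at the extremal radii on the right, I arrive at a schematic inequality
\begin{equation*}
e^{\tau\phi(R/2)}\,\mathcal{N}_{\mathrm{mid}}\;\lesssim\;e^{\tau\phi(\rho)}\,\mathcal{N}_{\mathrm{in}}+e^{\tau\phi(R)}\,\mathcal{N}_{\mathrm{out}},
\end{equation*}
where each $\mathcal{N}$ denotes a weighted $L^2$ norm of $U$ and its gradient on the corresponding annulus. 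The gradient contributions are absorbed into $L^2$-norms on slightly enlarged balls by the Caccioppoli bound of Lemma \ref{lemm:Cacciopoli_bis}, together with a half-ball Neumann variant obtained by the same variational argument applied to test functions crossing $\{y=0\}$ flatly.

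After this absorption, the inequality reads
\begin{equation*}
\|y^{(1-2\alpha)/2}U\|_{L^2(\mathcal{B}^+_{R/2})}\;\lesssim\;e^{\tau A}\|y^{(1-2\alpha)/2}U\|_{L^2(\mathcal{B}^+_{4\rho})}+e^{-\tau B}\|y^{(1-2\alpha)/2}U\|_{L^2(\mathcal{B}^+_{4R})},
\end{equation*}
with $A=\phi(\rho)-\phi(R/2)>0$ and $B=\phi(R/2)-\phi(R)>0$ explicitly given by \eqref{def:phi}. Optimising over $\tau\ge\tau_0$ by Young's inequality transforms this into a three-ball inequality
\begin{equation*}
\|y^{(1-2\alpha)/2}U\|_{L^2(\mathcal{B}^+_{R/2})}\le C\,\|y^{(1-2\alpha)/2}U\|_{L^2(\mathcal{B}^+_{4\rho})}^{\theta}\,\|y^{(1-2\alpha)/2}U\|_{L^2(\mathcal{B}^+_{4R})}^{1-\theta},
\end{equation*}
with $\theta=B/(A+B)\in(0,1)$ depending only on the ratio $R/\rho$ and on $a$. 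Choosing $\rho$ proportional to $R$ and dyadically chaining this three-ball inequality across scales, together with the uniform global bound $\|y^{(1-2\alpha)/2}U\|_{L^2(\mathcal{B}^+_{\widetilde R})}<\infty$, yields the doubling estimate \eqref{doubling.1} with $C=C(a,U)$.

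The principal obstacle I anticipate is the quantitative handling of the inner commutator term on the thin shell $\mathcal{A}^+_{\rho,2\rho}$: the first-order contribution $\rho^{-1}y^{1-2\alpha}|\nabla U|$ must be traded for a zeroth-order norm on a slightly enlarged half-ball, which requires a Caccioppoli estimate valid up to the Neumann boundary $\{y=0\}$ (a modification of Lemma \ref{lemm:Cacciopoli_bis}, since that lemma covers only interior balls away from $\{y=0\}$). Equally delicate is verifying, with care about the dependence of $r=1_{a^{-1}}(\mathbf{x})$ on $y$, that the cut-off $\widetilde U=\eta(r)U$ preserves the Neumann condition (as sketched above, using $U|_{y=0}\equiv 0$ in $\mathcal{B}^0_{\widetilde R}$), and tracking the $R$-uniformity of the constants in the three-ball inequality in order to iterate it on a range of small scales without loss.
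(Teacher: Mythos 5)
Your overall strategy---cut $U$ off by a radial function supported in an annulus, apply R\"uland's Carleman estimate to $\eta U$, bound the two commutator terms (supported near the inner and outer radii) by Caccioppoli, and conclude---coincides with the paper's. You also correctly flag the need for a Caccioppoli estimate up to the Neumann boundary $\{y=0\}$; the paper supplies exactly this in Lemma~\ref{lemm:Cacciopoli}.

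The gap lies in the last step: passing from your ``optimize over $\tau$'' three-ball inequality to the doubling estimate by ``dyadically chaining.'' With $\phi$ convex in $t=\ln r$ (which is what $\Phi''>0$ says), the exponent $\theta=B/(A+B)$ you get from Young's inequality with middle scale at the geometric mean of inner and outer satisfies $\theta\le 1/2$. In that regime the recursion coming from the three-ball inequality for the dyadic doubling index $d_k=m(\mu^{-k}\widetilde R)-m(\mu^{-(k+1)}\widetilde R)$ reads $d_k \le \frac{\ln C}\theta + \frac{1-\theta}\theta\, d_{k-1}$ with amplification factor $\frac{1-\theta}\theta\ge 1$, so it degenerates as the number of steps $k\to\infty$ (equivalently as $R\to 0$) and gives no scale-independent bound. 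Pushing the middle scale close to the inner scale does make $\theta>1/2$, but then $1-\theta\sim 1/\ln(\widetilde R/\rho)$ multiplies a quantity $m(\widetilde R)-m(\rho)$ that already encodes the total doubling you are trying to control, so the estimate becomes circular. Three-ball inequalities from a convex Carleman weight do not, by themselves, imply uniform doubling---this is precisely why one normally supplements them with almost-monotonicity of an Almgren-type frequency function.

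The paper avoids the iteration entirely by exploiting the additional boundary term $\tau\delta^{-1}\| e^{\tau\phi} y^{(1-2\alpha)/2}U\|_{L^2(\mathcal{A}^+_{\delta,\kappa\delta})}$ on the left-hand side of the Carleman estimate \eqref{Carleman.1} (the term the authors added following R\"uland's Remark~4). After applying the Carleman estimate to $\eta U$ and multiplying through by $\delta$, the factor $\delta^{-1}$ in this extra term cancels the $\delta^{-1}$ produced by Caccioppoli on the \emph{inner} commutator, so the $\tau$--weights on both sides of the inner-scale terms differ only by the fixed factor $e^{\tau(\phi(\delta/2)-\phi(3\delta))}\sim e^{c\tau}$, uniformly in $\delta$. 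The outer-scale contribution is then absorbed into the first Carleman left-hand-side term localized to $\mathcal{A}^+_{\widetilde R/4,\widetilde R/2}$ by choosing $\tau$ \emph{once}, depending only on the ratio $\|y^{(1-2\alpha)/2}U\|_{L^2(\mathcal{B}^+_{3\widetilde R})}/\|y^{(1-2\alpha)/2}U\|_{L^2(\mathcal{A}^+_{\widetilde R/4,\widetilde R/2})}$, i.e.\ on $U$ but not on $\delta$. One reads off \eqref{doubling.1} directly, with $C=e^{c\tau}=C(a,U)$. If you wish to pursue your route, you must either prove and invoke the frequency-function monotonicity separately, or re-derive and use the $\tau\delta^{-1}$-boundary term of Proposition~\ref{prop:Carleman} as in the text.
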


In the proof of Proposition \ref{prop:doubling}, we need the following elliptic gradient estimate, which follows from (2.3.2) in \cite{FKS}. 
\begin{lemma}[Cacciopoli estimate] \label{lemm:Cacciopoli}
Let $\alpha \in (0,1)$ . There exists $R_2=R_2(a)$ such that for $0<R<R_2$ and $0<\kappa<1$, the following holds. Assume that $U \in H^1(\mathcal{B}^+_{ R},y^{1-2\alpha}d{\bf x})$ is a solution of \eqref{eq:U.2} satisfying $U \equiv 0$ on $\mathcal{B}^0_{R}$. Then, there exists $C=C(a)>0$ such that 
\begin{equation} \label{Caccipolli:est.1}
\left\|  y^{\frac{1-2\alpha}2}\nabla_{{\bf x}}U\right\|_{L^2(\mathcal{B}^+_{\kappa R})} \le \left(\frac{C}{(1-\kappa)R}+\|c\|_{L^{\infty}} \right)\left\|  y^{\frac{1-2\alpha}2} U\right\|_{L^2(\mathcal{B}^+_{ R})} .
\end{equation}
\end{lemma}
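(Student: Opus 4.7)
The plan is to execute the standard weighted Cacciopoli test-function argument, with the degenerate weight $y^{1-2\alpha}$ treated purely as a Muckenhoupt $A_2$ multiplier that does not obstruct any of the algebra.

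First, I would pick a smooth radial cutoff $\eta \in C_c^\infty(\mathcal{B}_R^+)$ with $\eta \equiv 1$ on $\mathcal{B}_{\kappa R}^+$, $0 \le \eta \le 1$, and $|\nabla_{\bf x}\eta| \le C/((1-\kappa)R)$ for some $C=C(a)$; such a cutoff exists because the geodesic distance $1_{a^{-1}}$ is comparable to the Euclidean distance with constants depending only on the ellipticity and $L^\infty$ bound of $a$ (and $1_{a^{-1}}$ is smooth away from the origin by the $C^2$ assumption on $a$). The product $\xi = \eta^2 U$ then belongs to $H^1(\mathcal{B}_R^+, y^{1-2\alpha} d{\bf x})$ and vanishes in a neighborhood of the curved part $\Gamma_R^+$, so $\xi$ is an admissible test function in Definition \ref{def:weak_sol}.

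Next, I would substitute $\xi = \eta^2 U$ into the weak formulation of \eqref{eq:U.2}. Expanding the derivatives of $\eta^2 U$ produces a ``good'' quadratic term $\int y^{1-2\alpha} \eta^2 \bigl( (\partial_y U)^2 + a(x)\nabla_x U \cdot \nabla_x U \bigr) d{\bf x}$ plus a nonnegative potential contribution $\int y^{1-2\alpha} c(x) \eta^2 U^2$, balanced against a cross term of the form $2 \int y^{1-2\alpha} \eta U \bigl( \partial_y \eta\, \partial_y U + \nabla_x \eta \cdot a(x) \nabla_x U \bigr) d{\bf x}$. Applying the ellipticity of $a$ from below and Young's inequality $2\eta |U| |\nabla\eta| |\nabla_{\bf x} U| \le \tfrac{\lambda}{2} \eta^2 |\nabla_{\bf x} U|^2 + C_\lambda |\nabla\eta|^2 U^2$ to the cross term, half of the gradient integral is absorbed back into the left-hand side. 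The surviving inequality reads schematically
\[
\int y^{1-2\alpha} \eta^2 |\nabla_{\bf x} U|^2 \, d{\bf x} \lesssim \int y^{1-2\alpha} |\nabla\eta|^2 U^2 \, d{\bf x} + \|c\|_{L^\infty} \int y^{1-2\alpha} \eta^2 U^2 \, d{\bf x},
\]
and inserting $|\nabla\eta| \le C/((1-\kappa)R)$ together with $\eta \le 1$ and taking square roots delivers \eqref{Caccipolli:est.1}.

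The only mildly delicate point is the boundary behavior at $\{y=0\}$: integrating $\eta^2 U\, \partial_y(y^{1-2\alpha} \partial_y U)$ by parts in $y$ yields a trace $\lim_{y \downarrow 0} \int \eta^2 U\, y^{1-2\alpha} \partial_y U \, dx$ that must vanish. This is guaranteed by the weak Neumann condition already encoded in \eqref{eq:U.2} (and is additionally ensured by the hypothesis $U \equiv 0$ on $\mathcal{B}_R^0$, which in fact makes $\eta^2 U$ vanish on the flat part of the boundary). No other obstacle arises: the computation is structurally identical to the classical uniformly elliptic Cacciopoli estimate, and the $A_2$ weight $y^{1-2\alpha}$ plays no active role beyond being preserved throughout.
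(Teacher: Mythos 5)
Your proof is correct and follows essentially the same argument as the paper: multiply the equation by $\eta^2 U$ (the paper's $\psi^2 U$), integrate by parts using the divergence theorem, use the ellipticity of $a$ from below and Cauchy--Schwarz/Young to absorb the gradient cross term, and observe that the boundary contributions vanish thanks to the Neumann condition and $U\equiv 0$ on $\mathcal{B}^0_R$. Your extra remark that $y^{1-2\alpha}$ is an $A_2$ weight is a valid but unnecessary observation here, since (as you note) it plays no active role in the integration-by-parts computation.
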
 

\begin{proof}
Let $\psi=\psi_{\kappa,R}$ be a smooth radial cut-off function such that 
\begin{equation} \label{def:psi}
0 \le \psi \le 1, \quad \text{supp} \, \psi \subset \overline{\mathcal{B}^+_{\frac{(\kappa+1)R}2}} \quad \text{and} \quad \psi \equiv 1 \ \text{on} \ \mathcal{B}^+_{\kappa R} .
\end{equation} 
By abuse of notation, we denote $\psi({\bf x})=\psi(r)$ with $r=1_{a^{-1}}({\bf x})$. Then, there exist $C=C(a)>0$ and $R_2=R_2(a)>0$ such that, for $0<R<R_2(a)$,
\begin{equation} \label{prop:psi}
|\nabla_{{\bf x}} \psi(r)| \le C \left((1-\kappa)R\right)^{-1}, \ \text{for} \ r \in (\kappa R,R) .
\end{equation} 
Observe that the equation in \eqref{eq:U.2} can be rewritten as 
\begin{equation} \label{eq:U.2.div}
\nabla_{{\bf x}} \cdot (y^{1-2\alpha}\widetilde{a}(x)\nabla_{{\bf x}}U)=0 ,
\end{equation}
where ${\bf x}=(x,y)$ and  
$\widetilde{a}(x)= 
\begin{pmatrix}  
 a(x) & 0 \\ 
 0 & 1
 \end{pmatrix} .
$ It follows by multiplying \eqref{eq:U.2.div} by $\psi^2 U$ and using the divergence theorem, (recall here that $U \equiv 0$ on $\mathcal{B}^+_{R} \cap \{y=0\}$), that
\begin{align*} 
\int_{\mathcal{B}^+_{R} }\psi^2 y^{1-2\alpha}\widetilde{a}(x)\nabla_{{\bf x}}U \cdot \nabla_{{\bf x}}U    d{\bf x}&=
-2\int_{\mathcal{B}^+_{R} }\psi U y^{1-2\alpha}\widetilde{a}(x)\nabla_{{\bf x}}U \cdot \nabla_{{\bf x}}\psi    d{\bf x} \\ & \quad 
-\int_{\mathcal{B}^+_{R} }\psi^2 y^{1-2\alpha}c(x)U^2 d{\bf x} .
\end{align*}

Thus, we deduce from the ellipticity assumption on $a=(a_{jk})$ and the Cauchy-Schwarz inequality,
\begin{align*} 
(1+\lambda)&\int_{\mathcal{B}^+_{R} } \psi ^2 y^{1-2\alpha} \left|\nabla_{{\bf x}}U \right|^2    d{\bf x}  
\\ &\le 2(1+\Lambda) \left(\int_{\mathcal{B}^+_{R} }\psi^2 y^{1-2\alpha}\left|\nabla_{{\bf x}}U\right|^2   d{\bf x}\right)^{\frac12}\left(\int_{\mathcal{B}^+_{R} }\ y^{1-2\alpha} U^2 \left|\nabla_{{\bf x}}\psi\right|^2   d{\bf x}\right)^{\frac12} \\ & \quad +\|c\|_{L^{\infty}}\int_{\mathcal{B}^+_{R} }\ y^{1-2\alpha} U^2    d{\bf x}
\end{align*}
This concludes the proof of \eqref{Caccipolli:est.1} by using the properties of $\psi$ in \eqref{def:psi}-\eqref{prop:psi}. 
\end{proof} 

\begin{proof}[Proof of Proposition \ref{prop:doubling}]
Let $0<\delta<1$ and $\widetilde{R}>0$ satisfy $0<3\delta<\widetilde{R}<\frac{R_0}3$ and let $\eta_{\delta,\widetilde{R}}=\eta$ be a smooth radial cut-off function such that 
\begin{equation} \label{def:eta}
0 \le \eta \le 1, \quad \text{supp} \, \eta \subset \overline{\mathcal{A}^+_{\frac{\delta}2,2\widetilde{R}}} \quad \text{and} \quad \eta \equiv 1 \ \text{on} \ \mathcal{A}^+_{\delta,\widetilde{R}} .
\end{equation} 
By abuse of notation, we denote $\eta({\bf x})=\eta(r)$ with $r=1_{a^{-1}}({\bf x})$. Then, there exist $C=C(a)>0$ and $R_3=R_3(a)>0$ such that, for $0<R<R_3(a)$,
\begin{equation} \label{prop:eta}
|\nabla_{{\bf x}} \eta(r)| \le C \delta^{-1}, \ \text{for} \ r \in (\frac{\delta}2,\delta) \quad \text{and} \quad |\nabla_{{\bf x}} \eta(r)| \le C \widetilde{R}^{-1}  \ \text{for} \ r \in (\widetilde{R},2\widetilde{R}) .
\end{equation} 
Hence, $\widetilde{U}=\eta U$ is a solution of \eqref{eq:U.3} with $\widetilde{U}=0$ on $\mathcal{B}_R^0$, and
\begin{align*} 
f&= \partial_y(y^{1-2\alpha}\partial_y(\eta U))+\nabla_{x} \cdot (y^{1-2\alpha}a(x)\nabla_{x}(\eta U))-y^{1-2\alpha}c(x)\eta U  \\ 
&=y^{1-2\alpha} \left( \partial_y \eta \partial_yU+ a(x)\nabla_x \eta \cdot \nabla_x U\right)\\
&+ \partial_y \left( y^{1-2\alpha}U \partial_y \eta \right)+ \nabla_x \cdot (y^{1-2\alpha}a(x)U \nabla_x \eta) . \label{def:f}
\end{align*} 
From the Carleman estimate \eqref{Carleman.1}, we have, for $\tau \ge \tau_0$,
\begin{align*}  
 \tau \delta^{-1} \left\| e^{\tau \phi} y^{\frac{1-2\alpha}2} U\right\|_{L^2(\mathcal{A}_{\delta,3\delta}^+)}  + \tau^{\frac32} &\left\| e^{\tau \phi} (1+\ln^2(r))^{-\frac12}r^{-1}y^{\frac{1-2\alpha}2}U\right\|_{L^2(\mathcal{A}_{\widetilde{R}/4,\widetilde{R}/2}^+)} \\ &
 \lesssim   \left\| e^{\tau \phi} r y^{\frac{2\alpha-1}2}f\right\|_{L^2(\mathcal{A}^+_{\delta/2,2\widetilde{R}})} ,
 \end{align*}
 so that it follows from the properties of $\eta$ in \eqref{def:eta}-\eqref{prop:eta},
 \begin{align*} 
 \tau \delta^{-1}& \left\| e^{\tau \phi} y^{\frac{1-2\alpha}2} U\right\|_{L^2(\mathcal{A}_{\delta,3\delta}^+)} +
  \tau^{\frac32} \left\| e^{\tau \phi} (1+\ln^2(r))^{-\frac12}r^{-1}y^{\frac{1-2\alpha}2}U\right\|_{L^2(\mathcal{A}_{\widetilde{R}/4,\widetilde{R}/2}^+)}
  \\ & \lesssim  \left( \left\| e^{\tau \phi}  y^{\frac{1-2\alpha}2}\nabla_{{\bf x}}U\right\|_{L^2(\mathcal{A}^+_{\delta/2,\delta})}
 +\delta^{-1}\left\| e^{\tau \phi}  y^{\frac{1-2\alpha}2}U\right\|_{L^2(\mathcal{A}^+_{\delta/2,\delta})} \right) \\ & 
\quad + \left( \left\| e^{\tau \phi}  y^{\frac{1-2\alpha}2}\nabla_{{\bf x}}U\right\|_{L^2(\mathcal{A}^+_{\widetilde{R},2\widetilde{R}})}
 +\widetilde{R}^{-1}\left\| e^{\tau \phi}  y^{\frac{1-2\alpha}2}U\right\|_{L^2(\mathcal{A}^+_{\widetilde{R},2\widetilde{R}})} \right) .
 \end{align*}
 Multiplying by $\delta$, recalling that $\phi$ is a decreasing function and $\tau \ge 1$ and then using the gradient elliptic estimate \eqref{Caccipolli:est.1}, we deduce that 
 \begin{align*}
 e^{\tau \phi(3\delta)} &\left\| y^{\frac{1-2\alpha}2} U\right\|_{L^2(\mathcal{A}_{\delta,3\delta}^+)} +\delta e^{\tau \phi(\widetilde{R}/2)}\widetilde{R}^{-1} (1+\ln^2(\widetilde{R}))^{-\frac12}\left\| y^{\frac{1-2\alpha}2}U\right\|_{L^2(\mathcal{A}_{\widetilde{R}/4,\widetilde{R}/2}^+)}
 \\ & \lesssim e^{\tau \phi(\delta/2)}\left( \delta \left\|  y^{\frac{1-2\alpha}2}\nabla_{{\bf x}}U\right\|_{L^2(\mathcal{A}^+_{\delta/2,\delta})}
 +\left\|   y^{\frac{1-2\alpha}2}U\right\|_{L^2(\mathcal{A}^+_{\delta/2,\delta})} \right) \\ 
 &\quad +e^{\tau \phi(R)}\left( \delta \left\|   y^{\frac{1-2\alpha}2}\nabla_{{\bf x}}U\right\|_{L^2(\mathcal{A}^+_{R,2R})}
 +\delta \widetilde{R}^{-1}\left\| y^{\frac{1-2\alpha}2}U\right\|_{L^2(\mathcal{A}^+_{\widetilde{R},2\widetilde{R}})} \right) \\ 
 & \quad \lesssim e^{\tau \phi(\delta/2)}\left\|  y^{\frac{1-2\alpha}2}U\right\|_{L^2(\mathcal{B}^+_{\frac{3\delta}2})}+\delta e^{\tau \phi(\widetilde{R})}\widetilde{R}^{-1}\left\| y^{\frac{1-2\alpha}2}U\right\|_{L^2(\mathcal{B}^+_{3\widetilde{R}})} .
 \end{align*}
 Now, we choose $\tau$ large enough, and independent of $\delta$, such that the last term on the right-hand side of the above inequality can be absorbed by the second term on the left-hand side. For example, $\tau$ can be chosen on the form
 \begin{equation*} 
 \tau \sim \ln \left(\left\| y^{\frac{1-2\alpha}2}U\right\|_{L^2(\mathcal{B}^+_{3\widetilde{R}})}\right)-\ln \left(\left\| y^{\frac{1-2\alpha}2}U\right\|_{L^2(\mathcal{A}_{\widetilde{R}/4,\widetilde{R}/2}^+)} \right) .
 \end{equation*}
 Thus, we conclude observing that $\phi(\delta/2)-\phi(3\delta) \sim 1$ that 
  \begin{align*}
 \left\| y^{\frac{1-2\alpha}2} U\right\|_{L^2(\mathcal{A}_{\delta,3\delta}^+)} \lesssim \left\|  y^{\frac{1-2\alpha}2}U\right\|_{L^2(\mathcal{B}^+_{\frac{3\delta}2})} ,
 \end{align*}
 where the implicit constant does not depend on $\delta$ but possibly on $U$ through the choice of $\tau$. 
 Therefore, we conclude the proof of \eqref{doubling.1} by adding $\left\|  y^{\frac{1-2\alpha}2}U\right\|_{L^2(\mathcal{B}^+_{\delta})}$ on both sides of the inequality and taking $R=\frac{3\delta}2$.
\end{proof}

\subsection{Proof of Proposition \ref{WUCP:ext}} We start by recalling the following observation about the metric $1 \times a^{-1}$ on $\mathbb R^{n+1}_+$, introduced in the sub-section Caleman estimats, which is a consequence of the ellipticity of $a$ and its $C^2$-smoothness (see hypothesis of Theorem \ref{A3}). For $R>0$, let $B_R^+$ be the euclidian half-ball defined in \eqref{euc:ball} and $\mathcal{B}_R^+$ be the geodesic half-ball defined in \eqref{geo:ball}. Then, there exist $R_3=R_3(a)>0$ and $0<\alpha_0<\alpha_1$ such that for $0<r<R_3$, we have 
\begin{equation} \label{ineq:ball}
B_{\alpha_0 r}^+ \subset \mathcal{B}_r^+ \subset B_{\alpha_1 r}^+ . 
\end{equation} 
Assume now that $\omega$ is a non-negative measure and that there exists $D>0$ satisfying $\omega ( \mathcal{B}_{2r}^+) \le D \omega ( \mathcal{B}_{r}^+)$, for $r>0$ small enough. Then, there exist $\widetilde{D}>0$, depending only on $\alpha_0$, $\alpha_1$ and $D$ such that $\omega ( B_{2r}^+) \le \widetilde{D} \omega ( B_{r}^+)$, for $r>0$ small enough. As a consequence of this fact and Proposition \ref{prop:doubling}, there exists $R_4=R_4(a)>0$ such that, for all $0<R<R_4$,
\begin{equation} \label{doubling.euclidian}
\|y^{\frac{1-2\alpha}2}U\|_{L^2(B_{2R}^+)} \le C \|y^{\frac{1-2\alpha}2}U\|_{L^2(B_R^+)} .
\end{equation}

Next, let $W_{\sigma}(x,y)=U(\sigma x,\sigma y)$, where $\sigma>0$ is small. For $r_0>0$ small and fixed and for $\sigma>0$ small, observe from \eqref{ineq:ball} and Lemma \ref{lemm:Cacciopoli} that
\begin{align*} 
 \sigma \left\| y^{\frac{1-2\alpha}2} \nabla_{x,y}U \right\|_{L^2(B^+_{\sigma r_0})} 
 & \le\sigma \left\| y^{\frac{1-2\alpha}2} \nabla_{x,y}U \right\|_{L^2(\mathcal{B}^+_{\frac{\sigma r_0}{\alpha_0}})} \\ 
& \le C_{\alpha_0,r_0} \left\| y^{\frac{1-2\alpha}2} U \right\|_{L^2(\mathcal{B}^+_{\frac{2\sigma r_0}{\alpha_0}})} 
\\ &  \le C_{\alpha_0,r_0,r_1} \left\| y^{\frac{1-2\alpha}2} U \right\|_{L^2(B^+_{\frac{2 \alpha_1 \sigma r_0}{\alpha_0}})} \\ & 
\le C_{\alpha_0,r_0,r_1}\left\| y^{\frac{1-2\alpha}2}U \right\|_{L^2(B^+_{\sigma r_0})},
\end{align*}
where we applied \eqref{doubling.euclidian} a finite number of times in the last inequality. Thus, we have proved that
\begin{align*} 
\left\| y^{\frac{1-2\alpha}2} \nabla_{x,y}W_{\sigma} \right\|_{L^2(B^+_{r_0})} & = \sigma^{1-\frac{1-2\alpha}2-\frac{n+1}2}\left\| y^{\frac{1-2\alpha}2} \nabla_{x,y}U \right\|_{L^2(B^+_{\sigma r_0})}  
\\ & \leq C_{\alpha_0,r_0,r_1}  \sigma^{-\frac{1-2\alpha}2-\frac{n+1}2}\left\| y^{\frac{1-2\alpha}2}U \right\|_{L^2(B^+_{\sigma r_0})}
\\ & = C_{\alpha_0,r_0,r_1}\left\| y^{\frac{1-2\alpha}2}W_{\sigma} \right\|_{L^2(B^+_{r_0})},
\end{align*}
for some positive constant $C_{\alpha_0,r_0,r_1}=C_{\alpha_0,r_0,r_1}(a,U)$.

For $\sigma>0$ small enough, we now define 
\begin{equation} \label{def:Usigma}
U_{\sigma}(x,y)= \frac{W_{\sigma}(x,y)}{ \left\|y^{\frac{1-2\alpha}2}W_{\sigma}\right\|_{L^2(B_{r_0}^+)}} , 
\end{equation}
where we assumed by contradiction that $\left\|y^{\frac{1-2\alpha}2}W_{\sigma}\right\|_{L^2(B_{r_0}^+)} \neq 0$, otherwise $U$ would be identically $0$ by using the Weak Unique Continuation Property for elliptic equation with $C^2$ coefficients in the domain $ B^+_{ r_0} \cap \{y>\epsilon\}$, (see \cite{AronJMPA1957,Cordes1956}).
Then, we have, for all $\sigma>0$ small enough, 
\begin{align} \label{bounds:Usigma}
\left\| y^{\frac{1-2\alpha}2} U_{\sigma} \right\|_{L^2(B^+_{r_0})}=1 \quad \text{and} \quad \left\| y^{\frac{1-2\alpha}2} \nabla_{x,y}U_{\sigma} \right\|_{L^2(B^+_{r_0})} \le C .
\end{align}

We show that $U_{\sigma}$ is a weak solution in $B_{r_0}^+$ of 
\begin{equation}
 \label{eq:U.10}
 \begin{aligned}
 \begin{cases}
 \partial_y(y^{1-2\alpha}\partial_yU)+\nabla_{x} \cdot (y^{1-2\alpha}a_{\sigma}(x)\nabla_{x}U)-y^{1-2\alpha}c_{\sigma}(x)U=0 & \text{in}  \quad B_{r_0}^+;\\
 \lim_{y\downarrow 0} \,y^{1-2\alpha} \,\partial_yU=0 & \text{on} \quad \Gamma_{r_0}^0 ,
 \end{cases}
 \end{aligned}
 \end{equation}
where $a_{\sigma}(x)=a(\sigma x)$, $c_{\sigma}(x)=\sigma^2 c(\sigma x)$. Indeed, we know that 
\[ \left\| y^{\frac{1-2\alpha}2} \nabla_{x,y}U_{\sigma} \right\|_{L^2(B^+_{r_0})} \le C . \] 
Now, let $\xi \in C^1(\overline{B_{r_0}^+})$ such that $\xi \equiv 0$ on $\Gamma_{r_0}^+$. Then, 
\begin{equation} \label{weak:Usigma}
\begin{split}
&\int_{B_{r_0}^+}y^{1-2\alpha} \left( \partial_yU_{\sigma} \partial_y\xi+a_{\sigma}(x)\nabla_xU_{\sigma} \cdot \nabla_x \xi+c_{\sigma}(x) U \xi\right)dxdy \\ &=
C_{\sigma,n}\int_{B_{\sigma r_0}^+}y^{1-2\alpha} \left(\partial_yU \partial_y\xi_{\sigma}+a(x)\nabla_xU \cdot \nabla_x \xi_{\sigma}+c(x)U \xi_{\sigma}\right)dxdy ,
\end{split}
\end{equation}
where $\xi_{\sigma}(x,y)=\xi(\sigma^{-1}x,\sigma^{-1}y)$. Note that $\xi_{\sigma} \in C^1(\overline{B_{\sigma r_0}^+})$ is such that $\xi_{\sigma} \equiv 0$ on $\Gamma_{\sigma r_0}^+$, and recall that $\sigma r_0<r_0<R$ where $R$ is an in Proposition \ref{WUCP:ext}. Let $\widetilde{\xi}_{\sigma}$ be defined by $\widetilde{\xi}_{\sigma}=\xi_{\sigma}$ on $B_{\sigma r_0}^+$ and $\widetilde{\xi}_{\sigma}=0$ on $B_R^+ \setminus B_{\sigma r_0}^+$. Even though $\widetilde{\xi}_{\sigma}$ need not belong to $C^{1}(\overline{B_R^+})$, it is certainly Lipschitz in $B_R^+$ and a simple regularization argument together with the fact that $U$ is a weak solution of \eqref{eq:U.2} in $B^+_R$, gives now that 
\begin{align*}
\int_{B_{R}^+}y^{1-2\alpha} \left( \partial_yU \partial_y\widetilde{\xi}_{\sigma}+a(x)\nabla_xU \cdot \nabla_x \widetilde{\xi}_{\sigma}+c(x)U \widetilde{\xi}_{\sigma} \right)dxdy =0 .
\end{align*}
Then, we conclude from \eqref{weak:Usigma} that $U_{\sigma}$ is a weak solution of \eqref{eq:U.10} in $B_{r_0}^+$.

Next, we claim that there exist $U_0 \in H^1(B^+_{r_0},y^{1-2\alpha}dxdy)$  and a sequence  $\sigma_k \to 0$ and such that 
\begin{equation} \label{Usigma:conv}
\begin{cases}
 U_{\sigma_k} \underset{k \to +\infty}{\longrightarrow} U_0 \ \text{weakly in} \, H^1(B^+_{r_0},y^{1-2\alpha}dxdy) ;  \\ 
 U_{\sigma_k} \underset{k \to +\infty}{\longrightarrow} U_0 \ \text{strongly in} \, L^2(B^+_{r_0},y^{1-2\alpha}dxdy) .
 \end{cases}
 \end{equation}
Indeed, the weak convergence in \eqref{Usigma:conv} follows directly from the bounds \eqref{bounds:Usigma} and the Banach-Alaoglu theorem. To prove the strong convergence in \eqref{Usigma:conv}, we first observe from the weighted Sobolev embedding Theorem 1.3 in \cite{FKS} (recall that $y^ {1-2\alpha}$ is an $A_2$-weight for $0<\alpha<1$) and the bounds in \eqref{bounds:Usigma} that $\left\| U_{\sigma} \right\|_{L^{2p}(B^+_{r_0},y^{1-2\alpha})} \le C$, for some $p>1$.  Thus, given $\epsilon>0$, there exists $0<r_1<r_0$ such that 
\[ \int_{B_{r_0}^+ \setminus B_{r_1}^+} U_{\sigma}^2 \, y^{1-2\alpha}  dxdy \le C \left( \int_{B_{r_0}^+ \setminus B_{r_1}^+} y^ {1-2\alpha} dxdy \right)^{\frac1{p'}} \le \epsilon .\]
On the other hand, recall that $U_{\sigma}(x,0)=0$ on $\Gamma^0_{r_0}$. Thus, we deduce from the $C^{0,\alpha}$ bound in Theorem 2.4.6  in \cite{FKS} that $\{U_{\sigma}\}$ is equicontinuous in $\overline{B^+_{r_1}}$, so that we conclude by using the Arzela-Ascoli theorem.

We can also assume without loss of generality that $a(0)=I$. Moreover, it follows from the doubling property \eqref{doubling.euclidian} and the $L^{\infty}$ estimate in \cite{FKS} and the $C^{0,\alpha}$ estimate in Theorem 2.4.6 in \cite{FKS} , that, for some $0<\beta<1$ and any $k$,
\begin{equation} \label{bounds:Usigma.2}
\left\| y^{\frac{1-2\alpha}2} U_{\sigma_k} \right\|_{L^2(B^+_{\frac{r_0}2})} \sim 1, \quad \left\|U_{\sigma_k} \right\|_{L^{\infty}(B^+_{\frac{r_0}2})}   \lesssim 1  \quad \text{and} \quad  \left\|U_{\sigma_k} \right\|_{C^{0,\beta}(\overline{B^+_{\frac{r_0}2}})} \lesssim 1 .
\end{equation} 
with all the implicit constants being uniform in $k$. Moreover, since $U  \equiv 0$ on $\Gamma_R^0$, we see that $U_{\sigma_k} \equiv 0$ on $\Gamma_{\frac{r_0}2}^0$. Thus $U_0$ inherits all these properties in the limit. 

Finally, we note that $U_0$ is a weak solution of 
\begin{equation}
 \label{eq:U.11}
 \begin{aligned}
 \begin{cases}
 \partial_y(y^{1-2\alpha}\partial_yU)+\nabla_{x} \cdot (y^{1-2\alpha}\nabla_{x}U)=0, & \text{in}  \quad B_{r_0}^+;\\
 \lim_{y\downarrow 0} \,y^{1-2\alpha} \,\partial_yU=0 & \text{on} \quad \Gamma_{r_0}^0 .
 \end{cases}
 \end{aligned}
 \end{equation}
 Indeed, we already saw that $U_{\sigma_k}$ is a weak solution to \eqref{eq:U.10}. Let $\xi \in C^1(\overline{B_{r_0}^+})$ such that $\xi \equiv 0$ on $\Gamma_{r_0}^+$. Then, it follows that 
 \begin{align*} 
0&
=\int_{B_{r_0}^+}y^{1-2\alpha} \left( \partial_yU_{\sigma_k} \partial_y\xi+a(0)\nabla_xU_{\sigma_k} \cdot \nabla_x \xi\right)dxdy \\ 
& \quad +\int_{B_{r_0}^+}y^{1-2\alpha} \left( \partial_yU_{\sigma_k} \partial_y\xi+\left(a(\sigma_k x)-a(0)\right)\nabla_xU_{\sigma_k} \cdot \nabla_x \xi\right)dxdy  \\ & \quad
+\int_{B_{r_0}^+}y^{1-2\alpha} \sigma_k^2c(\sigma_k x)U_{\sigma_k}  \xi dxdy 
\\ & =I_k+II_k+III_k .
\end{align*}
On the one hand, using the first convergence in \eqref{Usigma:conv}, we see that 
\begin{equation*} 
I_k \underset{k \to +\infty}{\longrightarrow} \int_{B_{r_0}^+}y^{1-2\alpha} \left( \partial_yU_{0} \partial_y\xi+a(0)\nabla_xU_{0} \cdot \nabla_x \xi\right)dxdy .
\end{equation*}
On the other hand, observe from the regularity assumption on $a$ that $a(\sigma_k \cdot) \to a(0)$ and $\sigma_k^2c(\sigma_k \cdot) \to 0$ as $k \to +\infty$ uniformly in $B_{r_0}^+$. Then, we deduce from the uniform boundedness of $U_{\sigma_k}$ in $H^1(B^+_{r_0},y^{1-2\alpha}dxdy)$ that $II_{k} \to 0$ and $III_k \to 0$ as $k \to +\infty$. Thus, recalling  $a(0)=I$, we conclude that $U_0$ is a weak solution of \eqref{eq:U.11} in the ball $B^+_{r_0}$ with $U_0 \equiv 0$ on $\Gamma_{\frac{r_0}2}^0$. 

An argument similar to the one used before for $U$ shows that $U_0$ is also a weak solution of \eqref{eq:U.11} in the ball $B^+_{\frac{r_0}2}$ with $U_0 \equiv 0$ on $\Gamma_{\frac{r_0}2}^0$. Hence, by Proposition 2.2 in \cite{Ru1}, $U_0 \equiv 0$ on $B^+_{\frac{r_0}2}$. This contradicts the fact that $\left\| y^{\frac{1-2\alpha}2} U_0 \right\|_{L^2(B^+_{\frac{r_0}2})} \sim 1$, which finishes the proof.

\vspace{5mm}
\noindent\underline{\bf Acknowledgements.} C.E.K.  was supported by the NSF grant DMS-2153794, D. P. was supported by a Trond Mohn Foundation grant. L. V. was supported in part by MICINN (Spain) projects Severo Ochoa CEX2021-001142, and PID2021-126813NB-I00 (ERDF A way of making Europe).


\end{document}